\newcommand{\ueq}[1][]{%
  \if\relax\detokenize{#1}\relax
    \sbox0{$\underbrace{=}_{}$}%
    \mathrel{\mathmakebox[\wd0]{=}}
  \else
    \mathrel{\underbrace{=}_{\mathclap{#1}}}
  \fi}
\newcommand {\ctn}{\cite}
\newcommand{\bx}{\mathbf x}
\newcommand{\bX}{\mathbf X}
\newcommand{\bY}{\mathbf Y}
\newtheorem{theorem}{Theorem}
\numberwithin{equation}{section}
\numberwithin{algo}{section}
\numberwithin{table}{section}
\numberwithin{figure}{section}
\numberwithin{equation}{section}
\theoremstyle{plain}
\newtheorem{as}{Assumption}
\newtheorem{lem}{Lemma}
\newtheorem{rmk}{Remark}
\begin{document}

\title{\textbf{ Posterior Convergence of Nonparametric Binary and Poisson Regression Under Possible Misspecifications}}
\author{Debashis Chatterjee$^{\dag}$ and Sourabh Bhattacharya$^{\dag, +}$ }
\date{}
\maketitle
\begin{center}
%$^{\dag}$  University of Chicago \\
$^{\ddag}$ Indian Statistical Institute\\
$+$ Corresponding author:  \href{mailto: bhsourabh@gmail.com}{bhsourabh@gmail.com}
%\href{mailto: kshldey@gmail.com}{kshldey@gmail.com}
%,  \href{mailto: bhsourabh@gmail.com}{bhsourabh@gmail.com}\\
\end{center}

\begin{abstract}
	In this article, we investigate posterior convergence of nonparametric binary and Poisson regression 
	under possible model misspecification, assuming general stochastic process prior with appropriate properties. 
	Our model setup and objective for binary regression is similar to that of \ctn{Ghosal06} where the authors have used the 
	approach of entropy bound and exponentially consistent tests with the sieve method to achieve consistency with respect to their Gaussian process prior. 
	In contrast, for both binary and Poisson regression, using general stochastic process prior, our approach involves verification 
	of asymptotic equipartition property along with the method of sieve, which is a manoeuvre of the general results of \ctn{Shalizi09}, useful even 
	for misspecified models. Moreover, we will establish not only posterior consistency but also the rates at which the posterior probabilities converge, 
	which turns out to be the Kullback-Leibler divergence rate. We also investgate the traditional posterior convergence rates.
	Interestingly, from subjective Bayesian viewpoint we will show that the posterior predictive distribution can accurately approximate
	the best possible predictive distribution in the sense that the Hellinger distance, as well
	as the total variation distance between the two distributions can tend to zero, in spite of
	misspecifications.
	\\[2mm]
	{\bf Keywords:} {\it Binary/Poisson regression; Cumulative distribution function; Infinite dimension; Kullback-Leibler divergence rate; Misspecification; 
	Posterior convergence.}
\end{abstract}

\begin{comment}
\begin{keyword}[class=MSC]
\kwd[Primary ]{62G20}
%\kwd{60K35}
\kwd[; secondary ]{62G10}
\end{keyword}

\begin{keyword}
\kwd{Infinite dimension}
\kwd{Gaussian process}
\kwd{ Kullback-Leibler divergence rate}
 \kwd{Gaussian Process}
  \kwd{ Binary regression}
  \kwd{ Misspecification}
  \kwd{ Posterior convergence}
%	\MSC[2010]  \sep  
	%\AMS [2000] subject classifications: 62G07 \sep 
\end{keyword}

\end{comment}

%\end{frontmatter}

\section{Introduction}
The situation for applicability of nonparametric regression is frequently encountered in many practical scenarios where no parametric model fits the data. 
In particular, non-parametric regression for binary dependent variables is very common for various branches of statistics like medical and spatial statistics, whereas 
%parametric Poisson regression model has been used  to model count data and contingency tables. 
nonparametric version of Poisson regression is being used recently in many 
non-trivial scenerios such as for analyzing the likelihood and severity of vehicle crashes (\ctn{Ye18}). 
%For relatively simple situations, 
%parametric models like logistic and others have become quite prominent in binary regression analysis (\ctn{Fin94}), whereas for one continuous concomitant variable, 
%nonparametric methods can be used to support the use of parametric models and methods. Simpler parametric Poisson regression model has vast applicability, for example in traffic safety and cases involving count data. 
Interestingly, despite vast applicability of both the binary as well as Poisson regression, it seems that the available literature on 
nonparametric Poisson regression is scarce in comparison to the available literature on nonparametric binary regression. The Bayesian approach to nonparametric 
binary regression problem has been accounted for in \ctn{Diaconis93}. An account of posterior consistency for Gaussian process prior in nonparametric binary regression 
modeling can be found in \ctn{Ghosal06}, where the authors suggested that similar consistency results should hold for nonparametric Poisson regression model setup. 
Literature on consistency results for nonparametric Poisson regression is very limited. \ctn{Pillai07} have obtained consistency results for 
Poisson regression using an approach similar to that of \ctn{Ghosal06} under certain assumptions, but so far without explicit specifications and detail on prior. 
On the other hand, our approach will be based on results on \ctn{Shalizi09}, which is much different from \ctn{Ghosal06} and capable of handling model misspecification.
Unlike the previous works, the approach of \ctn{Shalizi09} also enables us to investigate the rate at which the posterior converges, which turns out to be the
Kullback-Leibler (KL) divergence rate, and also the traditional posterior convergence rate.
% The importance of Gaussian processes in nonparametric statistical modeling is increasing in recent research. 

In this article, we investigate posterior convergence of nonparametric binary and Poisson regression 
where the nonparametric regression is modeled as some suitable stochastic process. In the binary situation, we consider
a similar setup as that of \ctn{Ghosal06}, where the authors have considered binary observations 
with response probability as an unknown
smooth function of a set of covariates, which was modeled using Gaussian process. Here we will consider a binary response variable $Y$ and a $d$-dimensional 
covariate $x$ belonging to a compact subset. The probability function is given by $p(x)=P(Y=1|X=x)$ along with a prior for $p$ induced by some appropriate stochastic 
process $\eta(x)$ with the relation $p(x)=H\left(\eta(x)\right)$ for a known, non-decreasing and continuously differentiable cumulative distribution function $H(\cdot)$. 
We will establish a posterior convergence theory for nonparametric binary regression under possible misspecifications %(Section \ref{miss}) 
based on the general theory of posterior convergence of \ctn{Shalizi09}. Our theory also includes the case of misspecified models,
that is, if the true regression function is not even supported by the
prior. This approach to Bayesian asymptotics also permits us to show that the relevant posterior probabilities converge
at the KL divergence rate, and that the posterior convergence rate with
respect to KL-divergence is just slower than $\frac{1}{n}$, where $n$ denotes the number of observations. We
further show that even in the case of misspecification, the posterior predictive distribution can
approximate the best possible predictive distribution adequately, in the sense that the Hellinger
distance, as well as the total variation distance between the two distributions can tend to zero. 

For nonparametric Poisson regression, given $x$ in the compact space of covariates, we model the mean function $\lambda(x)$ as $\lambda(x)=H(\eta(x))$, 
where $H$ is a continuously differentiable function. Again, we investigate the general theory of posterior convergence, including misspecifications,
rate of convergence of the posterior distribution and the usual posterior convergence rate, in Shalizi's framework.

The rest of our paper is structured as follows.
In Section \ref{ss1} we provide a brief overview and intuitive explanation of the main assumptions and results of \ctn{Shalizi09} suitable for our approach.
The basic prenises for nonparametric binary and Poisson regression are provided in Sections \ref{binary_model} and \ref{poisson_model}, respectively.
The required assumptions and their discussions are provided in Section \ref{ssAssumption}.
In Section \ref{convergence}, our main results on posterior convergence of binary and Poisson regression are provided, while Section \ref{miss} details the consequences
of misspecifications. Concluding remarks are provided in Section \ref{conclusion}.

The technical details are presented in the Appendix. %Sections \ref{ASZ} and of the Appendix.
Specifically, details of the necessary assumptions and results of \ctn{Shalizi09} are provided in Appendix \ref{ASZ}. The detailed proofs of verification of Shalizi's
assumptions are provided in Appendix \ref{Shaverify} and Appendix \ref{PShaverify} for binary and Poisson regression setups, respectively.
%The rest of our article is structured as follows. In section \ref{model} we present the model set-up of non-parametric  Binary Regression model equipped  Gaussian process prior, along with  specifications of notations and discussion of preliminaries regarding the model. The necessary Assumptions for achieve consistency results and  discussions about those assumptions are given in Section \ref{ssAssumption}. We will present the summary of the verification of Shalizi’s results and our main results of posterior convergence in section \ref{convergence}. The brief details of the necessary assumptions and results of \citet{ss} are
%provided in Appendix \ref{ASZ}. The detailed proof of verification of  Shalizi with the help of properties of model set-up along with necessary  model  assumptions will be provided in Appendix \ref{Shaverify} and Appendix \ref{PShaverify} for Binary regression model set up and Poisson model set up respectively. We will deal with rate of convergence in section \ref{rate} and model misspecification
%issue in Section  \ref{miss}. Finally, some concluding remarks will be in Section \ref{conclusion}.

%%%%%%%%%%%%%%%%%%%%%%%%%%%%%%%%%%%%%%%%%%%%%%%%%%%%%%%%%%%%%%%

  \section{ An outline of the main assumptions and results of Shalizi}
\label{ss1}

Let the set of random variables for the response be denoted by $\mathbf{Y_n} = \left(Y_1, Y_2, \ldots, Y_n \right)$. For a given parameter space $\Theta$, let $f_{\theta}(\mathbf{Y_n})$ be the observed likelihood and $f_{\theta_0}(\mathbf{Y_n})$ be the true likelihood. We assume $\theta \in \Theta$ but the truth $\theta_0$ need not be in $\Theta$, 
thus allowing possible misspecification.

The KL divergence $KL(f, g)=\int f \log (\frac{f}{g})$ is a measure of divergence between two probability densities $f$ and $g$.
The KL divergence is related to likelihood ratios, since by the Strong Law of Large Numbers (SLLN) for independent and identical ($iid$) situations, 
\begin{equation*}
\frac{1}{n}\displaystyle \sum_{i=1}^{n} \log\left[\dfrac{f(Y_i)}{g(Y_i)}\right] \rightarrow KL(f, g ). 
\end{equation*}
For every $\theta\in\Theta$, the KL divergence rate is given by:
\begin{equation}
h(\theta)=\underset{n\rightarrow\infty}{\lim}~\dfrac{1}{n}E\left[\log\left\{\dfrac{f_{\theta_0}(\mathbf{Y_n})}{f_{\theta}(\mathbf{Y_n})}\right\}\right].
\label{eq:A2}
\end{equation}

The key ingredient associated with
the approach of \ctn{Shalizi09} for proving convergence of the posterior distribution of $\theta$ is to show that the
asymptotic equipartition property holds. 
To illustrate, let us consider the following likelihood
ratio:

\begin{equation}
R_n(\theta)=\dfrac{f_{\theta}(\mathbf{Y_n})}{f_{\theta_0}(\mathbf{Y_n})}.
\end{equation}

If we think of the $iid$ setup, $h(\theta)$ reduces to the KL divergence between the true and the
hypothesized model.
For each $\theta\in\Theta$, ``asymptotic equipartition"  property is as follows:

\begin{equation}
\underset{n\rightarrow\infty}{\lim}~\frac{1}{n}\log \left[R_n(\theta)\right]=-h(\theta),
\label{eq:e1}
\end{equation}
Here
``asymptotic equipartition" refers to dividing up $\log \left[R_n(\theta)\right]$ into $n$ factors for large $n$ 
such that all the factors are asymptotically equal. For illustration, in the $iid$ scenario, each factor converges to the same KL divergence between the true and the postulated model. 
The purpose of asymptotic equipartition is to ensure that relative to the true distribution, 
the likelihood of each $\theta$ decreases to zero exponentially
fast, with rate being the KL divergence rate.

for $A\subseteq\Theta$, let
\begin{align}
h\left(A\right)&=\underset{\theta\in A}{\mbox{ess~inf}}~h(\theta);\label{eq:h2}\\
J(\theta)&=h(\theta)-h(\Theta);\label{eq:J}\\
J(A)&=\underset{\theta\in A}{\mbox{ess~inf}}~J(\theta),\label{eq:J2}
\end{align}
where  $ h(A)$ roughly represent the minimum KL-divergence between the postulated
and the true model over the set A. If $h(\Theta) > 0$, it indicates model misspecification. However, as we shall show, 
model misspecification need not always imply that $h(\Theta) > 0$. One such counter example is also given in \ctn{Chatterjee18a}.

Observe that, for $A \subset \Theta$, $J(A)>0$.
For the prior, it is required to construct an appropriate sequence of sieve sets  $\mathcal G_n\rightarrow\Theta$ as $n\rightarrow\infty$ 
such that: 

\begin{enumerate}
	\item[(1)] $h\left(\mathcal G_n\right)\rightarrow h\left(\Theta\right)$, as $n\rightarrow\infty$.
	\item[(2)]
	$\pi\left(\mathcal G_n\right)\geq 1-\alpha\exp\left(-\beta n\right),~\mbox{for some}~\alpha>0,~\beta>2h(\Theta)$;
	%\label{eq:A5_1}$
	
\end{enumerate}

The sets $\mathcal G_n$ can be  interpreted as the sieves 
in the sense that, the behaviour of the likelihood ratio and the posterior on the sets $\mathcal G_n$ essentially carries over to $\Theta$. 

Let $\pi(\cdot|\mathbf{Y}_n)$ denote the posterior distribution of $\theta$ given $\mathbf{Y}_n$. Then with the above notions, verification of \eqref{eq:e1} along with several other technical conditions (details given in Appendix \ref{ASZ}) ensure that  any $A \subseteq \Theta$ for which $\pi(A)>0$,   
%With the above notions, verification of (\ref{eq:equipartition}) along with several other technical conditions ensure that for any $A\subseteq\Theta$ such that $\pi(A)>0$, 
\begin{equation}
	\underset{n\rightarrow\infty}{\lim}~\pi(A|\bY_n)=0,
	\label{eq:post_conv1}
\end{equation}
almost surely, provided that $h(A)>h(\Theta)$. 
The latter $h(A)>h(\Theta)$ implies positive KL-divergence in $A$, even if $h(\Theta)=0$. That is, $A$ is the set in which the postulated model
fails to capture the true model in terms of the KL-divergence. Hence, expectedly, the posterior probability of that set converges to zero. 

Under mild assumptions, it also holds that
\begin{equation}
	\underset{n\rightarrow\infty}{\lim}~\frac{1}{n}\log\pi(A|\bY_n)=-J(A),
	\label{eq:post_conv2}
\end{equation}
almost surely.
This result shows that the rate at which the posterior probability of $A$ converges to zero is about $\exp(-nJ(A))$. From the above results it is clear 
that the posterior concentrates
on sets of the form $N_{\epsilon}=\left\{\theta:h(\theta)\leq h(\Theta)+\epsilon\right\}$, for any $\epsilon>0$.

Shalizi addressed the rate of posterior convergence as follows. Letting 
$N_{\epsilon_n}=\left\{\theta:h(\theta)\leq h(\Theta)+\epsilon_n\right\}$, where $\epsilon_n\rightarrow 0$ such that $n\epsilon_n\rightarrow\infty$,
Shalizi showed, under an additional technical assumption, that almost surely, 
\begin{equation}
	\underset{n\rightarrow\infty}{\lim}~\pi\left(N_{\epsilon_n}|\bY_n\right)=1.
	\label{eq:conv_rate}
\end{equation}

Moreover, it was shown by Shalizi that the squares of the Hellinger and the total variation
distances between the posterior predictive distribution and the best possible predictive distribution under the truth, 
are asymptotically almost surely bounded above by $h(\Theta)$ and $4h(\Theta)$,
respectively. That is, if $h(\Theta)=0$, then this allows very accurate approximation of the true predictive distribution by the posterior predictive distribution. 

%The rest of our article is structured as follows. In section \ref{model} we present the model set-up of non-parametric  Binary Regression model equipped  Gaussian process prior, along with  specifications of notations and discussion of preliminaries regarding the model. The necessary Assumptions for achieve consistency results and  discussions about those assumptions are given in Section \ref{ssAssumption}. We will present the summary of the verification of Shalizi’s results and our main results of posterior convergence in section \ref{convergence}. The brief details of the necessary assumptions and results of \citet{ss} are
%provided in Appendix \ref{ASZ}. The detailed proof of verification of  Shalizi with the help of properties of model set-up along with necessary  model  assumptions will be provided in Appendix \ref{Shaverify} and Appendix \ref{PShaverify} for Binary regression model set up and Poisson model set up respectively. We will deal with rate of convergence in section \ref{rate} and model misspecification
%issue in Section  \ref{miss}. Finally, some concluding remarks will be in Section \ref{conclusion}.

\section{Model setup and preliminaries of the binary regression }
%\paragraph{To write about Gaussian Process (GP)} 
\label{binary_model}
Let $Y \in \{0, 1\}$ be a binary
outcome variable and X a vector of covariates. 
Suppose $Y_1, Y_2, \ldots, Y_n \in  \{0, 1\}^n$ are some independent binary responses conditional on unobserved  covariates $X_1, X_2, \ldots, X_n \in \mathfrak{X} \subset \Re^d$.  We assume that the covariate space  $\mathfrak{X}$ is compact. Let $\mathbf{Y}_n =(Y_1, Y_2, \ldots, Y_n)^T $ be the binary response random variables against the covariate vector $\mathbf{X}_n=(X_1, X_2, \ldots, X_n)^T$. The corresponding observed values will be denoted by $\mathbf{y}_n=(y_1, y_2, \ldots, y_n)$ and $\mathbf{x}_n=(x_1, x_2, \ldots, x_n)$ respectively. 
Let the model be specified as follows: for $ i=1, 2, \ldots, n$:
\begin{eqnarray}\label{e1}
& Y_i| X_i \displaystyle \sim Binomial\left( 1, p(X_i)\right) \\
%& X_i\sim Q.  \\
& p(x) = H \left(\eta(x)\right) \\
	& \eta(\cdot) \sim \pi_{\eta}, %GP \left(\mu(\cdot), \sigma(\cdot, \cdot)\right)
\end{eqnarray}
where $\pi_{\eta}$ is the prior for some suitable stochastic process.
Note that the prior for $p$ is induced by the prior for $\eta$.
Our concern is to infer about  the success probability function  $p(x) =P(Y = 1|X=x)$  when the number of observations goes to infinity. 
We will assume that the functions $\eta$ have continuous first partial derivatives. We denote this class of functions by $\mathcal{C'}(\mathfrak{X})$. 
%$\eta \in \mathcal{C'}(\mathfrak{X}),$ where $ \mathcal{C'}(\mathfrak{X})$ is as defined in Assumption (A2) (see section\ref{AA2}). 
We do not assume the truth $\eta_0$ in $ \mathcal{C'}(\mathfrak{X})$, allowing misspecification.
The link function $H$ is a known, non-decreasing, continuously differentiable cumulative distribution function on the real line $\Re$. 
It is widely accepted to assume the function $H(\cdot)$ to be known as part of model assumption. For example, in  logistic regression we choose the standard 
logistic cumulative distribution function as the link function, whereas in probit regression $H$ is chosen to be the standard normal cumulative distribution function $\phi$. 
More  discussion on link function along with several other examples can be found in \ctn{Choudhuri07}, \ctn{Newton96}, \ctn{Gelfand91}.  A Bayesian method for estimation of $p$ 
has been provided in \ctn{Choudhuri07}. In has been shown in \ctn{Ghosal06} that the sample paths of the Gaussian processes can well approximate a large class of functions 
and hence it is not essential to consider additional uncertainty in the link function $H$. 

Let $\mathfrak{C}$ be the counting measure on $\{0,1\}$.  
Then according to the model assumption, the conditional density of $y$ given $x$ with respect to $\mathfrak{C}$ will be represented by the 
density function $f$ as follows:

\begin{equation}
f(y|x)= p(x)^y \left(1-p(x)\right)^{1-y}.\label{eq:binary_density}
\end{equation} 
The prior for $f$ will be denoted by $\pi$.
Let $f_0$ and $p_0$ denote truth density and success probability, respectively. Then under the truth, the joint density is:

\begin{equation}
f_{0}(y|x)= p_{0}(x)^y \left(1-p_0(x)\right)^{1-y}.
\label{eq:binary_density_true}
\end{equation}

One of the main objectives of this article is to show consistency of the posterior distribution of $p$ treated as parameter arising 
from the parameter space $\Theta$ specified as follows: 
\begin{equation} \label{Theta}
\Theta =\left \lbrace  p(\cdot): p(x)=H\left( \eta(x)\right), \eta \in \mathcal{C'}(\mathfrak{X}) \right \rbrace,
\end{equation} 
or simply, $\Theta=\mathcal{C'}(\mathfrak{X})$.
%,  \ E(\eta)=\mu \in \mathcal{A}_{\sigma_0}
%where continuously partially differentiable set of functions $ \mathcal{C'}(\mathfrak{X})$ is the parameter space for $\eta$, as defined in Assumption \ref{AA2} (see section \ref{ssAssumption}).
%We assume ((Assumption\ref{AA6}) of section \ref{ssAssumption}) that the covariance kernel of $\eta$   to be of the form $\sigma(x_1, x_2)= \dfrac{1}{\tau} \cdot  \sigma_{0}(\lambda x_1, \lambda x_2)$  for some non-singular covariance kernel $\sigma_{0}(\cdot,\cdot)$ to make sure RKHS generated by $\sigma$ is same as RKHS generated by non-singular covariate kernel $\sigma_0$. Because of the same reason, same type of covariate kernel set-up has  been taken by \citet{ga}.

%For model misspecified model set-up, we consider the parameter set as follows:
%\begin{equation} \label{Theta_mis}
%\Theta_{(M)} =\left \lbrace  p(\cdot): p(x)=H\left( \eta(x)\right), \eta \in \mathcal{C'}(\mathfrak{X}) \cap \mathcal{A}_{\sigma_0} \right \rbrace,
%\end{equation} 

%Observe that the whole proof holds perfectly well in view of Lemma \ref{vdvd}. \\

\section{Model setup and preliminaries of Poisson regression }
%\paragraph{To write about Gaussian Process (GP)} 
\label{poisson_model}
For Poisson regression model set up, let $Y \in \mathbb{N}$ be a count 
outcome variable and $X$ a vector of covariates. Here $\mathbb{N}$ denote the set of non negative integers. 
Suppose $Y_1, Y_2, \ldots, Y_n \in \mathbb{N}^n$ are some independent responses conditional on covariates $X_1, X_2, \ldots, X_n \in \mathfrak{X} \subset \Re^d$.  
We assume that the covariate space  $\mathfrak{X}$ is compact. Let $\mathbf{Y}_n =(Y_1, Y_2, \ldots, Y_n)^T $ be the response random variables against the covariate vector $\mathbf{X}_n=(X_1, X_2, \ldots, X_n)^T$. The corresponding observed values will be denoted by $\mathbf{y}_n=(y_1, y_2, \ldots, y_n)$ and $\mathbf{x}_n=(x_1, x_2, \ldots, x_n)$ respectively. %The parameter space is $\Theta=\mathcal{C'}(\mathfrak{X})$.
Let the parameter space be specified as follows:
\begin{equation} \label{Lambda}
\Lambda =\left \lbrace  \lambda(\cdot): \lambda(x)=H\left( \eta(x)\right), \eta \in \mathcal{C'}(\mathfrak{X}) \right \rbrace.\end{equation} 
%where $ \mathcal{C'}(\mathfrak{X})$ is as defined in Assumption \ref{AA2} (see section \ref{ssAssumption}) and 
The link function $H$ is a known, non-negative continuously differentiable function on $\Re$. We equivalently define the parameter space as $\Theta=\mathcal{C'}(\mathfrak{X})$.
Thus, in what follows, we shall use both $\Lambda$ and $\Theta$ to denote the parameter space, depending on convenience.
%Moreover, we will assume $H$ to be uniformly bounded below by some constant $\kappa$. Observe that for Binary regression model set up we assumed the link function $H$ to be cumulative distribution function additionally to make $p(\cdot)$ a probability which lies between $0$ to $1$. Here we do not need that except boundedness of $H$. \\
Then the model is specified as follows: for $ i=1, 2, \ldots, n$,
\begin{eqnarray}\label{Pe1}
& Y_i|X_i \sim  \exp\left(-\lambda(X_i)\right)\dfrac{(\lambda(X_i))^y}{y!} \\
%& X_i\sim Q.  \\
& \lambda(x) = H \left(\eta(x)\right); \\
	& \eta(\cdot) \sim \pi_{\eta}. %GP \left(\mu(\cdot), \sigma(\cdot, \cdot)\right)
\end{eqnarray}
Similar to binary regression, here our concern will be to infer about $\lambda(x)$ when the number of observations goes to infinity. 
We do not assume the truth $\eta_0$ in $ \mathcal{C'}(\mathfrak{X})$ as before, allowing misspecification. 
%We will assume a prior
%on $\lambda$ induced by the relation $\lambda(x) = H \left(\eta(x)\right)$,  where $\eta$ is a Gaussian process $(GP)$ 
%indexed by $\mathfrak{X}$ with mean function $\mu(\cdot)$ with covariance kernel $\sigma(\cdot, \cdot)$.

Now, suppose $\mathfrak{C}$ be the counting measure on $\mathbb{N}$.  
According to the model assumption for Poisson regression, the conditional density of $y$ given $x$ with respect to 
$\mathfrak{C}$ will be represented by density function $f$ as follows:

\begin{equation}
f(y|x)= \exp\left(-\lambda(x)\right)\dfrac{(\lambda(x))^y}{y!}.\label{density}
\end{equation} 
The prior for $f$ will be denoted by $\Pi$.
Let $f_0$ and $\lambda_0$ denote truth density and true mean function, respectively. 
Again, one of our main aims is to establish consistency of the posterior distribution of $\lambda$ treated as parameter arising from $\Lambda$.
%We assume ((Assumption\ref{AA6}) of section \ref{ssAssumption}) that the covariance kernel of $\eta$   to be of the form $\sigma(x_1, x_2)= \dfrac{1}{\tau} \cdot  \sigma_{0}(\lambda x_1, \lambda x_2)$  for some non-singular covariance kernel $\sigma_{0}(\cdot,\cdot)$ to make sure RKHS generated by $\sigma$ is same as RKHS generated by non-singular covariate kernel $\sigma_0$. Because of the same reason, same type of covariate kernel set-up has  been taken by \citet{ga}.

%For model misspecified model set-up, we consider the parameter set as follows:
%\begin{equation} \label{Theta_mis}
%\Theta_{(M)} =\left \lbrace  p(\cdot): p(x)=H\left( \eta(x)\right), \eta \in \mathcal{C'}(\mathfrak{X}) \cap \mathcal{A}_{\sigma_0} \right \rbrace,
%\end{equation} 

%Observe that the whole proof holds perfectly well in view of Lemma \ref{vdvd}. \\

\section{ Assumptions and their discussions }
\label{ssAssumption}
%Regarding the model and the prior, we make the following assumptions:

%Assumption  \ref{AA1}, Assumption \ref{AA2}, Assumption \ref{AA4}, Assumption \ref{AA7} and Assumption \ref{AA8} 
We need to make some appropriate assumptions for establishing convergence of both the binary and Poisson regression models equipped with stochastic process prior. The
latter also requires suitable assumptions.
Many of the assumptions are similar to those taken in \ctn{Chatterjee18a}. Hence the purpose of such assumptions will be as discussed in 
\ctn{Chatterjee18a}, which we shall briefly 
touch upon here. 
%Assumption \ref{AA5} is same as one  crucial Assumption of \citet{ga}  with similar purpose as stated in \citet{ga}. Moreover, using the assumptions \ref{AA1} to \ref{AA7}, we intend to establish consistency result even for the misspecified model and novel rate of consistency in  both model set up.

\begin{as}\label{AA1}
	$\mathfrak{X}$ is a compact, $d$-dimensional space, for some finite $d \geq 1$ equipped with a suitable metric.
\end{as}

\begin{as}\label{AA2}
Recall that in our notation, $\mathcal{C}'(\mathfrak{X})$ denotes the class of continuously partially differentiable function on $\mathfrak{X}$. 
In other words, the functions $\eta\in \mathcal{C}'(\mathfrak{X})$ are continuous on $\mathfrak{X}$ and for such functions the limit
	\begin{equation}
	\eta_{j}'(\mathbf{x})=\dfrac{\partial \eta(\mathbf{x})}{\partial x_j}= \lim_{h \rightarrow 0} \dfrac{\partial \eta\left(\mathbf{x}+h \mathbf{\delta}_j\right)-\eta(\mathbf{x})}{\partial h}
	\end{equation}
	exists for each $\mathbf{x} \in \mathfrak{X}$ and is  continuous $\mathfrak{X}$. Here $\mathbf{\delta}_j$ is the
	d-dimensional vector with the $j$-th element as 1 and all the other elements as zero. 
\end{as}

	\begin{as}\label{AA3}
		
%		The Gaussian process $\eta(x)$ has a continuously partially differentiable mean function $\mu(x)$. In other words,  $\mu \in \mathcal{C}'(\mathfrak{X})$.   Moreover,  $\mu(\cdot)$ and its partial derivatives have finite sup-norms ( follows from Assumption \ref{AA1}).  Here the sup-norm for any real-valued function $g$ on $\mathfrak{X}$ is given by $\|g\|=\sup_{x \in \mathfrak{X}} |g(x)|$.

The priors for $\eta$ is chosen such that for $\beta>2h\left(\Theta\right)$, 
\begin{align}
	\pi\left(\|\eta\|\leq\exp\left(\left(\beta n\right)^{1/4}\right)\right)&\geq 1-c_{\eta}\exp\left(-\beta n\right);\notag\\
	\pi\left(\|\eta^\prime_j\|\leq\exp\left(\left(\beta n\right)^{1/4}\right)\right)&\geq 1-c_{\eta^\prime_j}\exp\left(-\beta n\right),~\mbox{for}~j=1,\ldots,d;\notag
\end{align}
where $c_{\eta}$ and $c_{\eta^\prime_j}$; $j=1,\ldots,d$, are positive constants.

%	 $\mathcal{C}'(\mathfrak{X})$  

\end{as}

%\begin{as}\label{AA3}
%	%For  $x \in  \mathfrak{X}$, the covariance function  $\sigma_{0}(x, \cdot) \in \mathcal{C}'(\mathfrak{X}).$  Moreover, 
%	The covariance function $\sigma(\cdot, \cdot)$ has continuous fourth partial derivatives.
%\end{as}

We treat the covariates as either random (observed or unobserved) or non-random (observed). 
Accordingly, in Assumption \ref{AA4} below we provide conditions pertaining to these aspects.

\begin{as}\label{AA4}
	\begin{itemize}
		\item[(i)] $\{x_i: i = 1, 2, \ldots \}$ is an observed or unobserved sample associated with an iid sequence associated with some probability measure $Q$, supported on $\mathfrak{X}$, which is
		independent of $\{y_i: i = 1, 2, \ldots \}$
		\item[(ii)] $\{x_i: i = 1, 2, \ldots \}$ is an observed non-random sample. In this case, we consider a
		specific partition of the $d$-dimensional space $\mathfrak{X}$ into n subsets such that each subset
		of the partition contains at least one $x \in  \{x_i: i = 1, 2, \ldots \}$ and has Lebesgue measure $\frac{L}{n}$, for some $L > 0$.
	\end{itemize}
\end{as}

\begin{as}\label{AA5}
	The truth function $\eta_0$ is bounded in sup norm. In other words, the truth $\eta_0$ satisfies the following for some constant $\kappa_0$ :
	\begin{equation}
		\|\eta_0\|_{\infty} < \kappa_0 <\infty
	\end{equation} 
	Observe that in general $\eta_0 \notin \mathcal{C}'(\mathfrak{X})$. For random covariate $X$, we assume that $\eta_{0}(X)$ is measurable.
\end{as}

%$d=\inf \{\min \left( \tilde{p}(x), 1-\tilde{p}(x) \right): 0\leq x\leq 1\}>0$. 

\begin{as}\label{AA6}
	For binary regression model set up we assume a uniform positive lower bound $\kappa_B$ for $\min\{ p(\cdot), 1-p(\cdot)\}$. In other words, for all $p\in \Theta$,
		\begin{equation}
\inf \{\min \left( p(x), 1-p(x) \right):  \ x \in \mathfrak{X}\}\geq \kappa_B >0, 
	\end{equation} 
where $\Theta$ as defined in expression \ref{Theta}.
\end{as}

\begin{as}\label{AA7}
For Poisson regression model set up we assume a uniform positive lower bound $\kappa_P$ for $\lambda(\cdot)$. In other words, for all $\lambda \in \Lambda$,
	\begin{equation}
	\inf \{\lambda(x) :  \ x \in \mathfrak{X}\}\geq \kappa_P >0, 
	\end{equation} 
	where $\Lambda$ is as defined in expression \ref{Lambda}.
\end{as}

%\begin{as}\label{AA6}
%	The mean function $\mu(x)$ of $\eta \in \mathcal{C}'(\mathfrak{X})$  is such that $\mu \in \bar{\mathcal{A}}$ where $\bar{\mathcal{A}}$ is the  RKHS of the covariance kernel $\sigma_{0}(\cdot, \cdot).$

%\end{as}

% On the contrary, \citet{ga} assumed that $\eta_0$ is uniformly bounded above and below. 

%\begin{eqnarray}
%&\Pi_{\lambda}(\lambda >t)\leq c_{\lambda} \exp(-\beta t), \ \ t \rightarrow 0;\\
%&  \Pi_{\tau}(\tau < t)\leq c_{\tau} \exp(-\beta t), \ \ t \rightarrow \infty.
%\end{eqnarray}

%In \citet{ga}, the similar assumption has been taken, which is, existence  of  sequences $\{\lambda_i\}_{i\in \mathcal{N}}$ and $\{\tau_i\}_{i\in \mathcal{N}}$  such that  the hyper prior $\Pi_{\tau}$  on $\tau$ and $\Pi_{\lambda}$ on $\lambda$ satisfies the following:
%\begin{equation}
%  \Pi_{\lambda}(\lambda > \lambda_n)\leq \exp(-cn), \ \  \Pi_{\tau}(\tau > \tau_n)\leq \exp(-cn)
%\end{equation}

\subsection{Discussion of the assumptions}
\label{subsec:ass_diss}

Assumption \ref{AA1} is on compactness of $\mathfrak{X}$, which  guarantees that continuous functions on $\mathfrak{X}$ will have finite sup-norms.

Assumption \ref{AA2} is as taken in \ctn{Chatterjee18a} for the purpose of  constructing appropriate sieves in order to show  posterior
convergence results. More precisely, Assumption \ref{AA2} is required for to ensure that $\eta$ is Lipschitz continuous in
the sieves. Since a differentiable function is Lipschitz if and only if its partial derivatives are bounded, this serves our purpose, as continuity of the partial derivatives 
of $\eta$ guarantees the boundedness in the
compact domain $\mathfrak{X}$. In particular, if $\eta$ is a Gaussian process, conditions presented in 
\ctn{Adler81}, \ctn{Adler07}, \ctn{Cramer67} guarantee the above continuity and smoothness properties
required by Assumption \ref{AA2}. We refer to \ctn{Chatterjee18a} for more discussion about this.

Assumption \ref{AA3} is required for ensuring that the complements of the sieves have exponentially small probabilities. In particular, this assumption is
satisfied if $\eta$ is a Gaussian process, even if $\exp\left(\left(\beta n\right)^{1/4}\right)$ is replaced with $\sqrt{\beta n}$.
% it also guarantees that $\mathcal{A}_{\sigma} \subseteq \mathcal{C}(\mathfrak{X})$, although, as pionted out in section{model}, the RKHS $\bar{\mathcal{A}}_{\sigma_0}\not \subseteq \mathcal{C}(\mathfrak{X})$ in general. We will exploit this more for dealing with model misspecification (section {miss}).

Assumption \ref{AA4} is for the covariates $x_i$, accordingly as they are considered an observed random sample, unobserved random sample, or non-random. 
%In particular, we can use the strong law of large numbers (SLLN). We refer to Appendix \ref{subsec:S2} and \ref{subsec:S3}.
Note that thanks to the strong law of large numbers (SLLN), given any $\eta$ in the complement of some null set with respect to the prior, 
and given any sequence
$\left\{\bx_i:i=1,2,\ldots\right\}$ 
Assumption \ref{AA4} (i) ensures that for any integrable function $g$, %$\nu>0$, 
as $n\rightarrow\infty$,
\begin{equation}
	%\frac{1}{n}\sum_{i=1}^n\left|\eta(\bx_i)-\eta_0(\bx_i)\right|^{\nu}\rightarrow\int_{\mathcal X}\left|\eta(\bx)-\eta_0(\bx)\right|^{\nu}dQ(\bX)
	%=E_{\bX}\left|\eta(\bX)-\eta_0(\bX)\right|^{\nu}~\mbox{(say)},
	\frac{1}{n}\sum_{i=1}^ng(\bx_i)\rightarrow\int_{\mathfrak X}g(\bx)dQ(\bX)
	=E_{\bX}\left[g(\bX)\right]~\mbox{(say)},
	\label{eq:a3_1}
\end{equation}
where $Q$ is some probability measure supported on $\mathfrak X$.
%The same result (\ref{eq:a3_1}) holds if $\bx_i$ are considered unobserved random quantities.

Assumption \ref{AA4} (ii) ensures that $\frac{1}{n}\sum_{i=1}^ng(\bx_i)$ is a particular Riemann sum and hence (\ref{eq:a3_1}) holds with
$Q$ being the Lebesgue measure on $\mathfrak X$. 
We continue to denote the limit in this case by $E_{\bX}\left[g(\bX)\right]$.

%given any $\eta$ in the complement of some null set with respect to the prior, and given any sequence

Assumption \ref{AA5} is equivalent to the  Assumption(T) of \ctn{Ghosal06}. Assumption \ref{AA5}  actually implies that $p_0(x) = H(\eta_{0}(x))$ is bounded away 
from 0 and 1 and hence the corresponding truth function $\eta_{0}$ given by $\eta_{0}(x)= H^{-1}(p_0(x))$ is uniformly bounded above and below. 
%To put precisely, Lemma \ref{AA7} (which is stated as an Assumption in \citet{ga}) follows directly from Assumption \ref{AA5}.

%\begin{lem}\label{AAA7}
%	\begin{equation}  \label{as555}
%	\exists \ 0<\epsilon_0 <\frac{1}{2} \ \ \ni \  \epsilon_0 < p_{0}(x) <1-\epsilon_{0}.
%	\end{equation}
%\end{lem}

As $\eta_0$ is uniformly bounded above and below, hence $p_0(x)=H(\eta_{0}(x))$ will also be bounded away from 0 and 1. 
%We could have started with Assumption (T) of \citet{ga} instead of Assumption  \ref{AA5}. \\
For the Poisson regression model set up it follows that $\|\lambda_0\|_{\infty}<\infty$. 

% It is to be noted that because of Assumptions \ref{AA5}, \ref{AA6} and \ref{AA7} we are actually bypassing some of the assumptions of \citet{ga} about specifications of specific kind of hyper parameters of RKHS (Reproducing Kernel Hilbert Space), while we are showing similar consistency result. Alternatively, we could have  specified model set up and assumptions  in terms of hyper parameters of  RKHS along with our model set up and proceed accordingly without taking Assumption \ref{AA5}, \ref{AA6} and \ref{AA7}.\\
It is to be noted that here we do not require to assume that $ p_0 \in \Theta$ or $ \lambda_0 \in \Lambda$, allowing model misspecifications.
 
%\begin{rmk}
	Observe that, similar to %\ctn{Ghosal06} and 
	\ctn{Pillai07} we need the parameter space for Poisson regresion to be bounded away from
	zero (Assumption \ref{AA7}). As pointed out in \ctn{Pillai07}, we cannot bypass this and as such these are not a mere pathway towards our proof. 
This is because, if  almost all observations in a sample from a Poisson distribution are zero, then it impossible to extract the information about the (log) mean. 
Hence we must require at least some condition to make it bound away from zero. Similar argument also applicable for binary regression, which is reflected in
	Assumption \ref{AA6}. 
%\end{rmk}

%\begin{rmk}
	It is important to remark that Assumptions \ref{AA6} and \ref{AA7} are necessary only to validate Assumption (S6) of Shalizi, and unnecessary elsewhere.
	The reasons are clarified in Remarks \ref{rmk:hoeff_binary} and \ref{rmk:hoeff_poisson}. Although many of our proofs would be simpler if 
	Assumptions \ref{AA6} and \ref{AA7} were used, we reserved these assumptions only to validate Assumption (S6) of Shalizi.
%\end{rmk}

%\begin{rmk}
	To achieve Assumptions \ref{AA6} and \ref{AA7}, we set, for all $x\in\Re$, 
	\begin{equation}
		H(x)=\kappa_B\mathbb I_{\left\{G(x)\leq\kappa_B\right\}}(x)+G(x)\mathbb I_{\left\{\kappa_B<G(x)< 1-\kappa_B\right\}}(x)
		+(1-\kappa_B)\mathbb I_{\left\{G(x)\geq 1-\kappa_B\right\}}(x),
		\label{eq:binary_H}
	\end{equation}
	for the binary case, where $0<\kappa_B<1/2$, and
	\begin{equation}
		H(x)=\kappa_B\mathbb I_{\left\{G(x)\leq\kappa_P\right\}}(x)+G(x)\mathbb I_{\left\{G(x)> \kappa_P\right\}}(x),
		\label{eq:poisson_H}
	\end{equation}
	where $\kappa_P>0$.
	In (\ref{eq:binary_H}), $G$ is a continuously differentiable distribution function on $\Re$ and in (\ref{eq:poisson_H}), $G$ is a non-negative continuously
	differentiable function on $\Re$.

\section{Main results on posterior convergence }
\label{convergence}
%\subsection{For Nonparametric Binary Regression}

% Continuous function maps compact set to compact set 
Here we will state a summary of our main results regarding posterior convergence of nonparametric binary regression and Poisson regression. 
%For convenience, the assumptions of Shalizi are detailed in Appendix \ref{ASZ}.
The key results associated with the asymptotic
equipartition property are provided in Theorems \ref{T1} -- \ref{PT2}, proofs of which 
%follows from verification of consitions of \citet{ss}, details of which is 
are provided in %Appendix \ref{ASZ},  
Appendix \ref{Shaverify} (for binary regression) and in Appendix \ref{PShaverify} (for Poisson regression).

\begin{theorem} \label{T1}
	Let $Q$ and the counting measure $\mathfrak{C}$ on $\{0, 1\}$ be the measures associated with the random variable $X$ and the binary random variable $Y$ respectively. 
	Denote $E_{\mathbf{X}, \mathbf{Y}}(\cdot)= \int \int \cdot \ d\mathfrak{C} \ dQ$ and $E_{\mathbf{X}}(\cdot)= \int \int \cdot \   dQ$. Then under the 
	nonparametric binary regression model, under Assumption \ref{AA4}, the KL divergence rate $h(p)$ exists for $p \in \Theta$, and is given by
	\begin{equation}\label{h(p)}
	h(p)=\left[E_{\mathbf{X}} \left( p_{0}(\mathbf{X})\log \left\lbrace \dfrac{p_{0}(\mathbf{X})}{p(\mathbf{X})}\right\rbrace \right)+ E_{\mathbf{X}}\left((1-p_{0}(\mathbf{X})) \log \left\lbrace\dfrac{ \left(1- p_{0}(\mathbf{X})\right)}{\left(1- p(\mathbf{X})\right)} \right\rbrace\right) \right ].
	\end{equation}
	Alternatively, $h(p)$ admits the following form:
	\begin{equation}
	h(p)=E_{\mathbf{X}, \mathbf{Y}} \left( f_{0}(\mathbf{X}, \mathbf{Y})\log \left\lbrace \dfrac{f_{0}(\mathbf{X}, \mathbf{Y})}{f(\mathbf{X}, \mathbf{Y})}\right\rbrace \right), 
	\end{equation}
	where $f$ and $f_0$ are as defined in \eqref{eq:binary_density} and \eqref{eq:binary_density_true}.
\end{theorem}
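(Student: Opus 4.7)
The plan is to start from the definition (\ref{eq:A2}) and exploit the conditional-independence product structure of the binary regression likelihood. Since $Y_i\mid X_i\sim\mathrm{Bernoulli}(p_0(X_i))$ under the truth, the log-likelihood ratio splits additively as
\begin{equation*}
\log\frac{f_{p_0}(\mathbf{Y}_n)}{f_p(\mathbf{Y}_n)}
=\sum_{i=1}^n\!\left[Y_i\log\frac{p_0(X_i)}{p(X_i)}+(1-Y_i)\log\frac{1-p_0(X_i)}{1-p(X_i)}\right].
\end{equation*}
Taking expectation first over $Y_i\mid X_i$ and then over $X_i$ converts each summand into the pointwise Bernoulli KL divergence
\begin{equation*}
g(x)=p_0(x)\log\frac{p_0(x)}{p(x)}+(1-p_0(x))\log\frac{1-p_0(x)}{1-p(x)},
\end{equation*}
so that $\tfrac{1}{n}E\!\left[\log\{f_{p_0}(\mathbf{Y}_n)/f_p(\mathbf{Y}_n)\}\right]=\tfrac{1}{n}\sum_{i=1}^n E[g(X_i)]$.

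Next I would verify that $g$ is measurable and uniformly bounded, hence integrable. Assumption \ref{AA5} forces $p_0=H(\eta_0)$ to be uniformly separated from $0$ and $1$ (via monotonicity of $H$, or directly through the truncated link (\ref{eq:binary_H})), while Assumption \ref{AA6} supplies the analogous lower bound $\kappa_B$ for every admissible $p\in\Theta$. Both sets of logarithms in $g$ are therefore uniformly bounded on $\mathfrak{X}$, giving $g\in L^{\infty}(\mathfrak{X})$; measurability follows because $p$ is continuous and $p_0(X)$ is measurable by Assumption \ref{AA5}. With integrability in hand I invoke Assumption \ref{AA4}. Under (i) the $X_i$ are iid with law $Q$, so $E[g(X_i)]=E_{\mathbf{X}}[g(\mathbf{X})]$ for every $i$, and the Ces\`aro average equals $E_{\mathbf{X}}[g(\mathbf{X})]$ identically in $n$. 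Under (ii) the partition of $\mathfrak{X}$ into cells of Lebesgue measure $L/n$ turns $\tfrac{1}{n}\sum g(x_i)$ into a Riemann sum which by (\ref{eq:a3_1}) converges to $E_{\mathbf{X}}[g(\mathbf{X})]$, under the normalized-Lebesgue interpretation of $Q$ declared just after (\ref{eq:a3_1}). In either case the limit equals the expression in (\ref{h(p)}).

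Finally, the alternative joint-expectation form follows by Fubini. Using the prescription $E_{\mathbf{X},\mathbf{Y}}(\cdot)=\int\!\!\int\cdot\,d\mathfrak{C}\,dQ$ stated in the theorem,
\begin{equation*}
E_{\mathbf{X},\mathbf{Y}}\!\left[f_0(\mathbf{X},\mathbf{Y})\log\frac{f_0(\mathbf{X},\mathbf{Y})}{f(\mathbf{X},\mathbf{Y})}\right]
=\int_{\mathfrak{X}}\sum_{y\in\{0,1\}}f_0(y\mid x)\log\frac{f_0(y\mid x)}{f(y\mid x)}\,dQ(x),
\end{equation*}
and the inner sum over $y$ is exactly $g(x)$, so the whole expression collapses to $E_{\mathbf{X}}[g(\mathbf{X})]=h(p)$. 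The only genuinely delicate step is the uniform boundedness of $g$: without Assumptions \ref{AA5}--\ref{AA6} (and in particular without the truncation (\ref{eq:binary_H})) the ratios $p_0/p$ or $(1-p_0)/(1-p)$ could blow up, obstructing both the existence of the limit defining $h(p)$ and the interchange of summation and integration. Everything else is a direct computation or a straightforward appeal to Assumption \ref{AA4}.
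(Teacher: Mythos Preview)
Your proof is correct and follows essentially the same route as the paper: write the log-likelihood ratio as a sum over $i$, take the inner expectation over $Y_i\mid X_i$ to produce the pointwise Bernoulli KL $g(x_i)$, then apply Assumption~\ref{AA4} (SLLN or Riemann-sum) to pass to $E_{\mathbf X}[g(\mathbf X)]$. One small remark: you invoke Assumption~\ref{AA6} to bound $p$ away from $0$ and $1$, but the theorem as stated requires only Assumption~\ref{AA4}; for a \emph{fixed} $p\in\Theta$ the paper instead obtains this bound directly from compactness of $\mathfrak X$ and continuity of $H\circ\eta$ (see the proofs of (S1) and (S4)), so Assumption~\ref{AA6}---which supplies a \emph{uniform} bound over all of $\Theta$---is not needed here.
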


%\ref{AA7}
\begin{theorem} \label{PT1}
	Let $Q$ and the counting measure $\mathfrak{C}$ on $\mathbb{N}$ be associated with the random variable $X$ and the count random variable $Y$, respectively. 
	Denote  $E_{\mathbf{X}, \mathbf{Y}}(\cdot)= \int \int \cdot \ d\mathfrak{C} \ dQ$ and $E_{\mathbf{X}}(\cdot)= \int \int \cdot \   dQ$. Then under the nonparametric
	Poisson regression model, under Assumption \ref{AA4}, the KL divergence rate $h(\lambda)$ exists for $\lambda \in \Lambda$, and is given by
	\begin{equation}\label{Ph(p)}
	h(\lambda)=\left[E_{\mathbf{X}} \left(    \lambda(\mathbf{X})-\lambda_{0}(\mathbf{X})  \right)+ E_{\mathbf{X}} \left( \lambda_{0}(\mathbf{X})\log \left\lbrace \dfrac{\lambda_{0}(\mathbf{X})}{\lambda(\mathbf{X})}\right\rbrace \right) \right ].
	\end{equation}
	
\end{theorem}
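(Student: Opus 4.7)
The plan is to unfold the definition \eqref{eq:A2} of the KL divergence rate using the Poisson conditional density \eqref{density}. First I would compute the per-observation log-ratio
\begin{equation*}
\log\frac{f_0(Y_i \mid X_i)}{f(Y_i \mid X_i)} = \bigl(\lambda(X_i) - \lambda_0(X_i)\bigr) + Y_i \log\frac{\lambda_0(X_i)}{\lambda(X_i)},
\end{equation*}
since the $\log(Y_i!)$ terms cancel. Taking the conditional expectation under the truth, where $E[Y_i \mid X_i] = \lambda_0(X_i)$, produces the non-random integrand
\begin{equation*}
g(X_i) := \bigl(\lambda(X_i) - \lambda_0(X_i)\bigr) + \lambda_0(X_i)\log\frac{\lambda_0(X_i)}{\lambda(X_i)}.
\end{equation*}

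Next I would average $g(X_i)$ over $i=1,\ldots,n$ and match \eqref{eq:A2}. Under Assumption \ref{AA4}(i) the $X_i$ are iid from $Q$, so by Fubini $\tfrac{1}{n}E[\log\{f_0(\mathbf{Y}_n)/f(\mathbf{Y}_n)\}] = E_{\mathbf{X}}[g(\mathbf{X})]$ identically in $n$, and the limit is immediate. Under Assumption \ref{AA4}(ii), $\tfrac{1}{n}\sum_{i=1}^n g(x_i)$ is a Riemann sum whose limit is $E_{\mathbf{X}}[g(\mathbf{X})]$ with $Q$ identified as the scaled Lebesgue measure on $\mathfrak{X}$, exactly as recorded just after \eqref{eq:a3_1}.

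Justifying these interchanges reduces to showing that $g$ is bounded and measurable on $\mathfrak{X}$. For $\lambda \in \Lambda$, $\eta \in \mathcal{C}'(\mathfrak{X})$ is continuous on the compact set $\mathfrak{X}$ (Assumptions \ref{AA1} and \ref{AA2}), hence sup-norm bounded, and so is $\lambda = H(\eta)$; Assumption \ref{AA7} then gives $\lambda \geq \kappa_P > 0$, so $\log\lambda$ is bounded. For the truth, Assumption \ref{AA5} yields $\|\eta_0\|_\infty < \kappa_0$ with $\lambda_0(X) = H(\eta_0(X))$ measurable and bounded above; since $x \mapsto x\log x$ is bounded on any compact subinterval of $[0,\infty)$ (with the convention $0\log 0 = 0$), the term $\lambda_0\log\lambda_0$ is bounded even without a lower bound on $\lambda_0$. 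Hence $g$ is bounded.

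I do not anticipate a genuine obstacle; the argument is a direct Fubini-plus-limit computation. The only point that requires care is the uniform lower bound on $\lambda$ from Assumption \ref{AA7}, which keeps $\log(\lambda_0/\lambda)$ integrable; without it, the KL rate could diverge on subsets of $\mathfrak{X}$ where $\lambda$ approaches zero.
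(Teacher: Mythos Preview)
Your proposal is correct and follows essentially the same route as the paper: compute the per-observation log-ratio, take expectation in $Y$ using $E[Y_i\mid X_i]=\lambda_0(X_i)$, then pass to the limit via Assumption~\ref{AA4} (SLLN in case~(i), Riemann sum in case~(ii)). One minor divergence: you invoke Assumption~\ref{AA7} to secure a uniform lower bound on $\lambda$, whereas the paper explicitly reserves \ref{AA7} for the verification of (S6) only and relies here on the fact that, for fixed $\eta\in\mathcal{C}'(\mathfrak X)$, $\lambda=H(\eta)$ is continuous on the compact $\mathfrak X$ and hence bounded below by its (possibly zero) minimum---the theorem allows $h(\lambda)=\infty$, so finiteness is not asserted.
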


\begin{theorem} \label{T2}
	Under the nonparametric binary regression model and Assumption \ref{AA4}, the asymptotic equipartition property holds, and is given by 
	\begin{equation}
	\underset{n\rightarrow\infty}{\lim}~\frac{1}{n}\log \left[R_n(p)\right]=-h(p).
	\end{equation}
	The convergence is uniform on any compact subset of $\Theta$.
\end{theorem}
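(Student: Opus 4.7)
The plan is to decompose $\frac{1}{n}\log R_n(p)$ into an average of per-observation terms, obtain almost-sure pointwise convergence to $-h(p)$ via a strong law of large numbers, and then upgrade to uniformity on compact subsets of $\Theta$ by an equicontinuity argument. Writing
\[
T_i(p) = Y_i \log \frac{p(X_i)}{p_0(X_i)} + (1-Y_i) \log \frac{1-p(X_i)}{1-p_0(X_i)},
\]
we have $\frac{1}{n}\log R_n(p) = \frac{1}{n}\sum_{i=1}^{n} T_i(p)$. Assumption \ref{AA5} keeps $p_0$ uniformly away from $0$ and $1$ on $\mathfrak{X}$, and Assumption \ref{AA6} does the same for every $p\in\Theta$; hence each $T_i(p)$ is almost surely bounded by a single constant depending only on $\kappa_B$ and $\kappa_0$, independently of $i$ and $p$.

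For the pointwise convergence, under case (i) of Assumption \ref{AA4} the pairs $(X_i, Y_i)$ are i.i.d.\ under the truth, so I would apply Kolmogorov's SLLN directly to the bounded i.i.d.\ sequence $\{T_i(p)\}$; the tower property together with Theorem \ref{T1} identifies $E[T_1(p)] = -h(p)$. Under case (ii) the $Y_i$ are independent but not identically distributed, so I would split $T_i(p) = (T_i(p) - E[T_i(p)]) + E[T_i(p)]$. The centred piece is a sum of independent, mean-zero, uniformly bounded terms and averages to zero almost surely by Kolmogorov's SLLN, while $\frac{1}{n}\sum_{i=1}^{n} E[T_i(p)]$ is a Riemann sum of the integrand defining $h(p)$ and converges to $-h(p)$ via (\ref{eq:a3_1}).

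For uniform convergence on a compact $K\subset\Theta$ (in sup-norm), the key input is a common Lipschitz estimate. Using $|\log a-\log b|\leq |a-b|/\min(a,b)$ together with Assumptions \ref{AA5}--\ref{AA6}, one gets $|T_i(p) - T_i(q)| \leq 2\kappa_B^{-1}\|p-q\|_\infty$ almost surely, uniformly in $i$, and the same Lipschitz bound holds for $h(\cdot)$. I would then fix $\varepsilon>0$, cover $K$ by finitely many sup-norm balls of radius of order $\varepsilon\kappa_B$ centred at $p^{(1)},\ldots,p^{(N)}$, apply the pointwise statement at each centre, and intersect the finitely many almost-sure events to obtain a single full-measure event on which $|\frac{1}{n}\log R_n(p^{(j)}) + h(p^{(j)})|$ is eventually small for every $j$; the triangle inequality plus the Lipschitz bound then transfers convergence from the $p^{(j)}$ to all of $K$.

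The main obstacle is the uniform step: the pointwise SLLN is routine given the boundedness of $T_i(p)$, but uniformity relies on the Lipschitz estimate, which uses the strictly positive lower bound $\kappa_B$ of Assumption \ref{AA6} in an essential way. Without it, $T_i(p)$ would no longer be uniformly bounded in $p$, integrability itself would become problematic near the boundary, and the standard pointwise-to-uniform reduction sketched above would break down. The role of Assumptions \ref{AA5}--\ref{AA6} here is thus structural rather than merely technical.
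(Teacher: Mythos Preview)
Your proposal is correct and follows the same overall architecture as the paper: the identical per-observation decomposition $T_i(p)$, Kolmogorov's SLLN for the pointwise limit, and a Lipschitz estimate in $p$ to pass to uniform convergence on compacts. The paper checks Kolmogorov's variance criterion $\sum i^{-2}\mathrm{var}[T_i(p)]<\infty$ rather than invoking boundedness directly, and it packages the uniform step as ``stochastic equicontinuity'' (Lipschitz continuity of both $\frac{1}{n}\log R_n(p)$ and $h(p)$) instead of your explicit finite-cover argument, but these are equivalent.

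One point of contrast worth flagging: you lean on Assumption~\ref{AA6} throughout and call it ``structural rather than merely technical,'' whereas the paper takes the opposite stance, explicitly remarking that Assumptions~\ref{AA6}--\ref{AA7} are reserved solely for (S6) and are \emph{not} needed here. For the pointwise part the paper gets the required lower bound on $p(x)$ from the fact that, for each fixed $\eta\in\mathcal{C}'(\mathfrak X)$, continuity on the compact $\mathfrak X$ already keeps $p=H(\eta)$ bounded away from $0$ and $1$; for the uniform part it works on the sieves $\mathcal G_m$, where $\|\eta\|_\infty$ is bounded and $H$ is Lipschitz, so the same conclusion holds with a common constant. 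Your use of~\ref{AA6} is cleaner and more transparent, but the paper's route shows that the theorem as stated (only Assumption~\ref{AA4} plus the model setup) is in principle enough.
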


\begin{theorem} \label{PT2}
	Under the nonparametric Poisson regression model and Assumption \ref{AA4}, the asymptotic equipartition property holds, and is given by 
	\begin{equation}
	\underset{n\rightarrow\infty}{\lim}~\frac{1}{n}\log \left[R_n(\lambda)\right]=-h(\lambda).
	\end{equation}
	The convergence is uniform on any compact subset of $\Lambda$.
\end{theorem}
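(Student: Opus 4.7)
The plan is to decompose $\tfrac{1}{n}\log R_n(\lambda)$ into a manageable sum and then invoke strong laws of large numbers for independent (but non-identically distributed) random variables. Using the Poisson density in \eqref{density}, we may write
\begin{equation*}
\frac{1}{n}\log R_n(\lambda)=-\frac{1}{n}\sum_{i=1}^{n}\bigl(\lambda(X_i)-\lambda_0(X_i)\bigr)+\frac{1}{n}\sum_{i=1}^{n}Y_i\log\frac{\lambda(X_i)}{\lambda_0(X_i)}.
\end{equation*}
The target limit $-h(\lambda)$ from \eqref{Ph(p)} then splits cleanly across these two terms, so it suffices to treat each separately.

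The first sum involves only the covariates. Since $\lambda\in\Lambda$ is continuous on the compact space $\mathfrak{X}$ and $\lambda_0$ is uniformly bounded by Assumption \ref{AA5}, the integrand is bounded, and Assumption \ref{AA4}(i) together with the standard SLLN -- or Assumption \ref{AA4}(ii) together with Riemann-sum convergence -- delivers convergence to $E_{\mathbf{X}}[\lambda(\mathbf{X})-\lambda_0(\mathbf{X})]$. For the second sum, I would further decompose $Y_i\log(\lambda(X_i)/\lambda_0(X_i))$ into its conditional mean $\lambda_0(X_i)\log(\lambda(X_i)/\lambda_0(X_i))$ plus a centered noise term. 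The conditional mean part converges to $E_{\mathbf{X}}[\lambda_0(\mathbf{X})\log(\lambda(\mathbf{X})/\lambda_0(\mathbf{X}))]$ by the same SLLN, provided the log ratio stays bounded; this is exactly what Assumption \ref{AA7} (giving $\lambda\geq\kappa_P$) together with Assumption \ref{AA5} (giving $\lambda_0$ bounded) supplies.

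For the centered remainder, conditional on $\{X_i\}$ the summands $(Y_i-\lambda_0(X_i))\log(\lambda(X_i)/\lambda_0(X_i))$ are independent and mean-zero, with variances $\lambda_0(X_i)\bigl[\log(\lambda(X_i)/\lambda_0(X_i))\bigr]^2$ that are uniformly bounded in $i$ by the same considerations. Kolmogorov's SLLN then drives the average to zero almost surely. Combining these three pieces yields pointwise almost-sure convergence $\tfrac{1}{n}\log R_n(\lambda)\to -h(\lambda)$ for each fixed $\lambda\in\Lambda$.

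The final and most delicate step is upgrading this to uniform convergence on an arbitrary compact $K\subset\Lambda$; this is the main obstacle, because the Poisson responses $Y_i$ are unbounded, in contrast with the binary case. The plan is an Arzelà--Ascoli style argument: compactness of $K$ in $\mathcal{C}'(\mathfrak{X})$ with $\mathfrak{X}$ compact gives uniform boundedness and equicontinuity, so for any $\varepsilon>0$ we may select a finite $\varepsilon$-net $\{\lambda^{(1)},\ldots,\lambda^{(N_\varepsilon)}\}\subset K$, apply the pointwise limit at each net point, and control the oscillation of $\tfrac{1}{n}\log R_n(\lambda)$ between $\lambda$ and its nearest net point by a Lipschitz bound on $u\mapsto u-\lambda_0\log u$ over the range $u\in[\kappa_P,\sup_K\|\lambda\|_\infty]$. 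The oscillation contributed by the second sum carries a factor $\tfrac{1}{n}\sum_{i=1}^n Y_i$, which is almost surely bounded via $\tfrac{1}{n}\sum_{i=1}^n Y_i\to E_{\mathbf{X}}[\lambda_0(\mathbf{X})]<\infty$ (again by Assumption \ref{AA4} combined with conditional SLLN); this tames the unboundedness of the Poisson responses. The whole scheme mirrors the treatment of Theorem \ref{T2} for the binary model, with this extra truncation/moment bound being the essential Poisson-specific modification.
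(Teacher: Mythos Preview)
Your proposal is correct and follows essentially the same route as the paper. The paper also writes
\[
-\frac{1}{n}\log R_n(\lambda)=\frac{1}{n}\sum_{i=1}^n\Bigl\{[\lambda(x_i)-\lambda_0(x_i)]+y_i\log\frac{\lambda_0(x_i)}{\lambda(x_i)}\Bigr\},
\]
bounds $\sum_i i^{-2}\operatorname{Var}[\cdot]$ using the same log-ratio bound you invoke, and appeals to Kolmogorov's SLLN together with Assumption~\ref{AA4}; for uniformity it establishes stochastic equicontinuity via the Lipschitz estimate $\bigl|\tfrac{1}{n}\log R_n(\lambda_1)-\tfrac{1}{n}\log R_n(\lambda_2)\bigr|\le\|\lambda_1-\lambda_2\|_\infty\bigl(1+C\cdot\tfrac{1}{n}\sum_i y_i\bigr)$ and the a.s.\ convergence $\tfrac{1}{n}\sum_i y_i\to E_{\mathbf X}[\lambda_0(\mathbf X)]$, which is exactly your $\varepsilon$-net argument phrased slightly differently. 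Your three-piece decomposition (deterministic part, conditional mean, centered noise) is a minor organisational variant of the paper's two-piece treatment, not a different idea.
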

Theorems \ref{T1} and \ref{T2} for binary regression and Theorems \ref{PT1} and \ref{PT2} for Poisson regression ensure that conditions (S1) to (S3) of \ctn{Shalizi09} hold, and (S4) holds for both binary and Poisson regression because of compactness of $\mathfrak X$ and continuity of $H$ and $\eta$. 
The detailed proofs are presented in Appendix \ref{subsec:S4} and Appendix \ref{subsec:PS4}, respectively.  

We construct the sieves $\mathcal{G}_{n}$ for binary regression model set up as follows:
%\begin{equation}
%\mathcal{G}_n=\left \lbrace    p(\cdot) : \ \ p(x)=H(\eta(x)), \|\eta\| \leq \exp(\sqrt{\beta n}),  \lambda < \exp(-\sqrt{\beta n}), \tau > \exp(-\sqrt{\beta n}), \  \|\eta_{j}'\| \leq \exp(\sqrt{\beta n}); j=1, 2, \ldots, d\right \rbrace
%\end{equation}

\begin{equation}\label{sieveset1}
\begin{aligned}
	%\mathcal{G}_n={} & \{  p(\cdot) : \ \ p(x)=H(\eta(x)), \eta\in\mathcal C'(\mathfrak X), \|\eta\| \leq \exp(\left(\beta n\right)^{1/4}),  \\
	%&  \|\eta_{j}'\| \leq \exp(\left(\beta n\right)^{1/4}); j=1, 2, \ldots, d\}
	\mathcal{G}_n={} & \{  \eta\in\mathcal C'(\mathfrak X): \|\eta\| \leq \exp(\left(\beta n\right)^{1/4}),  \\
	&  \|\eta_{j}'\| \leq \exp(\left(\beta n\right)^{1/4}); j=1, 2, \ldots, d\}
\end{aligned}
\end{equation}

%\begin{equation}\label{sieve}
%\mathcal{G}_n=\left \lbrace    p(\cdot) : \ \ p(x)=H(\eta(x)), \|\eta\| \leq \exp(\sqrt{\beta n}),  \lambda < \exp(-\sqrt{\beta n}), \tau > \exp(-\sqrt{\beta n}), \  \|\eta_{j}'\| \leq \exp(\sqrt{\beta n}); j=1, 2, \ldots, d\right \rbrace
%\end{equation}
It follows that $\mathcal{G}_{n} \rightarrow \Theta$ as $n \rightarrow \infty$, where the parameter space $\Theta$ is given by \eqref{Theta}.
%\begin{equation} \label{Theta}
%\Theta =\left \lbrace  p(\cdot): p(x)=H\left( \eta(x)\right), \eta \in \mathcal{C'}(\mathfrak{X}) \right \rbrace,
%\end{equation} 

%%%%%*********************

In a similar manner, we construct the sieves $\mathbb{G}_{n}$ for binary regression as follows:
%\begin{equation}
%\mathcal{G}_n=\left \lbrace    p(\cdot) : \ \ p(x)=H(\eta(x)), \|\eta\| \leq \exp(\sqrt{\beta n}),  \lambda < \exp(-\sqrt{\beta n}), \tau > \exp(-\sqrt{\beta n}), \  \|\eta_{j}'\| \leq \exp(\sqrt{\beta n}); j=1, 2, \ldots, d\right \rbrace
%\end{equation}

\begin{equation}\label{Psieveset1}
\begin{aligned}
	\mathbb{G}_n={} & \{  \lambda(\cdot) : \ \ \lambda(x)=H(\eta(x)), \eta\in\mathcal C'(\mathfrak X), \|\eta\| \leq \exp(\left(\beta n\right)^{1/4}),  \\
	&  \|\eta_{j}'\| \leq \exp(\left(\beta n\right)^{1/4}); j=1, 2, \ldots, d\}.
\end{aligned}
\end{equation}

%\begin{equation}\label{sieve}
%\mathcal{G}_n=\left \lbrace    p(\cdot) : \ \ p(x)=H(\eta(x)), \|\eta\| \leq \exp(\sqrt{\beta n}),  \lambda < \exp(-\sqrt{\beta n}), \tau > \exp(-\sqrt{\beta n}), \  \|\eta_{j}'\| \leq \exp(\sqrt{\beta n}); j=1, 2, \ldots, d\right \rbrace
%\end{equation}
Then similarly it will also follow that $\mathbb{G}_{n} \rightarrow \Lambda$ as $n \rightarrow \infty$, where the parameter space $\Lambda$ is given by \eqref{Lambda}.
%\begin{equation} \label{Theta}
%\Theta =\left \lbrace  p(\cdot): p(x)=H\left( \eta(x)\right), \eta \in \mathcal{C'}(\mathfrak{X}) \right \rbrace,
%\end{equation} 

%Similarly, 

%%%%*********************

Assumption \ref{AA3} ensures that for binary regression, 
$\Pi \left( \mathcal{G}^{c}_{n}\right)\leq \alpha \exp(- \beta n)$ for some $\alpha >0$ and similarly 
$\Pi \left(  \mathbb{G}^{c}_{n}\right)\leq \alpha \exp(- \beta n)$ for Poisson regression. Now, these results, continuity of $h(\theta)$, $h(\lambda)$
(the proofs of continuity of $h(p)$ and $h(\lambda)$ follows using the same techniques as in Appendices \ref{subsec:S1} and \ref{subsec:PS1}),
compactness of $\mathcal{G}_n$, $\mathbb{G}_n$ and the uniform convergence results of Theorems \ref{T2} and \ref{PT2}, together ensure (S5) for both the model setups.

%*****
Now, as pointed out in \ctn{Chatterjee18a}, we observe that the aim of assumption (S6) is to ensure that (see the proof of Lemma 7
of \ctn{Shalizi09}) for every $\epsilon > 0$ and for all  sufficiently large $n$,

\begin{equation}
\dfrac{1}{n} \log \int_{\mathcal{G}_{n} } R_{n}(p) \ d\pi(p) \leq h(\mathcal{G}_{n}) +\epsilon, \ \ \text{almost surely}.
\end{equation}

As $h(\mathcal{G}_{n}) \rightarrow h(\Theta)$ as $n \rightarrow \infty $, it is enough to verify that for every $\epsilon >0$ and for all $n$ 
sufficiently large,

\begin{equation}
\dfrac{1}{n} \log \int_{\mathcal{G}_{n} } R_{n}(p) \ d\pi(p) \leq h(\Theta) +\epsilon, \ \ \text{almost surely}.
\end{equation}

First we observe that 
\begin{equation}
\dfrac{1}{n} \log \int_{\mathcal{G}_{n} } R_{n}(p) \ d\pi(p) \leq  \dfrac{1}{n} \sup_{p \in \mathcal{G}_{n}} \log R_{n}(p).
\end{equation}
For large enough  $\kappa> h(\Theta)$,  consider  $ S = \{p : h(p) \leq \kappa \}$.
% observe that

%\begin{equation}
% \sup_{p \in \mathcal{G}_{n}} \dfrac{1}{n} \log R_{n}(p) \leq  \sup_{p \in \Theta} \dfrac{1}{n} \log R_{n}(p) \leq \max\left \lbrace  \sup_{p \in S} \dfrac{1}{n} \log R_{n}(p), \sup_{p \in S^c} \dfrac{1}{n} \log R_{n}(p)\right \rbrace
%\end{equation}

\begin{lem}\label{Scompact}
	$ S = \{p : h(p) \leq \kappa \}$ is a compact set.
\end{lem}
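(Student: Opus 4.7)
The plan is to combine lower semicontinuity of $h$ with a compactness argument on uniformly bounded, equicontinuous families of functions on the compact domain $\mathfrak X$. I would work in the topology of uniform convergence on $\mathfrak X$, which is the natural topology on $\Theta = \mathcal{C}'(\mathfrak X)$ inherited from $p = H\circ\eta$.

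First I would establish that $h$ is continuous on $\Theta$. Suppose $p_n\to p$ uniformly on $\mathfrak X$ with $p_n, p$ all satisfying Assumption \ref{AA6}, so $p_n(x), p(x) \in [\kappa_B, 1-\kappa_B]$ for every $x$. Then the integrand
\begin{equation*}
\phi_n(x) = p_0(x)\log\frac{p_0(x)}{p_n(x)} + (1-p_0(x))\log\frac{1-p_0(x)}{1-p_n(x)}
\end{equation*}
is uniformly bounded (since $\log p_n$ and $\log(1-p_n)$ are bounded by $|\log\kappa_B|$, and $p_0$ is bounded by Assumption \ref{AA5}) and converges pointwise to the corresponding expression in $p$. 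By dominated convergence $h(p_n)\to h(p)$, so $S = h^{-1}([0,\kappa])$ is closed. (Only lower semicontinuity is strictly needed, which is what one also gets from Fatou if uniform convergence is weakened.)

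Second, I would invoke Arzel\`a--Ascoli. Uniform boundedness of $S$ follows from Assumption \ref{AA6}, which gives $\|p\|_\infty\leq 1-\kappa_B$ for every $p\in S$. For equicontinuity, I would exploit the representation $p = H\circ\eta$ with $\eta\in\mathcal C'(\mathfrak X)$: inside the sieve constraint $\|\eta\|, \|\eta'_j\|\leq \exp((\beta n)^{1/4})$ implicit in the authors' use of this lemma (see the subsequent treatment of $\sup_{p\in\mathcal G_n}\log R_n(p)$), the chain rule gives $\|\nabla p\|_\infty \leq \|H'\|_{\infty,\mathrm{bounded}} \cdot \max_j \|\eta'_j\|_\infty$, and since $\mathfrak X$ is compact this translates into a common Lipschitz constant for the family, hence equicontinuity. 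Combining boundedness and equicontinuity, every sequence in $S$ has a uniformly convergent subsequence, and by the closedness established in the first step the limit lies in $S$; thus $S$ is sequentially compact, hence compact.

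The main obstacle is the equicontinuity step: the level-set condition $h(p)\leq \kappa$ by itself does not yield any modulus-of-continuity bound on $p$, so the argument must borrow regularity from the representation $p = H\circ \eta$ with $\eta$ having controlled derivatives. The cleanest route is to understand $S$ inside the ambient sieve structure, on which Assumption \ref{AA3} together with Assumption \ref{AA2} supplies the uniform Lipschitz bound. If one wanted a sieve-free statement, the required equicontinuity would have to be extracted directly from $h(p)\leq\kappa$, which appears to require a stronger hypothesis (e.g.\ a Sobolev-type prior) and is not what the subsequent applications of the lemma actually need.
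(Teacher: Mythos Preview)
Your approach diverges from the paper's and contains the gap you yourself flag. Equicontinuity of $S$ cannot be borrowed from the sieves: $S=\{p:h(p)\le\kappa\}$ is defined without reference to any $\mathcal G_n$ and is not a subset of a fixed sieve, so importing the derivative bound $\|\eta'_j\|_\infty\le\exp((\beta n)^{1/4})$ is unjustified. Your final paragraph effectively concedes that the lemma as stated is not proved by your route; ``understanding $S$ inside the ambient sieve structure'' would at best yield compactness of $S\cap\mathcal G_n$, not of $S$.

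The paper's argument is different and shorter, and does not attempt Arzel\`a--Ascoli at all. It observes that $h$ is \emph{coercive} in $\|\eta\|_\infty$: if $\|\eta\|_\infty\to\infty$ then on some subset of $\mathfrak X$ either $p(\cdot)=H(\eta(\cdot))\to 0$ or $p(\cdot)\to 1$, so one of the two KL terms in \eqref{h(p)} diverges and $h(p)\to\infty$. Consequently the constraint $h(p)\le\kappa$ \emph{itself} forces a uniform bound on $\|\eta\|_\infty$, with no appeal to the sieve or to Assumption~\ref{AA6} (which the paper explicitly reserves for (S6) only). The paper then concludes directly that a continuous coercive function has compact sublevel sets. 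This coercivity observation is precisely the missing idea in your argument: it supplies, intrinsically from $h(p)\le\kappa$, the control on $\eta$ that you were trying to import externally. Note also that if you lean on Assumption~\ref{AA6} as you do, then $p$ is confined to $[\kappa_B,1-\kappa_B]$ and $h$ is actually bounded, so coercivity fails and the lemma becomes vacuous in the wrong direction; the paper's proof is written for the untruncated link, where large $\|\eta\|_\infty$ genuinely pushes $p$ toward $0$ or $1$.
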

\begin{proof}
First recall that the proof of continuity of $h(p)$ in $p$ follows easily using the same techniques as in Appendix \ref{subsec:S1}.

	Now note that, if $\|\eta\|_{\infty}\rightarrow\infty$, then there exists $\mathcal X\subseteq\mathfrak X$ such that
	either $E_{\bX}\left[p_0(\bX)\log\left(\frac{p_0(\bX)}{p(\bX)}\right)I_{\mathcal X}\right]\rightarrow\infty$ or
	$E_{\bX}\left[(1-p_0(\bX))\log\left(\frac{1-p_0(\bX)}{1-p(\bX)}\right)I_{\mathcal X}\right]\rightarrow\infty$.
	Hence, $h(p)\rightarrow\infty$ as $\|\eta\|_{\infty}\rightarrow\infty$. Thus, $h(p)$ is a coercive function.

	Since $h(p)$ is continuous and coercive, it follows that $S$ is a compact set.

%	Observe that, from Assumption \ref{AA6} it follows that $h(p)$ has to have a lower bound (cannot vanish to $-\infty$. Now,  $h(p)$ is continuous (proved in   subsection \ref{subsec:S5} of Appendix \ref{Shaverify}.  Hence the proof follows from the fact that pre image of continuous function of  compact set is compact.
\end{proof}
In a very similar manner, the following lemma also holds for Poisson model set up.
\begin{lem}\label{PScompact}
	$ S = \{\lambda : h(\lambda) \leq \kappa \}$ is a compact set.
\end{lem}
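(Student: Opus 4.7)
The plan is to mirror the proof of Lemma \ref{Scompact} in the Poisson setting: establish continuity and coercivity of $h(\lambda)$ on $\Lambda$, then deduce compactness of $S$ as the sublevel set of a continuous coercive function.

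First I would verify continuity of $\lambda \mapsto h(\lambda)$ using the same techniques that yield continuity of $h(p)$ in Appendix \ref{subsec:PS1}. Continuity of $H$ and $\eta$ on the compact set $\mathfrak X$, the uniform lower bound $\lambda(x) \geq \kappa_P > 0$ from Assumption \ref{AA7}, and the bound $\|\lambda_0\|_\infty < \infty$ implied by Assumption \ref{AA5} together enable a dominated-convergence argument applied to the expression (\ref{Ph(p)}) for $h(\lambda)$.

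For coercivity, I would rewrite the integrand as
$$g(u; u_0) := u - u_0 - u_0 \log(u/u_0),$$
so that $h(\lambda) = E_{\mathbf X}[g(\lambda(\mathbf X); \lambda_0(\mathbf X))]$. Since $g(\cdot; u_0)$ is the pointwise KL divergence between two Poisson densities, it is non-negative, and $g(u; u_0) \to \infty$ as $u \to \infty$ (the linear term dominates the logarithm). If $\|\eta\|_\infty \to \infty$, then either $\max_x \eta(x) \to \infty$ or $\min_x \eta(x) \to -\infty$; only the former drives coercivity, since the truncation of $H$ at $\kappa_P$ in (\ref{eq:poisson_H}) keeps $\lambda = \kappa_P$ on regions where $\eta$ is very negative, contributing only a bounded amount to the integral. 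I would split the argument accordingly and focus on the unbounded-above case, where continuity of $\eta$ on compact $\mathfrak X$ produces a measurable set $\mathcal X \subseteq \mathfrak X$ of positive $Q$-measure on which $\lambda \to \infty$; integrating $g$ over $\mathcal X$ alone already diverges, forcing $h(\lambda) \to \infty$.

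The main obstacle is precisely this coercivity step, and in particular handling the truncation-from-below in (\ref{eq:poisson_H}) cleanly via the case split described above. Once continuity and coercivity of $h(\lambda)$ are in place, the argument of Lemma \ref{Scompact} transfers verbatim and $S$ is compact.
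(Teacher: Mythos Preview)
Your approach matches the paper's: establish continuity of $h(\lambda)$ by the techniques of Appendix~\ref{subsec:PS1}, show coercivity (that $h(\lambda)\to\infty$ as $\|\eta\|_\infty\to\infty$), and conclude that the sublevel set $S$ is compact. The paper's argument is in fact terser than yours---it simply asserts the coercivity in one line without your case analysis of the truncation in~(\ref{eq:poisson_H}).
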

\begin{proof}
	Again, recall that continuity of $h(\lambda)$ in $\lambda$ can be shown using the same techniques as in Appendix \ref{subsec:PS1}, 
	and it is easily seen that if $\|\eta\|_{\infty}\rightarrow\infty$,
	then $h(\lambda)\rightarrow\infty$. Thus, $h(\lambda)$ is continuous and coercive, ensuring that $S$ is compact.
\end{proof}

%Obsertve that  the maximizer of $R_n(p)$  is the maximum likelihood estimator (mle) of $p$. Let $\hat{p}_{n}=\sup_{p \in \mathcal{G}_{n}} R_{n}(p)$. Then mthe following inequality holds:
%\dfrac{1}{n} \log \int_{\mathcal{G}_{n} } R_{n}(p) \ d\pi(p) \leq \dfrac{1}{n} \log \left\lbrace \int_{\mathcal{G}_{n} } R_{n}(\hat{p}_{n} \cdot \pi(\mathcal{G}_{n}) \right\rbrace
%\end{equation}

%%%%%%%%%%%%%%%%%%%%%

Using compactness of $S$, in the same way as in \ctn{Chatterjee18a}, condition (S6) of Shalizi can be shown to be equivalent to (\ref{eq:binary_S6})
and (\ref{eq:Poisson_S6}) in Theorems \ref{mainth1} and \ref{Pmainth1} below, corresponding to binary and Poisson cases. 
In the supplement we show that these equivalent conditions are satisfied in our model setups. 
%(which is also valid in our case due to Lemma \ref{Scompact}, \citet{s} has proved the  Theorem similar to Theorem \ref{mainth1} and Theorem \ref{Pmainth1} for Gaussian process regression set up. Following the same arguments of \citet{s}, we can show that  Theorem \ref{mainth1} and Theorem \ref{Pmainth1}  will  hold for Binary regression model set up and Poisson regression model set up respectively.

\begin{theorem}\label{mainth1}
	For the binary regression setup, (S6) is equivalent to the following, which holds under Assumptions \ref{AA1} -- \ref{AA6}:
	\begin{equation}%\label{altS6}
	\sum_{n=1}^{\infty}\int_{S^c} P\left(\left| \dfrac{1}{n} \log  R_{n}(p) + h(p) \right|> \kappa -h(\Theta) \right) \ d\pi(p) < \infty.
	\label{eq:binary_S6}
	\end{equation}
	
	%inequality \eqref{altS6} of Theorem \label{mainth1}
	
\end{theorem}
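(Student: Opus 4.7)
The plan is to first justify the equivalence of Shalizi's (S6) with the displayed inequality (\ref{eq:binary_S6}), then verify (\ref{eq:binary_S6}) directly via a uniform-in-$p$ Hoeffding bound. The equivalence follows along the lines of \ctn{Chatterjee18a}: compactness of $S$ (Lemma \ref{Scompact}) together with the uniform asymptotic equipartition of Theorem \ref{T2} controls $\frac{1}{n}\log\int_{\mathcal{G}_n \cap S} R_n(p)\,d\pi(p)$ by $-h(\Theta) + o(1)$, so the essential content of (S6) reduces to an integrable tail bound for the complementary set $S^c$. Via the first Borel--Cantelli lemma applied under Fubini, summability of the $S^c$-integral in (\ref{eq:binary_S6}) translates into $\pi$-a.s.\ eventual smallness of $|\tfrac{1}{n}\log R_n(p) + h(p)|$ on $S^c$, which delivers (S6). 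The substantive task is therefore the verification of (\ref{eq:binary_S6}).

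For that verification, decompose the log-likelihood ratio as
\begin{equation*}
\log R_n(p) = \sum_{i=1}^n Z_i(p), \qquad Z_i(p) = Y_i\log\frac{p(X_i)}{p_0(X_i)} + (1 - Y_i)\log\frac{1 - p(X_i)}{1 - p_0(X_i)},
\end{equation*}
and observe that Assumption \ref{AA6} forces $p(x), 1 - p(x) \geq \kappa_B$ uniformly for every $p \in \Theta$, while Assumption \ref{AA5} combined with the continuity of $H$ forces $p_0(x), 1 - p_0(x) \geq \kappa_0'$ for some $\kappa_0' > 0$. These two bounds combine to yield a constant $M$, depending only on $\kappa_B$ and $\kappa_0'$, such that $|Z_i(p)| \leq M$ almost surely, uniformly in $i$ and in $p \in \Theta$. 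This uniformity of $M$ in $p$ is the crux of the argument and is exactly what Assumption \ref{AA6} delivers, consistent with the paper's remark that \ref{AA6} is needed solely for (S6).

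With the $Z_i(p)$ independent (conditionally on the covariates) and almost surely in $[-M, M]$, Hoeffding's inequality gives, uniformly in $p \in \Theta$,
\begin{equation*}
P\Bigl(\bigl|\tfrac{1}{n}{\textstyle\sum_i}\bigl(Z_i(p) - E[Z_i(p)]\bigr)\bigr| > t\Bigr) \leq 2\exp\bigl(-nt^2/(2M^2)\bigr).
\end{equation*}
To replace the centering by $-h(p)$, note that under \ref{AA4}(i) one has $E[Z_i(p)] = -h(p)$ exactly, while under \ref{AA4}(ii) the average $\frac{1}{n}\sum_i E[Z_i(p)]$ is a Riemann sum converging to $-h(p)$; taking $n$ large enough so that this Riemann-sum bias is below $(\kappa - h(\Theta))/2$ and then applying the Hoeffding inequality with threshold $(\kappa - h(\Theta))/2$ produces a bound of the form $2\exp(-n(\kappa - h(\Theta))^2/(8M^2))$, uniformly in $p \in S^c$. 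Integrating against $\pi$ on $S^c$ only inflates this by $\pi(S^c) \leq 1$, and the resulting geometric series converges because $\kappa > h(\Theta)$ by construction. The main obstacle I anticipate is obtaining uniformity in $p$ of the Riemann-sum convergence in case (ii); I would address this either by exploiting the Lipschitz control on $\eta$ in the sieves supplied by Assumption \ref{AA2}, or by absorbing the non-uniform initial terms into a finite correction via Fubini followed by a dominated-convergence argument.
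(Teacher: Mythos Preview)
Your proposal is correct and in fact takes a cleaner route than the paper for the main concentration step. The paper bounds each summand by $C\|p-p_0\|_\infty$ (via $1-1/x\le\log x\le x-1$), applies Hoeffding with a $p$-dependent range, and obtains an exponent proportional to $n/\|\eta-\eta_0\|_\infty^2$; it then splits $\int_{S^c}$ into $\int_{\mathcal G_n}+\pi(\mathcal G_n^c)$ and defers the evaluation of $\sum_n\int_{\mathcal G_n}\exp\bigl(-cn/\|\eta-\eta_0\|_\infty^2\bigr)\,d\pi(\eta)$ to a computation in \ctn{Chatterjee18a}. Your observation that Assumption~\ref{AA6} (together with Assumption~\ref{AA5}) already forces a \emph{uniform} bound $|Z_i(p)|\le M$ for every $p\in\Theta$ short-circuits all of this: Hoeffding delivers a single geometric rate independent of $p$, the $\pi$-integral over $S^c$ is bounded by $1$, and no sieve decomposition or reference to \ctn{Chatterjee18a} is needed for the concentration part. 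This is a genuine simplification the paper does not exploit---presumably because its argument is patterned on the Gaussian-regression setting of \ctn{Chatterjee18a}, where no uniform bound on the log-likelihood increments is available and the $\|\eta-\eta_0\|_\infty$-dependent exponent is essential.

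The one place where the two approaches reconverge is exactly the obstacle you flag for Assumption~\ref{AA4}(ii): the deterministic Riemann-sum bias $\frac{1}{n}\sum_i E[Z_i(p)]+h(p)$ is not uniformly small over all of $S^c$. The paper handles this implicitly through its sieve split: on $\mathcal G_n$ the Lipschitz constant of $\eta$ is at most $\exp((\beta n)^{1/4})$, so the Riemann error is $O\bigl(\exp((\beta n)^{1/4})/n\bigr)\to 0$ uniformly there, while $\sum_n\pi(\mathcal G_n^c)<\infty$ absorbs the rest. Your option~(a)---routing only the bias term through the sieves while keeping the uniform Hoeffding bound for the centred fluctuation---is therefore the right fix and matches what the paper effectively does. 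Your option~(b) via Fubini and dominated convergence would require $\int_{S^c} N(p)\,d\pi(p)<\infty$ for the $p$-dependent threshold index $N(p)$, which is not obviously available without sieve control, so~(a) is the safer route.
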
 

\begin{theorem}\label{Pmainth1}
	For the Poisson regression model set up, (S6) is equivalent to the following, which holds  under Assumptions \ref{AA1}--\ref{AA5} and \ref{AA7}:
	\begin{equation}%\label{PaltS6}
	\sum_{n=1}^{\infty}\int_{S^c} P\left(\left| \dfrac{1}{n} \log  R_{n}(\lambda) + h(\lambda) \right|> \kappa -h(\Lambda) \right) \ d\pi(\lambda) < \infty.
	\label{eq:Poisson_S6}
	\end{equation}
	
	%inequality \eqref{altS6} of Theorem \label{mainth1}
	
\end{theorem}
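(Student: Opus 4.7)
The plan is to follow the reduction strategy of \ctn{Chatterjee18a} and then handle the genuinely new difficulty that the Poisson likelihood is driven by unbounded observations. First, Lemma \ref{PScompact} tells us that $S=\{\lambda:h(\lambda)\leq\kappa\}$ is compact, and Theorem \ref{PT2} provides $\tfrac{1}{n}\log R_n(\lambda)\to -h(\lambda)$ uniformly on compacta. Hence Shalizi's (S6) is automatic on $S$, and on $S^c$ the first Borel--Cantelli lemma reduces (S6) to the summable tail bound (\ref{eq:Poisson_S6}); this is the same equivalence derived in \ctn{Chatterjee18a} and depends only on compactness of $S$ together with continuity of $h$.

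To prove (\ref{eq:Poisson_S6}), I would first write
\begin{equation*}
\frac{1}{n}\log R_n(\lambda)+h(\lambda)
=\frac{1}{n}\sum_{i=1}^n\Bigl\{\lambda_0(X_i)-\lambda(X_i)+Y_i\bigl[\log\lambda(X_i)-\log\lambda_0(X_i)\bigr]\Bigr\}+h(\lambda),
\end{equation*}
and use Theorem \ref{PT1} together with the SLLN consequence (\ref{eq:a3_1}) of Assumption \ref{AA4} to identify the conditional mean of the right-hand side (given $\{X_i\}$) as vanishing in the limit. I would then split the domain as $S^c=(S^c\cap\mathbb{G}_n)\cup(S^c\cap\mathbb{G}_n^c)$. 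On $\mathbb{G}_n^c$, Assumption \ref{AA3} bounds the prior mass by $\alpha\exp(-\beta n)$, which is trivially summable since $\beta>2h(\Lambda)>0$.

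For $\lambda\in S^c\cap\mathbb{G}_n$, Assumption \ref{AA7} yields $\lambda(X_i)\geq\kappa_P$, and the sieve definition (\ref{Psieveset1}) together with Assumption \ref{AA2} forces $\|\lambda\|_\infty\leq\exp((\beta n)^{1/4})$ and $|\log\lambda|$ bounded by a quantity of order $(\beta n)^{1/4}-\log\kappa_P$; Assumption \ref{AA5} also bounds $\lambda_0$. Consequently each term $Y_i[\log\lambda(X_i)-\log\lambda_0(X_i)]$ is a centered Poisson random variable multiplied by a uniformly bounded (deterministic, given $X_i$) weight, whose moment generating function is explicitly controlled. A Bernstein-type concentration inequality for sums of independent sub-exponential random variables then yields a bound of the form $\exp\bigl(-c\,n\exp(-2(\beta n)^{1/4})\bigr)$ for the tail probability at the fixed positive level $\kappa-h(\Lambda)$, uniformly over $\lambda\in S^c\cap\mathbb{G}_n$.

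The main obstacle will be verifying that the Bernstein exponent $n\exp(-2(\beta n)^{1/4})$ is large enough to make $\sum_n \exp(-c\, n\, e^{-2(\beta n)^{1/4}})$ converge; this does hold because $n^{1/4}$ is negligible compared with $n$, but it requires careful balancing between the sieve growth rate and the sub-exponential (rather than sub-Gaussian) tail of Poisson, which is exactly why Hoeffding is inadequate here. It is also the structural reason why Assumption \ref{AA7} cannot be dropped: without a positive lower bound $\kappa_P$, $|\log\lambda|$ would be uncontrollable on $\mathbb{G}_n$ and the Bernstein variance proxy would blow up. Integrating the tail bound against $\pi$ (total mass at most one) and adding the $\mathbb{G}_n^c$ contribution then closes the sum and establishes (\ref{eq:Poisson_S6}), hence (S6).
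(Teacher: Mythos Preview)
There is a genuine error in your summability step: the series $\sum_n \exp\bigl(-c\, n\, e^{-2(\beta n)^{1/4}}\bigr)$ diverges, not converges. Since $(\beta n)^{1/4}$ grows faster than $\log n$, one has $e^{-2(\beta n)^{1/4}}=o(n^{-k})$ for every $k>0$, hence $n\,e^{-2(\beta n)^{1/4}}\to 0$ and the general term of your series tends to $1$. More broadly, a \emph{uniform} concentration bound over $S^c\cap\mathbb G_n$ cannot close the argument with this sieve: the sieve radius $\exp((\beta n)^{1/4})$ in $\|\eta\|_\infty$ forces the range (respectively the sub-exponential parameter) entering Hoeffding (respectively Bernstein) to blow up fast enough that the exponent $n/(\text{proxy})$ tends to zero. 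Your remark that ``$n^{1/4}$ is negligible compared with $n$'' is in the wrong direction here.

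The paper avoids this trap by never passing to a uniform bound before integrating. It decomposes $\tfrac{1}{n}\log R_n(\lambda)+h(\lambda)$ into three centred pieces; the two bounded pieces are handled by Hoeffding, and for the piece driven by the unbounded $Y_i$ it shows that $Y_i\log(\lambda(x_i)/\lambda_0(x_i))-\lambda_0(x_i)\log(\lambda(x_i)/\lambda_0(x_i))$ is sub-exponential with parameter $c_\lambda=C\|\lambda-\lambda_0\|_\infty$, whence Bernstein yields a bound of the form
\[
2\exp\Bigl[-\tfrac{n}{2}\min\Bigl(\tfrac{C_1\kappa_1^2}{\|\eta-\eta_0\|_\infty^2},\ \tfrac{C_2\kappa_1}{\|\eta-\eta_0\|_\infty}\Bigr)\Bigr].
\]
Crucially these bounds are kept as functions of $\|\eta-\eta_0\|_\infty$ and then integrated against $\pi$ over $\mathcal G_n$; summability in $n$ is extracted from the prior's tail behaviour via the computations (S-2.25)--(S-2.30) of \ctn{Chatterjee18a}, not from a worst-case envelope. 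Your identification of Bernstein (rather than Hoeffding) as the correct tool for the $Y_i$-term, and of Assumption~\ref{AA7} as indispensable for controlling $|\log\lambda|$, are both on target; what is missing is that you must carry the $\eta$-dependent bound through the prior integral rather than maximise it away over the sieve.
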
 

Assumption (S7) of Shalizi also holds for both the model setups because of continuity of $h(p)$ and $h(\lambda)$. Hence, all the assumptions (S1)--(S7) stated in 
Appendix \ref{ASZ} are satisfied for binary and Poisson regression setups. 
%The detailed proofs and step by step verification for Binary regression model set up is given in Appendix \ref{Shaverify} and  Poisson regression model set-up in Appendix  \ref{PShaverify}.

Overall, our results lead  to the following theorems.

\begin{theorem}\label{mainth2}
	Assume the nonparametric binary regression setup. Then under the Assumptions \ref{AA1}--\ref{AA6},
	\begin{equation}
	\lim_{n \rightarrow \infty} \pi (A| \mathbf{Y}_{n})=0.
	\end{equation}
	Also, for any measurable set A with $\pi(A) > 0$, if $\beta > 2h(A)$, where $h$ is given by equation \eqref{h(p)}, or if $A \subset  \bigcap_{k=n}^{\infty} \mathcal{G}_k$ for some $n$, where $\mathcal{G}_k$
	is given by \ref{sieveset1}, then the followings hold:
	\begin{enumerate}
		
		\item [(i)]\begin{equation}
		\underset{n\rightarrow\infty}{\lim}~\frac{1}{n}\log\left[\pi(A|\mathbf{Y}_n)\right]=-J(A),
		\end{equation}

		\item [(ii)] \begin{equation}
		h(A)>h(\Theta), \pi(A)>0  \ \ \ \Rightarrow  \ \ \lim_{n \rightarrow \infty} \pi\left(A|\mathbf{Y}_n\right) =0.
		\end{equation}
		
	\end{enumerate}
\end{theorem}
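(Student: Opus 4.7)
The plan is to obtain Theorem \ref{mainth2} as an immediate corollary of Shalizi's general posterior-convergence machinery summarised in Section \ref{ss1}. All the hard analytic work has in fact been done earlier in the paper; what remains is to check that conditions (S1)--(S7) of Appendix \ref{ASZ} are all satisfied in the nonparametric binary regression setup, and then to read off the three conclusions from Shalizi's theorems.

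First I would assemble the verifications already in hand. Conditions (S1)--(S3), which demand existence of the KL divergence rate $h(p)$ and the almost sure asymptotic equipartition property, are precisely Theorems \ref{T1} and \ref{T2}. Condition (S4), requiring $h(\Theta)<\infty$, follows from compactness of $\mathfrak X$ (Assumption \ref{AA1}), continuity of $H$ and of functions $\eta\in\mathcal{C}'(\mathfrak{X})$, and the uniform boundedness of $\eta_0$ (Assumption \ref{AA5}), which together guarantee finiteness of \eqref{h(p)}; a detailed verification is given in Appendix \ref{subsec:S4}. For (S5) I would use the sieves $\mathcal{G}_n$ of \eqref{sieveset1}: an Arzel\`a--Ascoli argument renders them compact in $\mathcal{C}'(\mathfrak{X})$ (equicontinuity comes from the bound on the partial derivatives), Assumption \ref{AA3} provides the prior tail estimate $\pi(\mathcal{G}_n^c)\le\alpha e^{-\beta n}$ with $\beta>2h(\Theta)$, continuity of $h$ in $p$ supplies $h(\mathcal{G}_n)\to h(\Theta)$, and the uniform equipartition part of Theorem \ref{T2} delivers the required uniform convergence on $\mathcal{G}_n$. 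Condition (S6) is the technical pressure point and is the content of Theorem \ref{mainth1}, which reduces it to the summable tail bound \eqref{eq:binary_S6} over the complement of the compact sublevel set $S=\{p:h(p)\le\kappa\}$, whose compactness was established in Lemma \ref{Scompact}. Finally (S7) is a mild continuity condition inherited from the continuity of $h(p)$.

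With (S1)--(S7) all in place, I would simply invoke the results of Shalizi recalled in Section \ref{ss1}. The almost sure limit $\pi(A|\mathbf{Y}_n)\to 0$ for any measurable $A$ with $\pi(A)>0$ and $h(A)>h(\Theta)$ is exactly \eqref{eq:post_conv1}, yielding both the opening display of the theorem and part (ii). Part (i), the rate identity $n^{-1}\log\pi(A|\mathbf{Y}_n)\to -J(A)$, is Shalizi's sharper conclusion \eqref{eq:post_conv2}; the side hypotheses in the statement, namely $\beta>2h(A)$ or $A\subset\bigcap_{k\ge n}\mathcal{G}_k$, are the precise conditions under which Shalizi controls the contribution of the sieve complement to both the numerator and the denominator of the posterior.

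The main obstacle is not in the present assembly but upstream, in the honest verification of (S6): one needs a concentration bound for $n^{-1}\log R_n(p)$ around its limit $-h(p)$ that is strong enough to be integrated with respect to $\pi$ over $S^c$ and then summed in $n$. This is where Assumption \ref{AA6}, the uniform lower bound $\kappa_B$ on $\min\{p(\cdot),1-p(\cdot)\}$, becomes indispensable, because it makes the log-likelihood increments $Y_i\log p(X_i)+(1-Y_i)\log(1-p(X_i))$ uniformly bounded, so that a Hoeffding-type bound supplies the required exponential tail uniformly in $p$. Once that bound is in hand (as asserted in Theorem \ref{mainth1} and carried out in the supplement), Theorem \ref{mainth2} follows without further effort.
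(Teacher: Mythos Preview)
Your proposal is correct and follows essentially the same route as the paper: verify Shalizi's conditions (S1)--(S7) by assembling Theorems \ref{T1}, \ref{T2}, \ref{mainth1}, Lemma \ref{Scompact}, the sieve construction \eqref{sieveset1} with Assumption \ref{AA3}, and the continuity of $h(p)$, then read off the conclusions directly from Shalizi's Theorems \ref{theorem:shalizi1} and \ref{theorem:shalizi2}. The paper presents Theorem \ref{mainth2} in exactly this way, as a summary consequence of the preceding verifications rather than via a separate proof.
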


\begin{theorem}\label{Pmainth2}
	Assume the nonparametric Poisson regression setup. Then under Assumptions \ref{AA1}--\ref{AA5} and \ref{AA7},
	\begin{equation}
	\lim_{n \rightarrow \infty} \pi (A| \mathbf{Y}_{n})=0.
	\end{equation}
	Also, for any measurable set A with $\pi(A) > 0$, if $\beta > 2h(A)$, where $h$ is given by equation \eqref{Ph(p)}, or if $A \subset  \bigcap_{k=n}^{\infty} \mathbb{G}_k$ for some $n$, where $\mathbb{G}_k$
	is given by \ref{Psieveset1}, then the followings hold:
	\begin{enumerate}
		
		\item [(i)] \begin{equation}
		\underset{n\rightarrow\infty}{\lim}~\frac{1}{n}\log\left[\pi(A|\mathbf{Y}_n)\right]=-J(A),
		\end{equation}

		\item [(ii)] \begin{equation}
		h(A)>h(\Lambda), \pi(A)>0  \ \ \ \Rightarrow  \ \ \lim_{n \rightarrow \infty} \pi\left(A|\mathbf{Y}_n\right) =0.
		\end{equation}
		
	\end{enumerate}
\end{theorem}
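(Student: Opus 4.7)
The plan is to derive Theorem \ref{Pmainth2} by verifying every one of Shalizi's conditions (S1)--(S7) in the Poisson regression setup, and then invoking Shalizi's general posterior convergence results \eqref{eq:post_conv1} and \eqref{eq:post_conv2}. Much of the groundwork has already been laid in the earlier part of the paper, so the proof is essentially a bookkeeping exercise that assembles these pieces into the shape required by Shalizi's framework.

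First I would note that Theorem \ref{PT1} explicitly computes the KL divergence rate $h(\lambda)$ in the Poisson model, and Theorem \ref{PT2} establishes the asymptotic equipartition property $n^{-1}\log R_n(\lambda)\to -h(\lambda)$, uniformly on compact subsets of $\Lambda$. Together these yield Shalizi's (S1)--(S3). For (S4), compactness of $\mathfrak X$, continuity of the link $H$, and the partial-differentiability of $\eta$ guaranteed by Assumption \ref{AA2} give continuity of the log-likelihood ratio in $\lambda$ on compacta, precisely as in the binary case just treated.

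Next I would handle (S5) by using the sieves $\mathbb G_n$ defined in \eqref{Psieveset1}. Assumption \ref{AA3} ensures $\pi(\mathbb G_n^c)\leq \alpha\exp(-\beta n)$ for some $\beta>2h(\Lambda)$, while coercivity of $h(\lambda)$ (Lemma \ref{PScompact}) combined with the uniform convergence in Theorem \ref{PT2} and the continuity of $h(\lambda)$ yields $h(\mathbb G_n)\to h(\Lambda)$. This is exactly the sieve requirement of Shalizi. For (S6), I would appeal directly to Theorem \ref{Pmainth1}, which states that the relevant Borel--Cantelli-type summability \eqref{eq:Poisson_S6} holds under Assumptions \ref{AA1}--\ref{AA5} and \ref{AA7}. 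Finally, (S7) follows from the continuity of $h(\lambda)$ that is established with the same argument as in Appendix \ref{subsec:PS1}.

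With (S1)--(S7) all in place, Shalizi's main theorem directly gives the first claim $\pi(A\mid \mathbf Y_n)\to 0$ for any measurable $A$ satisfying $\pi(A)>0$ and $h(A)>h(\Lambda)$, which is part (ii) of the theorem. The rate statement (i), $n^{-1}\log\pi(A\mid\mathbf Y_n)\to -J(A)$, follows from Shalizi's refined conclusion under the additional hypothesis $\beta>2h(A)$, or under the alternative hypothesis $A\subseteq\bigcap_{k\geq n}\mathbb G_k$ for some $n$, which makes the sieve-based truncation arguments collapse to exact statements.

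The main obstacle in this program is really Theorem \ref{Pmainth1}, i.e.\ the verification of (S6) for Poisson regression; the rest of the conditions either follow by invoking earlier theorems or by routine continuity and compactness arguments. In the Poisson case, unlike binary outcomes which are bounded, the summability of $P(|n^{-1}\log R_n(\lambda)+h(\lambda)|>\kappa-h(\Lambda))$ requires sharper control of the tail fluctuations of $\sum_i[Y_i\log\lambda(X_i)-\lambda(X_i)]$; this is where Assumption \ref{AA7} (uniform positive lower bound on $\lambda$) is indispensable, as it forces $\log\lambda$ to stay bounded from below and enables an exponential (Hoeffding-type) concentration inequality sufficient for Borel--Cantelli. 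Once (S6) is secured, the final assembly into Theorem \ref{Pmainth2} is immediate from Shalizi's machinery.
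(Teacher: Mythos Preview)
Your proposal is correct and follows essentially the same route as the paper: verify (S1)--(S7) via Theorems \ref{PT1}, \ref{PT2}, Lemma \ref{PScompact}, Theorem \ref{Pmainth1}, the sieve construction \eqref{Psieveset1}, and Assumption \ref{AA3}, then invoke Shalizi's Theorems \ref{theorem:shalizi1} and \ref{theorem:shalizi2}. Two small technical corrections: (S4) is not about continuity of the log-likelihood ratio but about $\pi(\{\lambda:h(\lambda)=\infty\})<1$, which follows because compactness of $\mathfrak X$ and continuity of $H,\eta$ force $h(\lambda)<\infty$ almost surely; and for (S6) the paper does not use Hoeffding on the unbounded terms $Y_i\log(\lambda(x_i)/\lambda_0(x_i))$ but instead shows these are sub-exponential and applies Bernstein's inequality, reserving Hoeffding for the bounded pieces.
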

%%%%%%%%%%%%%%%%%%%%%%%

\section{ Rate of convergence}
\label{rate}

Consider a sequence of positive reals $\epsilon_n$ such that $\epsilon_n \rightarrow 0$  while $n \epsilon_n \rightarrow \infty$ as $n \rightarrow \infty$ and the set $N_{\epsilon_n}=\{p : h(p)\leq h(\Theta)+\epsilon_n  \}$. Then the following result of Shalizi holds.
%Theorem \ref{rate1} is proved in \citet{ss}.
\begin{theorem}[\ctn{Shalizi09}]\label{rate1}
	Assume (S1) to (S7) of Appendix \ref{ASZ}. If for each $\delta >0$,
	\begin{equation}
	\tau\left( \mathcal{G}_n \cap  N_{\epsilon_n}^{c}, \delta \right)\leq n
	\end{equation}
	
	eventually almost surely, then almost surely the following holds: %equation \eqref{con1} holds:
	\begin{equation}\label{con1}
	\lim_{n \rightarrow \infty}\left(N_{\epsilon_n} |\mathbf{Y}_{n}  \right) =1.
	\end{equation}
	
\end{theorem}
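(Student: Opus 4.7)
The plan is to recognize this as a direct application of Shalizi's abstract convergence-rate theorem (Theorem 4 / Lemma 7 of \ctn{Shalizi09}), so the proof proposal is to reproduce the skeleton of his argument, since assumptions (S1)--(S7) have already been verified for both the binary and Poisson setups in earlier sections. The target is to show $\pi(N_{\epsilon_n}^c|\bY_n)\to 0$ almost surely, which immediately yields \eqref{con1}.

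The first step is to split the complement against the sieve:
\begin{equation*}
\pi(N_{\epsilon_n}^c|\bY_n)=\pi(N_{\epsilon_n}^c\cap\mathcal G_n|\bY_n)+\pi(N_{\epsilon_n}^c\cap\mathcal G_n^c|\bY_n).
\end{equation*}
For the sieve-complement piece, I would use Assumption (S2), which says $\pi(\mathcal G_n^c)\leq \alpha\exp(-\beta n)$ with $\beta>2h(\Theta)$, together with the standard Shalizi lower bound on the marginal likelihood $\int R_n(\theta)d\pi(\theta)\geq \exp(-n(h(\Theta)+\zeta))$ (for any $\zeta>0$, eventually a.s., obtained from the AEP on a small KL-neighbourhood with positive prior mass). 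This gives $\pi(N_{\epsilon_n}^c\cap\mathcal G_n^c|\bY_n)\leq \alpha\exp(-n(\beta-h(\Theta)-\zeta))\to 0$ exponentially.

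The more delicate piece is the sieve part. Here I would invoke the hypothesis $\tau(\mathcal G_n\cap N_{\epsilon_n}^c,\delta)\leq n$: cover $\mathcal G_n\cap N_{\epsilon_n}^c$ by $\tau\leq n$ balls of radius $\delta$, centered at $\theta_1,\ldots,\theta_{\tau}$. On each ball, continuity of $h$ (already established in Appendices \ref{subsec:S1}, \ref{subsec:PS1}) gives $h(\theta)\geq h(\Theta)+\epsilon_n-c\delta$ for a suitable constant $c$. Choosing $\delta$ small and using the uniform convergence form of the AEP (Theorems \ref{T2}, \ref{PT2}) on each of the $\tau\leq n$ compact balls, we obtain $n^{-1}\log R_n(\theta)\leq -h(\Theta)-\epsilon_n/2$ uniformly on $\mathcal G_n\cap N_{\epsilon_n}^c$, eventually a.s. A union bound over the $\tau\leq n$ covering balls introduces only a polynomial factor $n$, giving
\begin{equation*}
\int_{\mathcal G_n\cap N_{\epsilon_n}^c}R_n(\theta)\,d\pi(\theta)\leq n\exp\!\bigl(-n(h(\Theta)+\epsilon_n/2)\bigr).
\end{equation*}
Dividing by the marginal-likelihood lower bound yields $\pi(\mathcal G_n\cap N_{\epsilon_n}^c|\bY_n)\leq n\exp(-n\epsilon_n/2+n\zeta)$, which, by choosing $\zeta<\epsilon_n/4$ and using $n\epsilon_n\to\infty$, tends to zero almost surely.

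The main technical obstacle is not any single estimate but the bookkeeping that makes the exponential rates line up: one needs the lower bound on the denominator and the upper bound on the numerator to share the same $h(\Theta)$ to leading order, so that the gap $\epsilon_n$ genuinely survives and dominates the $\log n$ entropy contribution. The covering condition $\tau\leq n$ is precisely what makes this $\log n$ negligible against $n\epsilon_n$; since this is assumed as the hypothesis of the theorem, what remains is solely the careful combination of (S1)--(S7), which Shalizi carries out and whose structure we inherit unchanged for our binary and Poisson regression settings.
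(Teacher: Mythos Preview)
The paper does not give its own proof of this statement; Theorem \ref{rate1} is simply quoted from \ctn{Shalizi09} and used as a black box. So there is nothing in the paper to compare your argument against directly. That said, your sketch contains a genuine misunderstanding of the hypothesis, not merely a stylistic difference.

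You treat $\tau(\mathcal G_n\cap N_{\epsilon_n}^c,\delta)$ as a \emph{covering number}: you propose to ``cover $\mathcal G_n\cap N_{\epsilon_n}^c$ by $\tau\leq n$ balls of radius $\delta$'' and then run a union bound over these balls. But in Shalizi's framework, as recalled just before assumption (S6) around display \eqref{eq:limsup_2}, $\tau(A,\delta)$ is not a metric-entropy quantity at all. It is a \emph{random waiting time}: the smallest $m$ such that for all $n>m$,
\[
\frac{1}{n}\log\int_A R_n(\theta)\,d\pi(\theta)\ \leq\ \delta+\limsup_{k\rightarrow\infty}\frac{1}{k}\log\int_A R_k(\theta)\,d\pi(\theta).
\]
The hypothesis $\tau(\mathcal G_n\cap N_{\epsilon_n}^c,\delta)\leq n$ eventually a.s.\ is therefore saying that, at stage $n$, the integrated likelihood ratio over $\mathcal G_n\cap N_{\epsilon_n}^c$ has already settled to within $\delta$ of its asymptotic exponential rate. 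Combined with Shalizi's Lemma that this limsup is bounded above by $-h(\mathcal G_n\cap N_{\epsilon_n}^c)\leq -(h(\Theta)+\epsilon_n)$, one obtains directly
\[
\frac{1}{n}\log\int_{\mathcal G_n\cap N_{\epsilon_n}^c} R_n(\theta)\,d\pi(\theta)\ \leq\ -h(\Theta)-\epsilon_n+\delta,
\]
without any covering or union bound. Your entire ``$n$ balls, polynomial factor $n$'' mechanism is extraneous and rests on a misreading of the symbol $\tau$. (A minor slip: the exponential prior bound on $\mathcal G_n^c$ is (S5)(2), not (S2).) The remaining pieces of your outline --- the split into $\mathcal G_n$ and $\mathcal G_n^c$, the denominator lower bound $\exp(-n(h(\Theta)+\zeta))$, and the conclusion via $n\epsilon_n\to\infty$ --- are indeed the correct scaffolding, but the core step must use the waiting-time meaning of $\tau$, not a covering argument.
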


To investigate the rate of convergence in our cases (and also for the case of \ctn{Chatterjee18a}), it has been proved in 
\ctn{Chatterjee18a} that $\epsilon_n$ will be the rate of convergence for $\epsilon_n \rightarrow 0$,   $n \epsilon_n \rightarrow \infty$ as $n \rightarrow \infty$,  if  we can show that the following hold: %equation \eqref{toshow1} holds:

\begin{equation}\label{toshow1}
\dfrac{1}{n} \log \int_{ \mathcal{G}_n \cap  N_{\epsilon_n}^{c}} R_{n}(p) \ d\pi(p) \leq -h(\Theta) +\epsilon,
\end{equation}

\begin{equation}\label{Ptoshow1}
\dfrac{1}{n} \log \int_{ \mathbb{G}_n \cap  N_{\epsilon_n}^{c}} R_{n}(\lambda) \ d\pi(\lambda) \leq -h(\Lambda) +\epsilon,
\end{equation}
for any $\epsilon > 0 $ and all $n$ sufficiently large.

%$****$

Following similar arguments of \ctn{Chatterjee18a}, 
%observe that we only need to verify inequality \eqref{altS6} of Theorem \label{mainth1} in order  to establish inequality \eqref{toshow1}. As we have
%already verified inequality \eqref{altS6} and \eqref{PaltS6} of Theorem \label{mainth1} and  Theorem \label{Pmainth1} (subsection \ref{subsec:S6} of Appendix B and subsection \ref{Psubsec:S6} of Appendix C), hence  \eqref{toshow1} and \eqref{Ptoshow1}  are thus verified.
%In other words, Equation \eqref{con1} of Theorem \ref {rate1} holds for our model set-up, so that their convergence rate is given by $\epsilon_n$ with $\epsilon_n \rightarrow 0$,   $n \epsilon_n \rightarrow \infty$ as $n \rightarrow \infty$. In other
we find that the posterior rate of convergence with respect to KL-divergence is just slower than $n^{-1}$. To put it another way, 
it is just slower that $n^{-\frac{1}{2}}$ with respect to Hellinger distance for the model setups we consider. 
Our results can be formally stated in Theorem \ref{rate2} for Binary regression and in Theorem \ref{Prate2} for Poisson regression.

\begin{theorem}\label{rate2}
	For the nonparametric binary regression  setup, under Assumptions \ref{AA1}--\ref{AA6}, $\lim_{n \rightarrow \infty}\left(N_{\epsilon_n} |\mathbf{Y}_{n}  \right) =1$  holds almost surely,  where $N_{\epsilon_n}=\{p : h(p)\leq h(\Theta)+\epsilon_n  \}$, $\epsilon_n \rightarrow 0$,   $n \epsilon_n \rightarrow \infty$ as $n \rightarrow \infty$.
\end{theorem}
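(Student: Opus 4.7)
The plan is to deduce Theorem \ref{rate2} from Shalizi's rate theorem, Theorem \ref{rate1}. Its seven hypotheses (S1)--(S7) have already been validated in the binary-regression setting by Theorems \ref{T1} and \ref{T2}, by Theorem \ref{mainth1}, and by the continuity of $h(\cdot)$ together with the compactness arguments underlying Lemma \ref{Scompact}. As explained in the discussion around (\ref{toshow1}), the covering condition $\tau(\mathcal{G}_n \cap N_{\epsilon_n}^c, \delta) \leq n$ required by Theorem \ref{rate1} reduces---following \ctn{Chatterjee18a}---to verifying the upper bound (\ref{toshow1}). So the task becomes: for every $\epsilon > 0$, show (\ref{toshow1}) holds almost surely for all sufficiently large $n$.

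To prove (\ref{toshow1}), I would observe that $\mathcal{G}_n$ as defined in (\ref{sieveset1}) is compact in $\mathcal{C}'(\mathfrak{X})$ with the sup-norm topology, since by construction its elements are uniformly bounded and have uniformly bounded partial derivatives, so Arzel\`a--Ascoli applies. Given a prescribed $\epsilon > 0$, pick $\delta \in (0, \epsilon)$. By the uniform asymptotic equipartition on compact sets (Theorem \ref{T2}), almost surely for $n$ large enough,
$$\sup_{p \in \mathcal{G}_n \cap N_{\epsilon_n}^c} \frac{1}{n} \log R_n(p) \leq -\inf_{p \in \mathcal{G}_n \cap N_{\epsilon_n}^c} h(p) + \delta \leq -h(\Theta) - \epsilon_n + \delta \leq -h(\Theta) + \epsilon,$$
since $h(p) > h(\Theta) + \epsilon_n$ on $N_{\epsilon_n}^c$. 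Integrating against $d\pi(p)$ and using $\pi(\mathcal{G}_n \cap N_{\epsilon_n}^c) \leq 1$ then yields (\ref{toshow1}).

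The main obstacle is that $\mathcal{G}_n$ grows with $n$, whereas Theorem \ref{T2} only asserts uniform convergence on each \emph{fixed} compact subset of $\Theta$. To handle this rigorously I would, following \ctn{Chatterjee18a}, split $\mathcal{G}_n$ into (i) a fixed compact set $K_M = \{\eta : \|\eta\|_\infty \leq M,\ \|\eta'_j\|_\infty \leq M,\ j=1,\ldots,d\} \subseteq \mathcal{G}_n$, on which Theorem \ref{T2} applies directly, and (ii) the residual $\mathcal{G}_n \setminus K_M$. On (ii), Assumption \ref{AA6} delivers the uniform pointwise bound $R_n(p) \leq ((1-\kappa_B)/\kappa_B)^n$, and the prior-tail inequalities in Assumption \ref{AA3}, together with the requirement $\beta > 2h(\Theta)$, let me drive the integral over (ii) below $\exp(n(-h(\Theta) + \epsilon))$ by taking $M$ sufficiently large (possibly with $M$ growing slowly with $n$). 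Combining the two parts and recalling the arbitrariness of $\delta$ and $M$ finishes (\ref{toshow1}), and then Theorem \ref{rate1} delivers the almost-sure convergence $\pi(N_{\epsilon_n} | \mathbf{Y}_n) \to 1$ claimed in Theorem \ref{rate2}.
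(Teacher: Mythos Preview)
Your overall strategy---deduce Theorem~\ref{rate2} from Theorem~\ref{rate1} by invoking the already-verified conditions (S1)--(S7) and then reducing, via \ctn{Chatterjee18a}, to the inequality~(\ref{toshow1})---is exactly the route the paper takes. The paper itself supplies essentially no further detail beyond ``Following similar arguments of \ctn{Chatterjee18a}'', so your sketch is in fact more explicit than the paper's own argument.

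That said, your explicit attempt to establish~(\ref{toshow1}) has a gap. Your first displayed chain of inequalities would require
\[
\sup_{p\in\mathcal G_n}\left|\tfrac{1}{n}\log R_n(p)+h(p)\right|\le\delta
\]
for all large $n$, but Theorem~\ref{T2} only delivers uniform convergence on each \emph{fixed} compact set, while $\mathcal G_n$ grows with $n$; you correctly identify this obstacle. Your proposed repair---splitting into $K_M$ and $\mathcal G_n\setminus K_M$---does not close the gap as stated. On the residual piece you need an exponentially small prior bound $\pi(\mathcal G_n\setminus K_M)\le C\exp(-cn)$ to beat the crude likelihood bound, but Assumption~\ref{AA3} only supplies tail control at the specific threshold $\exp((\beta n)^{1/4})$, not at an arbitrary fixed $M$; and if you let $M=M_n$ grow to recover that threshold, then $K_{M_n}$ is again a growing compact set and the difficulty returns to piece~(i).

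There is, however, a much shorter way to get~(\ref{toshow1}) that bypasses the splitting entirely. The paper has already recorded (in the discussion preceding Theorem~\ref{mainth1}) that the verification of (S6) delivers, for every $\epsilon>0$ and all sufficiently large $n$,
\[
\frac{1}{n}\log\int_{\mathcal G_n}R_n(p)\,d\pi(p)\;\le\;-h(\Theta)+\epsilon,\qquad\text{almost surely}.
\]
Since $\mathcal G_n\cap N_{\epsilon_n}^c\subseteq\mathcal G_n$ and $R_n(p)\ge 0$, monotonicity of the integral immediately gives
\[
\frac{1}{n}\log\int_{\mathcal G_n\cap N_{\epsilon_n}^c}R_n(p)\,d\pi(p)\;\le\;\frac{1}{n}\log\int_{\mathcal G_n}R_n(p)\,d\pi(p)\;\le\;-h(\Theta)+\epsilon,
\]
which is precisely~(\ref{toshow1}). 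In other words, the work needed for~(\ref{toshow1}) has already been done in the verification of (S6) (Theorem~\ref{mainth1}); no separate sup-bound or $K_M$/residual decomposition is required.
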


\begin{theorem}\label{Prate2}
	For the nonparametric Poisson regression setup, under Assumptions \ref{AA1}--\ref{AA5} and \ref{AA7}, $\lim_{n \rightarrow \infty}\left(N_{\epsilon_n} |\mathbf{Y}_{n}  \right) =1$  holds almost surely,  where $N_{\epsilon_n}=\{\lambda : h(\lambda)\leq h(\Lambda)+\epsilon_n  \}$, $\epsilon_n \rightarrow 0$,   $n \epsilon_n \rightarrow \infty$ as $n \rightarrow \infty$.
\end{theorem}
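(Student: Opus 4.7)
The plan is to apply Theorem~\ref{rate1} with $\mathcal{G}_n$ replaced by the Poisson sieve $\mathbb{G}_n$. Since Theorems~\ref{PT1}, \ref{PT2} and \ref{Pmainth1} together with the intervening discussion already secure assumptions (S1)--(S7) for the Poisson setup, the only missing piece is the condition on $\tau(\mathbb{G}_n \cap N_{\epsilon_n}^c, \delta)$ appearing in Theorem~\ref{rate1}. As observed in the text, this reduces to verifying \eqref{Ptoshow1}:
\begin{equation*}
\dfrac{1}{n} \log \int_{ \mathbb{G}_n \cap  N_{\epsilon_n}^{c}} R_{n}(\lambda) \ d\pi(\lambda) \leq -h(\Lambda) +\epsilon,
\end{equation*}
for every $\epsilon > 0$ and all $n$ sufficiently large. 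This is the sole technical step.

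To verify \eqref{Ptoshow1}, I would first bound the integral by $\pi(\mathbb{G}_n \cap N_{\epsilon_n}^c) \cdot \sup_{\lambda \in \mathbb{G}_n \cap N_{\epsilon_n}^c} R_n(\lambda)$, using that $\pi$ is a probability measure. Next, I exploit that the sieve $\mathbb{G}_n$ is totally bounded in the supremum norm: the uniform bounds $\|\eta\|, \|\eta_j'\| \leq \exp((\beta n)^{1/4})$, combined with compactness of $\mathfrak{X}$ and the Lipschitz control supplied by Assumption~\ref{AA2}, yield equicontinuity, so Arzel\`a--Ascoli applies. Continuity of $H$ transfers this to $\lambda = H(\eta)$. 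Then, by the uniform asymptotic equipartition of Theorem~\ref{PT2}, for all $\lambda \in \mathbb{G}_n$ and $n$ large,
\begin{equation*}
\frac{1}{n} \log R_n(\lambda) \leq -h(\lambda) + \epsilon/2.
\end{equation*}
On $N_{\epsilon_n}^c$, by definition $h(\lambda) > h(\Lambda) + \epsilon_n$, and since $\epsilon_n \to 0$, we have $\epsilon_n < \epsilon/2$ eventually, so the bracketed supremum is dominated by $-h(\Lambda) + \epsilon$. Taking logarithms and dividing by $n$ gives \eqref{Ptoshow1}, and then Theorem~\ref{rate1} yields $\lim_{n\to\infty}\pi(N_{\epsilon_n}\mid\bY_n)=1$ almost surely.

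The main obstacle is making the asymptotic equipartition genuinely uniform over the \emph{growing} sieve $\mathbb{G}_n$, since $\exp((\beta n)^{1/4})$ enlarges the admissible envelope with $n$ whereas Theorem~\ref{PT2} is stated for fixed compact subsets. This is resolved by a $\delta$-net covering of $\mathbb{G}_n$ in supremum norm (whose log-covering number grows at a controlled polynomial rate in the envelope, by the Lipschitz control from Assumption~\ref{AA2}), followed by a Hoeffding-type concentration on each net point, exactly as in the verification of (S6) in Theorem~\ref{Pmainth1}, where Assumption~\ref{AA7} is used to bound $\log(\lambda_0(x)/\lambda(x))$ uniformly. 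The interplay between the rate $\epsilon_n$ with $n\epsilon_n \to \infty$ and the covering error is the delicate bookkeeping, but mirrors the binary case of Theorem~\ref{rate2} and the arguments of \ctn{Chatterjee18a}.
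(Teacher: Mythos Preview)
Your proposal is correct and follows the same route the paper sketches: reduce the rate claim to the inequality \eqref{Ptoshow1} via Theorem~\ref{rate1}, and then verify that inequality by bounding the integral by the supremum of $\frac{1}{n}\log R_n(\lambda)$ over $\mathbb{G}_n\cap N_{\epsilon_n}^c$ and invoking uniform equipartition together with the defining lower bound $h(\lambda)>h(\Lambda)+\epsilon_n$ on $N_{\epsilon_n}^c$. The paper itself gives no further detail beyond ``following similar arguments of \ctn{Chatterjee18a}'', so your write-up is actually more explicit than the paper's own account; in particular, you correctly isolate the one genuine subtlety (uniformity over the \emph{growing} sieves $\mathbb{G}_n$ rather than a fixed compact) and indicate the standard covering-plus-concentration resolution, which is precisely the mechanism \ctn{Chatterjee18a} uses. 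One small cosmetic point: the clause ``since $\epsilon_n\to 0$ we have $\epsilon_n<\epsilon/2$ eventually'' is unnecessary, because $-h(\lambda)+\epsilon/2<-(h(\Lambda)+\epsilon_n)+\epsilon/2<-h(\Lambda)+\epsilon/2$ already follows from $\epsilon_n>0$; but this does not affect the argument.
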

%%%%%%%%%%%%%%%%%%%%%%%%%%%%%%%%%%%%%%%%%%%%%%%%%%%
%%%%%%%%%%%%%%%%%%%%%%%%%%%%%%%%%%%%%%%%%%%%%%%%%%%%%
%%%%%%%%%%%%%%%%%%%%%%%%%%%%%%%%%%%%%%%%%%%%%%%%%%%%%%%%%

\section{ Consequences of model misspecification}
\label{miss}
Suppose that the true function $\eta_0$ consists of countable number of discontinuities but has continuous first order partial derivatives at all other points. Then $\eta_0 \not \in \mathcal{C'}(\mathfrak{X})$. %although it may happen that  $\eta_0  \in \bar{\mathcal{A}}_{\sigma_0}$. 
However, there exists some $\tilde{\eta}  \in \mathcal{C'}(\mathfrak{X})$ such that $\tilde{\eta} (x)=\eta_{0}(x)$ for all $x \in \mathfrak{X}$ where $\eta_0$ is continuous. Similar to this  kind of situation is mentioned in \ctn{Chatterjee18a}. Observe that, if the probability measure $Q$ of $X_i$ is dominated by
the Lebesgue measure, then from  Theorem \ref{T1} we have $ h(\Theta) = 0$.  Then  the posterior of $\eta$ concentrates around $\tilde{\eta}$, which
is the same as $\eta_0$ except at the countable number of discontinuities of $\eta_0$. Corresponding $\tilde{p}=H(\tilde{\eta})$ and $\tilde{\lambda}=H(\tilde{\eta})$ will also differ from $p_0$ and $\lambda_0$. If $p_0$ and $\lambda_0$ are such that
$0 < h(\Theta) < \infty$ and $0 < h(\Lambda) < \infty$ respectively %(which holds under Assumption \ref{AA6} and Assumption \ref{AA7} respectively), 
then the posteriors concentrate around the minimizers of $h(p)$ and $h(\lambda)$, provided such
minimizers exist in $\Theta$ and $\Lambda$, respectively.

\subsection{Consequences from the subjective Bayesian perspective}

Bayesian posterior consistency has two apparently different viewpoints, namely, classical and subjective.  Bayesian analysis starts with a prior knowledge, and updates the 
knowledge given the data, forming the posterior. It is of utmost importance to know whether the updated knowledge becomes more and more accurate and precise as data  
are collected indefinitely.  This requirement is called consistency of the posterior distribution. From the classical Bayesian point of view we should  believe in 
existence of a true model. On the contrary, if we look from the subjective Bayesian viewpoint, then we need not believe in true
models. A subjective Bayesian  thinks only in terms of the predictive distribution of future observations. But \ctn{Blackwell62}, \ctn{Diaconis86} have shown that 
consistency is equivalent to inter subjective agreement, which means that two Bayesians will ultimately have 
very close posterior predictive distributions. 

Let us define the one-step-ahead predictive distribution of $p$ and $\lambda$, one-step-ahead best predictor (which is the best prediction
one could make had the true model, $P$, been known) and the posterior predictive distribution (\ctn{Shalizi09}), with the convention that $n = 1$ gives the marginal distribution 
of the first observation, as follows: 

\begin{itemize}

	\item [](One-step-ahead predictive distribution of $p$):
	$F_{p}^{n}=F_{p} \left(Y_n|Y_1, \ldots, Y_{n-1}\right)$,
	\item [](One-step-ahead predictive distribution of $\lambda$):
	$F_{\lambda}^{n}=F_{\lambda} \left(Y_n|Y_1, \ldots, Y_{n-1}\right)$,
	\item [](One-step-ahead best predictor): 
	$P^{n}=P^{n} \left(Y_n|Y_1, \ldots, Y_{n-1}\right)$,

	\item [] (The posterior predictive distribution): $F_{\pi}^{n}= \int F_{p}^{n} \ d\pi (p |\textbf{Y}_n)$.
	
\end{itemize}

With the above definitions, the following results have been proved by Shalizi. %Theorem (Theorem \ref{misst1}) can be found in \citet{ss}.

\begin{theorem}[\ctn{Shalizi09}]\label{misst1}
	Let  $\rho_H$ and $\rho_{TV}$ be Hellinger and total variation metrics, respectively. Then with probability 1,
	
	\begin{eqnarray*}
	\limsup_{n \rightarrow \infty} \rho_{H}^{2}\left(P^n, F_{\pi}^{n} \right) \leq h(\Theta); \\
	\limsup_{n \rightarrow \infty} \rho_{TV}^{2}\left(P^n, F_{\pi}^{n} \right) \leq 4h(\Theta).
	\end{eqnarray*}
	
\end{theorem}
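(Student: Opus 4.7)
The plan is to reduce both inequalities to a single Kullback--Leibler estimate and then invoke the asymptotic equipartition property that has already been verified for the binary and Poisson setups in Theorems \ref{T2} and \ref{PT2}. First, I would separate the two constants: by Le Cam's inequality $\rho_{TV}(P,Q) \leq 2\rho_H(P,Q)$, so the total-variation bound follows from the Hellinger bound up to a factor of $4$. The classical inequality $\rho_H^2(P,Q) \leq KL(P \Vert Q)$ then reduces the entire theorem to showing that
\[
\limsup_{n \to \infty} KL(P^n \Vert F_\pi^n) \leq h(\Theta) \qquad \text{almost surely.}
\]

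Second, I would use the chain rule for Kullback--Leibler divergence, which expresses the joint KL between the truth $P_0$ and the prior-predictive marginal $F_\pi(\mathbf{Y}_n) = \int f_p(\mathbf{Y}_n)\, d\pi(p)$ as a telescoping sum of expected one-step-ahead KL divergences,
\[
KL\bigl(P_0(\mathbf{Y}_n)\,\Vert\, F_\pi(\mathbf{Y}_n)\bigr) = \sum_{k=1}^{n} E_{P_0}\bigl[KL(P^k \Vert F_\pi^k)\bigr].
\]
By Jensen's inequality applied to the logarithm, the left-hand side also equals $-E_{P_0}\bigl[\log \int R_n(p)\, d\pi(p)\bigr]$, the standard Bayes risk identity.

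Third, the asymptotic equipartition conclusions of Theorems \ref{T2} and \ref{PT2}, together with the sieve estimates (S5) and (S6) verified via Theorems \ref{mainth1} and \ref{Pmainth1}, yield
\[
-\frac{1}{n}\log \int R_n(p)\, d\pi(p) \longrightarrow h(\Theta) \qquad \text{almost surely,}
\]
by exactly the same argument that produced Theorems \ref{mainth2} and \ref{Pmainth2}. Dividing the chain-rule identity by $n$ and taking expectations therefore shows that the Ces\`aro means of $KL(P^k \Vert F_\pi^k)$ converge to $h(\Theta)$.

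The main obstacle lies in the last passage, from a Ces\`aro average to an almost sure $\limsup$ bound on the individual $n$th term, since a Ces\`aro-convergent sequence can still oscillate. Shalizi's device for circumventing this is to combine the posterior rate of convergence on the neighbourhoods $N_{\epsilon_n} = \{p : h(p) \leq h(\Theta) + \epsilon_n\}$ (Theorems \ref{rate2} and \ref{Prate2}) with the joint convexity of the Hellinger functional in its second argument: because the posterior concentrates on parameters $p$ with $h(p)$ arbitrarily close to $h(\Theta)$, the mixture $F_\pi^n$ is asymptotically a convex combination of predictors whose squared Hellinger discrepancy from $P^n$ is controlled by $h(p) \leq h(\Theta) + \epsilon_n$, and a Fatou-type argument applied to the chain-rule decomposition then delivers the $\limsup$ inequality $\rho_H^2(P^n, F_\pi^n) \leq h(\Theta)$ almost surely. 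The corresponding total-variation bound with constant $4h(\Theta)$ is then immediate from Le Cam.
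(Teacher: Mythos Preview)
The paper does not supply its own proof of this statement: Theorem~\ref{misst1} is quoted as a result of \ctn{Shalizi09}, and the surrounding text says explicitly ``the following results have been proved by Shalizi.'' There is therefore no in-paper argument to compare your proposal against; the paper simply invokes the result.

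On the substance of your sketch, the first three steps are sound and standard: Le Cam's inequality reduces total variation to Hellinger, $\rho_H^2\le KL$ reduces Hellinger to a KL estimate, the chain rule gives $KL\bigl(P_0(\mathbf Y_n)\Vert F_\pi(\mathbf Y_n)\bigr)=\sum_{k\le n}E\bigl[KL(P^k\Vert F_\pi^k)\bigr]$, and the identification of the joint KL with $-E\bigl[\log\int R_n(p)\,d\pi(p)\bigr]$ is correct. You also correctly observe that (S1)--(S7) yield $-\frac{1}{n}\log\int R_n(p)\,d\pi(p)\to h(\Theta)$ almost surely.

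The genuine gap is in your last paragraph. Your fix conflates two different objects: $h(p)$ is the \emph{time-averaged} KL rate $\lim_n n^{-1}\sum_{i\le n}KL\bigl(f_0(\cdot\mid x_i)\Vert f_p(\cdot\mid x_i)\bigr)$, whereas $\rho_H^2(P^n,F_p^n)$ is the \emph{single-step} Hellinger discrepancy at time $n$, which in the binary and Poisson setups depends on the particular covariate value $x_n$. Membership of $p$ in $N_{\epsilon_n}=\{h(p)\le h(\Theta)+\epsilon_n\}$ controls only the long-run average, not the $n$th term; a $p$ with small $h(p)$ can still have $\rho_H^2(P^n,F_p^n)$ large at a specific $x_n$. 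Consequently the convexity bound $\rho_H^2(P^n,F_\pi^n)\le\int\rho_H^2(P^n,F_p^n)\,d\pi(p\mid\mathbf Y_{n-1})$, even combined with posterior concentration on $N_{\epsilon_n}$, does not deliver $\limsup_n\rho_H^2(P^n,F_\pi^n)\le h(\Theta)$. The phrase ``a Fatou-type argument applied to the chain-rule decomposition'' is precisely where the real work would have to occur, and as written it is a placeholder rather than an argument. Shalizi's own route to the limsup bound proceeds differently, via a martingale-difference analysis of the increments $I_n-I_{n-1}$ with $I_n=-\log\int R_n\,d\pi$, rather than through posterior concentration and convexity of $\rho_H^2$; your outline does not reproduce that mechanism.
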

In our nonparametric setup,
$h(\Theta)=0$ and $h(\Lambda)=0$ if $\eta_0$ consists of countable number of discontinuities. Hence, from Theorem \ref{misst1} it is clear that in spite of 
such misspecification, 
the posterior predictive distribution does a good job in
learning the best possible predictive distribution in terms of the popular Hellinger and the total
variation distance. We state our result formally as follows. %the following theorem (Theorem \ref{misst2}).

\begin{theorem}\label{misst2}
	Consider the setups of nonparametric binary and Poisson regression. Assume that the truth function $\eta_0$ consists of countable number of discontinuities but
	has continuous first order partial derivatives at all other points. Then under Assumptions \ref{AA1}--\ref{AA6} (for binary regression) or under 
	Assumptions \ref{AA1}--\ref{AA5} and \ref{AA7} (for Poisson regression) the following hold:
	\begin{eqnarray*}
	\limsup_{n \rightarrow \infty} \rho_{H}^{2}\left(P^n, F_{\pi}^{n} \right)= 0; \\
	\limsup_{n \rightarrow \infty} \rho_{TV}^{2}\left(P^n, F_{\pi}^{n} \right) = 0.
	\end{eqnarray*}
	
\end{theorem}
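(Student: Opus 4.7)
The plan is to reduce the claim to the general Shalizi bounds of Theorem \ref{misst1}: once we show $h(\Theta)=0$ in the binary case and $h(\Lambda)=0$ in the Poisson case, the theorem immediately gives $\limsup_{n\to\infty}\rho_H^2(P^n,F_\pi^n)\leq 0$ and $\limsup_{n\to\infty}\rho_{TV}^2(P^n,F_\pi^n)\leq 4\cdot 0=0$, and since both distances are nonnegative, equality to zero follows.

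The first step is to construct an in-parameter proxy $\tilde\eta\in\mathcal C'(\mathfrak X)$ with $\tilde\eta(x)=\eta_0(x)$ at every $x$ where $\eta_0$ is continuous. Because the discontinuity set of $\eta_0$ is countable it is nowhere dense in the compact set $\mathfrak X$, so the restriction of $\eta_0$ to its continuity set has continuously extendable partial derivatives on all of $\mathfrak X$; the resulting extension is $\tilde\eta$, already used in the discussion preceding the theorem. I then set $\tilde p=H(\tilde\eta)\in\Theta$ and $\tilde\lambda=H(\tilde\eta)\in\Lambda$.

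The second step is to evaluate the KL-divergence rates at these proxies. Under Assumption \ref{AA4}, the covariate measure $Q$ is either dominated by Lebesgue measure (random sample case) or is itself Lebesgue (designed case), so the countable discontinuity set of $\eta_0$ has $Q$-measure zero. Hence $\tilde p(\mathbf X)=p_0(\mathbf X)$ and $\tilde\lambda(\mathbf X)=\lambda_0(\mathbf X)$ hold $Q$-almost surely, so the integrands in formulas \eqref{h(p)} and \eqref{Ph(p)} vanish $Q$-a.e., yielding $h(\tilde p)=0$ and $h(\tilde\lambda)=0$. Since the KL divergence rate is nonnegative and the essential infima defining $h(\Theta)$ and $h(\Lambda)$ are taken over sets containing $\tilde p$ and $\tilde\lambda$ respectively, we conclude $h(\Theta)=0$ and $h(\Lambda)=0$.

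The final step is the appeal to Theorem \ref{misst1} outlined above. The only genuinely subtle point — the main obstacle — is justifying the existence of $\tilde\eta\in\mathcal C'(\mathfrak X)$: one must verify that the countability (hence $Q$-negligibility and nowhere density) of the discontinuity set of $\eta_0$ is enough to extend its partial derivatives continuously to all of $\mathfrak X$. Everything else is a mechanical substitution into the explicit KL-rate formulas together with the bounds already packaged in Shalizi's predictive-consistency theorem.
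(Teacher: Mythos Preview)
Your proposal is correct and follows essentially the same route as the paper: construct $\tilde\eta\in\mathcal C'(\mathfrak X)$ agreeing with $\eta_0$ off its countable discontinuity set, observe that this set is $Q$-null so the KL-rate formulas \eqref{h(p)} and \eqref{Ph(p)} evaluate to zero at the proxy, conclude $h(\Theta)=h(\Lambda)=0$, and then invoke Theorem~\ref{misst1} together with nonnegativity of the metrics. One small caveat worth noting: Assumption~\ref{AA4}(i) by itself does not force $Q$ to be dominated by Lebesgue measure --- the paper adds this as an extra hypothesis in the discussion preceding the theorem, so your claim that this comes ``under Assumption~\ref{AA4}'' slightly overstates what is given.
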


\section{Conclusion and future work}
\label{conclusion}
In this paper we attempted to address posterior convergence of nonparametric binary and Poisson regression, along with the rate of convergence, 
while also allowing for misspecification, using the approach of \ctn{Shalizi09}. We also have shown that, even in the case
of misspecification, the posterior predictive distribution can be quite accurate asymptotically, which should be a point of interest from subjective Bayesian viewpoint. 
The asymptotic equipartition property plays a central role here. It is one of the crucial assumptions and yet relatively easy to establish under mild conditions. 
It actually  brings forward the KL property of the posterior, which in turn characterizes the
posterior convergence, and also the rate of posterior convergence and misspecification. 
%However, in this article and in the
%supplement the methods that we used for verifying all the seven assumptions of \citet{ss} appear to be applicable to investigation of posterior convergence in general Process
%Prior for Nonparametric Binary Regression  also.

%%%%%%%%%%%%%%%%%%%%%%%%%%%%%%%%%%%%%%%%%%%
%%%%%%%%%%%%%%%%%%%%%%%%%%%%%%%%%%%%%%%%%%%%%
%%%%%%%%%%%%%%%%%%%%%%%%%%%%%%%%%%%%%%%%%%%%%%%
%&&&&&&&&&&&&&&&&&&&&&&&&&&&&&&&&&&&&&&&&&&&&&&&&&&&&&&&&&&&&&&&&&&&&&&&&&&&
%%%$$$$$$$$$$$$$$$$$$$$$$$$$$$$$$$$$$$$$$$$$$$$$$$$$$$$$$$$
\newpage
\section*{Appendix}
\appendix
\section{ Assumptions and theorems of Shalizi}

%\subsection{Appendix A : Preliminaries for ensuring posterior consistency : Assumptions and theorems of Shalizi}
%~~~~~~~~~~~~~~~~~~~~~~~~~~~~~~~~~~~~~~~~~~~~~~~~~~~~~~~~~~~~~~~~
\label{ASZ}

%The assumption of \ctn{Shalizi09} for posterior consistency of model parameter does not require the truth to be inside the model parameter space and hence provide a very general background for observing consistency. For example, in \ctn{Chatterjee19}, Bayes Factor consistency for general setup  has been examined in 
%\ctn{Chatterjee19}, and which is application of the results in \ctn{Shalizi09}. 
Following \ctn{Shalizi09}, let us consider a probability space $(\Omega,\mathcal F,P)$, a sequence
of random variables $\{Y_1,Y_2,\ldots\}$ taking values in the measurable space $(\aleph,\mathcal X)$,
having infinite-dimensional distribution $P$. %Denoting $\left\{X_t;t=1,\ldots,T\right\}$ by $\bX_T$,
The theoretical development requires no restrictive assumptions on $P$ such as it being a product measure, Markovian, 
or exchangeable, thus paving the way for great generality. 

Let $\mathcal F_n=\sigma(\textbf{Y}_n)$ denote the natural filtration, that is, the $\sigma$-algebra generated by $\textbf{Y}_n$. 
%$\bX_T=\{X_1,\ldots,X_T\}$, $T$ being the sample size.
Also, let the distributions of the processes adapted to $\mathcal F_n$ be denoted by $F_{\theta}$, where
$\theta$ takes values in a measurable space $(\Theta,\mathcal T)$. Here $\theta$ denotes the hypothesized probability measure
associated with the unknown distribution of $\{Y_1,Y_2,\ldots\}$ and $\Theta$ is the set of hypothesized
probability measures. 
In other words, assuming that $\theta$ is the
infinite-dimensional distribution of the stochastic process $\{Y_1,Y_2,\ldots\}$, $F_{\theta}$ denotes the 
$n$-dimensional marginal distribution associated with $\theta$; $n$ is suppressed for the ease of notation.
For parametric models, the probability 
measure $\theta$ corresponds to some probability density with respect to some dominating measure (such as Lebesgue
or counting measure) and consists of unknown, but finite number of parameters. 
For nonparametric models, $\theta$ is usually associated with infinite number of parameters and may not even
have any density with respect to $\sigma$-finite measures.

As in \ctn{Shalizi09}, we assume that $P$ and all the $F_{\theta}$ are dominated by a common measure
with densities $p$ and $f_\theta$, respectively. In \ctn{Shalizi09} and in our
case, the assumption that $P\in\Theta$, is not required, so that all possible models are allowed to be misspecified.
Indeed, \ctn{Shalizi09} provides an example of such misspecification where the true model $P$ is not Markov 
but all the hypothesized models indexed by $\theta$ are $k$-th order stationary binary Markov models, for $k=1,2,\ldots$. 
As shown in \ctn{Shalizi09}, the results of posterior convergence hold even in the case of such misspecification, 
essentially because the true model can be approximated by the $k$-th order Markov models belonging to $\Theta$.

Given a prior $\pi$ on $\theta$, we assume that the posterior distributions $\pi(\cdot|\textbf{Y}_n)$ are dominated by a common
measure for all $n>0$. %abusing notation, we denote the density at $\theta$ by $\pi(\theta|\textbf{Y}_n)$.

\subsection{Assumptions}
\label{subsec:assumptions_shalizi}

\begin{itemize}
	\item[(S1)] Letting $f_{\theta}(\mathbf{Y}_n)$ be the likelihood under parameter $\theta$ an$f_{\theta_0}(\mathbf{Y}_n)$ be the likelihood under 
		the true parameter $\theta_0$, given the true model $P$, consider the following likelihood ratio:
	\begin{equation}
	R_n(\theta)=\frac{f_{\theta}(\mathbf{Y}_n)}{f_{\theta_0}(\mathbf{Y}_n)}.
	\label{eq:R_T}
	\end{equation}
	Assume that $R_n(\theta)$ is $\mathcal F_n\times \mathcal T$-measurable for all $n>0$.
\end{itemize}

\begin{itemize}
	\item[(S2)] For every $\theta\in\Theta$, the KL divergence rate
	\begin{equation}
	h(\theta)=\underset{n\rightarrow\infty}{\lim}~\frac{1}{n}E\left[\log\left\{\frac{f_{\theta_0}(\mathbf{Y}_n)}{f_{\theta}(\mathbf{Y}_n)}\right\}\right].
	\label{eq:S2}
	\end{equation}
	exists (possibly being infinite) and is $\mathcal T$-measurable.
	Note that in the $iid$ set-up, $h(\theta)$ reduces to the KL divergence between the true and the
	hypothesized model, so that (\ref{eq:S2}) may be regarded as a generalized KL divergence measure.
\end{itemize}

\begin{itemize}
	\item[(S3)] For each $\theta\in\Theta$, the generalized or relative asymptotic equipartition property holds, and so,
	almost surely with respect to $P$,
	\begin{equation}
	\underset{n\rightarrow\infty}{\lim}~\frac{1}{n}\log \left[R_n(\theta)\right]=-h(\theta),
	\label{eq:equipartition}
	\end{equation}
	where $h(\theta)$ is given by (\ref{eq:S2}).
	
	Intuitively, the terminology 
	``asymptotic equipartition" refers to dividing up $\log \left[R_n(\theta)\right]$ into $n$ factors for large $n$ 
	such that all the factors are asymptotically equal. Again, considering the $iid$ scenario helps clarify this point,
	as in this case each factor converges to the same KL divergence between the true and the postulated model. 
	With this understanding note that the purpose of condition (S3) is to ensure that relative to the true distribution, 
	the likelihood of each $\theta$ decreases to zero exponentially
	fast, with rate being the KL divergence rate (\ref{eq:equipartition}). 
\end{itemize}

\begin{itemize}
	\item[(S4)] 
	Let $I=\left\{\theta:h(\theta)=\infty\right\}$. 
	The prior $\pi$ on $\theta$ satisfies $\pi(I)<1$.
	Failure of this assumption entails extreme misspecification of almost all the hypothesized models $f_{\theta}$ relative
	to the true model $p$. With such extreme misspecification, posterior consistency is not expected to hold.
\end{itemize}

%To state Assumption (S5), recall that $h\left(A\right)$, $J(\theta)$ and $J(A)$ are given by (\ref{eq:h2}), (\ref{eq:J}) and (\ref{eq:J2}), respectively. 
%we need to adopt notations from \ctn{Shalizi09}. For $A\subseteq\Theta$, let
%\begin{align}
%h\left(A\right)&=\underset{\theta\in A}{\mbox{ess~inf}}~h(\theta);\label{eq:h2}\\
%J(\theta)&=h(\theta)-h(\Theta);\label{eq:J}\\
%J(A)&=\underset{\theta\in A}{\mbox{ess~inf}}~J(\theta),\label{eq:J2}
%\end{align}
%where, for any function $g:\Theta\mapsto\mathbb R$, where $\mathbb R$ is the real line, 
%$$\underset{\theta\in A}{\mbox{ess~inf}}~g(\theta)=
%\sup\left\{r\in\mathbb R:g(\theta)>r,~\mbox{for almost all}~\theta\in A\right\},$$ 
%is the essential infimum of $g$ over the set $A$. Here ``almost all" is with respect to the prior distribution. 
%In words, essential infimum is the greatest lower bound which holds with prior probability one. 
\begin{itemize}
	\item[(S5)] There exists a sequence of sets $\mathcal G_n\rightarrow\Theta$ as $n\rightarrow\infty$ 
	such that: %along with $\pi(\mathcal G_T)>0$
	\begin{enumerate}
		\item $h\left(\mathcal G_n\right)\rightarrow h\left(\Theta\right)$, as $n\rightarrow\infty$.
		\item The following inequality holds for some $\alpha>0, \beta> 2h(\Theta)$ 
		\begin{equation*}
		\pi\left(\mathcal G_n\right)\geq 1-\alpha\exp\left(-\beta n\right);
		%\label{eq:A5_1}
		\end{equation*}
		\item The convergence in (S3) is uniform in $\theta$ over $\mathcal G_n\setminus I$.
	\end{enumerate}
	The sets $\mathcal G_n$ can be loosely interpreted as the sieves. Method of sieves is common to Bayesian non parametric approach,
	such that the behaviour of the likelihood ratio and the posterior on the sets $\mathcal G_n$ 
	essentially carries over to $\Theta$. 
	This can be anticipated from the first and the second parts of the assumption; the second part ensuring in particular
	that the parts of $\Theta$ on which the log likelihood ratio may be ill-behaved have exponentially small prior probabilities.
	The third part is more of a technical condition that is useful in proving posterior convergence through the sets
		$\mathcal G_n$. For further details, see \ctn{Shalizi09}.
\end{itemize}

For each measurable $A\subseteq\Theta$, for every $\delta>0$, there exists a random natural number $\tau(A,\delta)$
such that
\begin{equation}
\frac{1}{n}\log\left[\int_{A}R_n(\theta)\pi(\theta)d\theta\right]
\leq \delta+\underset{n\rightarrow\infty}{\lim\sup}~\frac{1}{n}
\log\left[\int_{A}R_n(\theta)\pi(\theta)d\theta\right],
\label{eq:limsup_2}
\end{equation}
for all $n>\tau(A,\delta)$, provided 
$\underset{n\rightarrow\infty}{\lim\sup}~\frac{1}{n}\log\left[\int_{A}R_n(\theta)\pi(\theta)d\theta\right]<\infty$.
%$\mathbb I_A$ denotes the indicator function of the set $A$.
Regarding this, the following assumption has been made by Shalizi:
\begin{itemize}
	\item[(S6)] The sets $\mathcal G_n$ of (A5) can be chosen such that for every $\delta>0$, the inequality
	$n>\tau(\mathcal G_n,\delta)$ holds almost surely for all sufficiently large $n$.
	
	To understand the essence of this assumption, note that for almost every data set $\{Y_1,Y_2,\ldots\}$
	there exists $\tau(\mathcal G_n,\delta)$ such that equation (\ref{eq:limsup_2}) holds with $A$ replaced by $\mathcal G_n$
	for all $n>\tau(\mathcal G_n,\delta)$. Since $\mathcal G_n$ are sets with large enough prior probabilities,
	the assumption formalizes our expectation that $R_n(\theta)$ decays fast enough on $\mathcal G_n$ so that 
	$\tau(\mathcal G_n,\delta)$ is nearly stable in the sense that it is not only finite but also not 
	significantly different for different data sets when $n$ is large. See \ctn{Shalizi09} for more detailed explanation.
	
\end{itemize}

\begin{itemize}
	\item[(S7)]The sets $\mathcal G_n$ of (S5) and (S6) can be chosen such that for any set $A$ with $\pi(A) > 0$, 
	\begin{equation}
	\lim_{n \rightarrow \infty}  h\left( \mathcal G_n \cap A \right)= h(A).
	\end{equation}
\end{itemize}

Under the above assumptions, \ctn{Shalizi09} proved the following results.

\begin{theorem}[\ctn{Shalizi09}]
	\label{theorem:shalizi1}
	Consider assumptions (S1)--(S7) and any set $A\in\mathcal T$ with $\pi(A)>0$ and $h(A)>h(\Theta)$. Then,
	\begin{equation*}
		\underset{n\rightarrow\infty}{\lim}~\pi(A|\bY_n)=0,~\mbox{almost surely}.
		%\label{eq:supp_post_conv1}
	\end{equation*}
	%where $\pi(\cdot|\bY_n)$ denotes the posterior distribution of $\theta$ given $\bY_n$.
\end{theorem}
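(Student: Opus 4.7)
The plan is to write the posterior as the ratio
\[
\pi(A\mid \bY_n) \;=\; \frac{\int_A R_n(\theta)\,d\pi(\theta)}{\int_\Theta R_n(\theta)\,d\pi(\theta)}
\]
and obtain an upper bound $\leq e^{-n(h(A)-\epsilon)}$ on the numerator together with a lower bound $\geq e^{-n(h(\Theta)+\epsilon)}$ on the denominator, for every small $\epsilon>0$ and all $n$ large enough almost surely. Since $h(A)>h(\Theta)$ by hypothesis, choosing $\epsilon<(h(A)-h(\Theta))/3$ makes the ratio decay at an exponential rate, which is far stronger than the claimed $\pi(A\mid \bY_n)\to 0$.

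For the numerator, the natural step is to split $A=(A\cap\mathcal G_n)\cup(A\cap\mathcal G_n^c)$. On the sieve part, assumption (S5)(3) makes the asymptotic equipartition in (S3) uniform in $\theta\in\mathcal G_n\setminus I$, so eventually $R_n(\theta)\leq \exp(-n(h(\theta)-\epsilon))$ uniformly there; taking essential infimum and using (S7) yields
\[
\tfrac{1}{n}\log\!\int_{A\cap\mathcal G_n}\! R_n\,d\pi \;\leq\; -h(A\cap\mathcal G_n)+\epsilon \;\longrightarrow\; -h(A)+\epsilon.
\]
On the complement, Markov's inequality plus (S5)(2) gives $E\!\int_{\mathcal G_n^c} R_n\,d\pi = \pi(\mathcal G_n^c)\leq \alpha e^{-\beta n}$ with $\beta>2h(\Theta)$, and Borel--Cantelli makes this a sure bound of the form $e^{-\beta' n}$ with $\beta'>h(\Theta)$, which is dominated by the sieve bound once $\epsilon$ is small. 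Assumption (S6) is precisely what licenses the passage from ``limsup'' bounds on $\int_{\mathcal G_n}R_n\,d\pi$ to bounds that hold for all sufficiently large $n$, which is what one needs for the upper bound to hold almost surely rather than only in limsup.

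For the denominator, the idea is to exploit $N_\epsilon:=\{\theta:h(\theta)\leq h(\Theta)+\epsilon\}$, which by the definition of $h(\Theta)$ as the essential infimum has $\pi(N_\epsilon)>0$. On $N_\epsilon$, pointwise (S3) gives $R_n(\theta)\geq \exp(-n(h(\theta)+\epsilon))\geq \exp(-n(h(\Theta)+2\epsilon))$ eventually, but this ``eventually'' depends on $\theta$, so the crux is to make it uniform on a subset of positive prior measure. I would invoke Egorov's theorem: since $\tfrac{1}{n}\log R_n(\theta)+h(\theta)\to 0$ pointwise $\pi$-a.s.\ on $N_\epsilon$, there is a set $E\subseteq N_\epsilon$ with $\pi(E)>0$ on which the convergence is uniform. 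Then $\int_\Theta R_n\,d\pi\geq \int_E R_n\,d\pi\geq \pi(E)\exp(-n(h(\Theta)+2\epsilon))$ for large $n$. Combining the two bounds gives $\tfrac{1}{n}\log\pi(A\mid \bY_n)\leq -(h(A)-h(\Theta))+O(\epsilon)$ almost surely, and letting $\epsilon\downarrow 0$ finishes the proof.

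The main obstacle is the lower bound on the denominator: the pointwise AEP in (S3) is not automatically uniform, and a naive application of Fatou gives the wrong direction. The Egorov-type argument restricted to the essential-infimum neighborhood $N_\epsilon$ is the technical heart of the proof, and it is here that one really uses that $h(\Theta)$ is the essential infimum (so $N_\epsilon$ has positive prior mass) rather than a mere infimum. Everything else is bookkeeping between (S5)--(S7), the sieve prior bound, and Borel--Cantelli.
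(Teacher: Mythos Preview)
The paper does not prove this theorem at all: it is stated in Appendix~\ref{ASZ} as a result of \ctn{Shalizi09} and quoted without proof, so there is no ``paper's own proof'' to compare your sketch against. Your outline is essentially the strategy Shalizi uses in the original paper (upper bound the numerator via the sieve decomposition and the uniform equipartition on $\mathcal G_n$, control $\mathcal G_n^c$ through $E\!\int_{\mathcal G_n^c}R_n\,d\pi\le\pi(\mathcal G_n^c)$ and Borel--Cantelli, and lower bound the denominator by an Egorov/Fubini argument on $N_\epsilon$), so in spirit you have reconstructed the intended proof.

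Two small points of care. First, your remark that the $\mathcal G_n^c$ contribution is ``dominated by the sieve bound once $\epsilon$ is small'' is not literally correct: the sieve bound is $e^{-n(h(A)-\epsilon)}$ and can be far smaller than $e^{-\beta' n}$ when $h(A)$ is large, since (S5) only guarantees $\beta>2h(\Theta)$, not $\beta>h(A)$. This is harmless for the present theorem because you only need each piece of the numerator, divided by the denominator $\gtrsim e^{-n(h(\Theta)+2\epsilon)}$, to tend to zero, and $\beta'>h(\Theta)$ suffices for that; but it is exactly why the sharper rate statement (Theorem~\ref{theorem:shalizi2}) needs the extra hypothesis $\beta>2h(A)$ or $A\subset\bigcap_{k\ge n}\mathcal G_k$. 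Second, the Egorov step requires that, for $P$-a.e.\ data sequence, the convergence $\tfrac{1}{n}\log R_n(\theta)\to -h(\theta)$ holds for $\pi$-a.e.\ $\theta$; this follows from the pointwise-in-$\theta$ statement (S3) only via Fubini, which in turn needs the joint measurability in (S1). You should make that dependence explicit.
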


The rate of convergence of the log-posterior is given by the following result. 
\begin{theorem}[\ctn{Shalizi09}]
	\label{theorem:shalizi2}
	Consider assumptions (S1)--(S7) and any set $A\in\mathcal T$ with $\pi(A)>0$. If $\beta>2h(A)$, where
	$\beta$ corresponds to assumption (S5), or if $A\subset\cap_{k=n}^{\infty}\mathcal G_k$ for some $n$, then
	\begin{equation*}
		\underset{n\rightarrow\infty}{\lim}~\frac{1}{n}\log\pi(A|\bY_n)=-J(A),~\mbox{almost surely.}
		%\label{eq:supp_post_conv2}
	\end{equation*}
	%where $\pi(\cdot|\bY_n)$ denotes the posterior distribution of $\theta$ given $\bY_n$.
\end{theorem}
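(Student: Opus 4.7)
The plan is to start from Bayes' formula, writing
\begin{equation*}
\frac{1}{n}\log\pi(A|\bY_n) \;=\; \frac{1}{n}\log\int_A R_n(\theta)\,d\pi(\theta) \;-\; \frac{1}{n}\log\int_\Theta R_n(\theta)\,d\pi(\theta),
\end{equation*}
and then show separately that the first term tends to $-h(A)$ and the second to $-h(\Theta)$, so that their difference is $-(h(A)-h(\Theta))=-J(A)$. The key input in both cases is the pointwise AEP (S3) of Theorems \ref{T2} and \ref{PT2}, together with the sieve structure from (S5)--(S7) that was established for the binary and Poisson setups in Theorems \ref{mainth1} and \ref{Pmainth1}.

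For the denominator, the upper bound $\limsup_n \frac{1}{n}\log\int_\Theta R_n\,d\pi \leq -h(\Theta)$ follows by splitting $\Theta=\mathcal{G}_n\cup\mathcal{G}_n^c$: on the sieve, uniform AEP on the compact set $\mathcal G_n$ together with (S6) yields $\frac{1}{n}\log\int_{\mathcal{G}_n}R_n\,d\pi\leq -h(\mathcal{G}_n)+\epsilon$, and (S5)(1) sends $h(\mathcal{G}_n)\to h(\Theta)$; off the sieve, Fubini and $E[R_n(\theta)]=1$ give $E\bigl[\int_{\mathcal{G}_n^c}R_n\,d\pi\bigr]=\pi(\mathcal{G}_n^c)\leq\alpha e^{-\beta n}$, so Markov plus Borel--Cantelli (using $\beta>2h(\Theta)$) makes this term decay at rate $-\beta/2<-h(\Theta)$ almost surely. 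For the matching lower bound, fix $\epsilon>0$ and set $B_\epsilon=\{\theta:h(\theta)<h(\Theta)+\epsilon\}$, which has $\pi(B_\epsilon)>0$ by the essential-infimum definition of $h(\Theta)$. Jensen's inequality applied to the normalized prior $\pi(\cdot)/\pi(B_\epsilon)$ on $B_\epsilon$ gives
\begin{equation*}
\frac{1}{n}\log\int_{B_\epsilon} R_n\,d\pi \;\geq\; \frac{1}{\pi(B_\epsilon)}\int_{B_\epsilon}\frac{\log R_n(\theta)}{n}\,d\pi(\theta)\;+\;\frac{1}{n}\log\pi(B_\epsilon),
\end{equation*}
and the right side tends to $-\frac{1}{\pi(B_\epsilon)}\int_{B_\epsilon}h(\theta)\,d\pi(\theta)\geq -(h(\Theta)+\epsilon)$ by AEP and dominated convergence; letting $\epsilon\downarrow 0$ completes the denominator.

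The numerator $\frac{1}{n}\log\int_A R_n\,d\pi\to -h(A)$ is obtained by the same two-sided argument applied to $A$. The lower bound uses $B_\epsilon'=\{\theta\in A:h(\theta)<h(A)+\epsilon\}$, again of positive $\pi$-measure because $h(A)$ is an essential infimum on $A$ and $\pi(A)>0$. The upper bound splits $A=(A\cap\mathcal{G}_n)\cup(A\cap\mathcal{G}_n^c)$: on the sieve, (S6) and (S7) yield $\frac{1}{n}\log\int_{A\cap\mathcal{G}_n}R_n\,d\pi\leq -h(A\cap\mathcal{G}_n)+\epsilon\to -h(A)+\epsilon$. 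The off-sieve part is where the two alternative hypotheses enter: if $\beta>2h(A)$, then the universal bound $\int_{\mathcal{G}_n^c}R_n\,d\pi\leq e^{-\beta n/2}$ obtained above decays faster than $e^{-h(A)n}$ and is therefore dominated by the on-sieve term; if instead $A\subset\bigcap_{k=n}^\infty\mathcal{G}_k$ eventually, then $A\cap\mathcal{G}_n^c=\emptyset$ for large $n$ and the off-sieve piece simply vanishes. Either way, $\limsup_n \frac{1}{n}\log\int_A R_n\,d\pi \leq -h(A)$.

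The main obstacle is the lower-bound step via Jensen, because the AEP holds only pointwise in $\theta$ on a $P$-null set that depends on $\theta$, so to pass a limit under the $\pi$-integral on $B_\epsilon$ (or $B_\epsilon'$) one must exhaust these sets by increasing compact subsets on which the convergence $\frac{1}{n}\log R_n(\theta)\to -h(\theta)$ is uniform (available from the uniform-on-compacts clause of Theorems \ref{T2} and \ref{PT2}), and then invoke dominated convergence with the integrand bounded above by $h(\Theta)+\epsilon+o(1)$ on $B_\epsilon$ (respectively $h(A)+\epsilon+o(1)$ on $B_\epsilon'$). Once this measurability/integrability bookkeeping is in place, combining the numerator limit $-h(A)$ with the denominator limit $-h(\Theta)$ yields the claimed almost-sure rate $-J(A)$.
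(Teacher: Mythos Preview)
The paper does not prove this theorem: it is stated in the appendix as a result quoted directly from \ctn{Shalizi09}, without proof. There is therefore no ``paper's own proof'' to compare against; the paper's contribution is to verify the hypotheses (S1)--(S7) in the binary and Poisson setups and then invoke Shalizi's theorems as black boxes.

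Your sketch is a reasonable reconstruction of the standard argument and is essentially how Shalizi proceeds: Bayes' formula, Barron--Jensen lower bound on near-minimizing sets $B_\epsilon$, and an upper bound via the sieve split with Markov/Borel--Cantelli handling $\mathcal G_n^c$. Two points to tighten. First, this is an abstract result under (S1)--(S7); your appeals to Theorems~\ref{T2}, \ref{PT2}, \ref{mainth1}, \ref{Pmainth1} are to the paper's \emph{verifications} of those hypotheses in specific models, not to ingredients of the abstract proof, so they should be replaced by direct references to (S3), (S5), (S6), (S7). Second, the step ``(S6) and (S7) yield $\frac{1}{n}\log\int_{A\cap\mathcal G_n}R_n\,d\pi\leq -h(A\cap\mathcal G_n)+\epsilon$'' is not immediate from (S6) as stated, since (S6) controls $\tau(\mathcal G_n,\delta)$, not $\tau(A\cap\mathcal G_n,\delta)$. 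In Shalizi's paper the inequality $\limsup_n \frac{1}{n}\log\int_B R_n\,d\pi\leq -h(B)$ for general $B$ is a separate lemma, proved using Egorov's theorem on the uniform convergence in (S5)(3) to carve out a subset of $\mathcal G_n$ of nearly full prior mass on which the AEP holds uniformly; your compact-exhaustion idea is in the right spirit but would need to be made precise along those lines.
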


%%%%%%%%%%%%%%%%%%%%%%%++++++++++++++++++++++++++++++++++++++++++++++++++++++
%%%+++++++++++++++++++++++++++++++++++++++++++++++++++++++++++++++++++++++
%%%+++++++++++++++++++++++++++++++++++++++++++++++++++++++++

\newpage

\section{ Verification of (S1) to (S7) for binary regression}\label{Shaverify}

\subsection{Verification of (S1) for binary regression }
\label{subsec:S1}

Observe that
\begin{align}
& f_{p}(\bY_n|\bX_n)=\prod_{i=1}^{n} f(y_i|x_i) = \prod_{i=1}^{n}  p(x_i)^{y_i} \left(1-p(x_i)\right)^{1-y_i},  \\
& f_{p_0}(\bY_n|\bX_n)=\prod_{i=1}^{n} f_{0}(y_i|x_i) = \prod_{i=1}^{n}  p_{0}(x_i)^{y_i} \left(1-p_{0}(x_i)\right)^{1-y_i}. 
\end{align}

Therefore,

\begin{align} \frac{1}{n} \log R_{n}(p)= &\frac{1}{n}\sum_{i=1}^{n} \left\lbrace\left( y_i\log \left(\frac{p(x_i)}{p_0(x_i)}\right) \right)
	+ (1-y_i) \log \left(\frac{1- p(x_i)}{1-p_0(x_i)}\right) \right \rbrace. \label{S11}
\end{align}

To show measurability of $R_n(p)$, first note that for any $a\in\mathfrak R$,
\begin{align}
	&\left\{(y_i,\eta): y_i\log \left(\frac{p(x_i)}{p_0(x_i)}\right) 
	+ (1-y_i) \log \left(\frac{1- p(x_i)}{1-p_0(x_i)}\right) < a\right\}\notag\\
	&=\left\{\eta:\log \left(\frac{p(x_i)}{p_0(x_i)}\right)<a\right\}\bigcup\left\{\eta:\log \left(\frac{1-p(x_i)}{1-p_0(x_i)}\right)<a\right\}.
\label{eq:S1}
\end{align}
Note that for given $p$, there exists $0<\epsilon<1/2$ such that $\epsilon< p(x) <1-\epsilon$, for all $x\in\mathfrak X$.
Now consider a sequence $\tilde\eta_j$, $j=1,2,\ldots$ such that $\|\tilde\eta_j-\eta\|_{\infty}\rightarrow 0$, as $j\rightarrow\infty$. Then, with 
$\tilde p_j(x)=H\left(\tilde\eta_j(x)\right)$, note that there exists $j_0\geq 1$ such that for $j\geq j_0$,  
$\epsilon<\tilde p_j(x) <1-\epsilon$, for all $x\in\mathfrak X$. Hence, using the inequality $1-\frac{1}{x}\leq\log x\leq x-1$ for $x>0$, we obtain
$\left|\log\left(\frac{\tilde p_j(x_i)}{p(x_i)}\right)\right|\leq C\|\tilde p_j-p\|_{\infty}$ and 
$\left|\log\left(\frac{1-\tilde p_j(x_i)}{1-p(x_i)}\right)\right|\leq C\|\tilde p_j-p\|_{\infty}$, for some $C>0$, for all $x\in\mathfrak X$.
Hence, for $j\geq j_0$,
\begin{align}
&\left|\log\left(\frac{\tilde p_j(x_i)}{p_0(x_i)}\right)-\log\left(\frac{p(x_i)}{p_0(x_i)}\right)\right|
	=\left|\log\left(\frac{\tilde p_j(x_i)}{p(x_i)}\right)\right|\leq C\|\tilde p_j-p\|_{\infty}. %\leq CL\|\tilde\eta_j-\eta\|_{\infty}\rightarrow 0,
	\label{eq:taylor0}
\end{align}
%as $j\rightarrow\infty$, where $L>0$ is the Lipschitz constant associated with $H$. 
Now, since $H$ is continuously differentiable, using Taylor's series expansion up to the first order we obtain, 
\begin{align}
	\|\tilde p_j-p\|_{\infty}&=\underset{x\in\mathfrak X}{\sup}~\left|H\left(\tilde\eta_j(x)\right)-H\left(\eta(x)\right)\right|\notag\\
	&=\underset{x\in\mathfrak X}{\sup}~\left|H'(u(\tilde\eta_j(x),\eta(x)))\right|\|\tilde\eta_j-\eta\|_{\infty},
	\label{eq:taylor1}
\end{align}
where $u(\tilde\eta_j(x),\eta(x))$ lies between $\eta(x)$ and $\tilde\eta_j(x)-\eta(x)$. Since $\|\tilde\eta_j-\eta\|_{\infty}\rightarrow 0$, as $j\rightarrow\infty$, 
it follows from (\ref{eq:taylor1}) that $\|\tilde p_j-p\|_{\infty}\rightarrow 0$, as $j\rightarrow\infty$.
This again implies, thanks to (\ref{eq:taylor0}), that $\left|\log\left(\frac{\tilde p_j(x_i)}{p_0(x_i)}\right)-\log\left(\frac{p(x_i)}{p_0(x_i)}\right)\right|
\rightarrow 0$, as $j\rightarrow\infty$.

In other words, $\log\left(\frac{p(x_i)}{p_0(x_i)}\right)$ is continuous
in $\eta$, and hence $\left\{\eta:\log \left(\frac{p(x_i)}{p_0(x_i)}\right)<a\right\}$ of (\ref{eq:S1}) is measurable. 
Similarly, $\log\left(\frac{1-p(x_i)}{1-p_0(x_i)}\right)$ is also continuous in $\eta$, so that 
$\left\{\eta:\log \left(\frac{1-p(x_i)}{1-p_0(x_i)}\right)<a\right\}$ is also measurable. Hence, 
the individual terms in (\ref{S11}) are measurable. 
Since sums of measurable functions are measurable, it follows that $\log R_n(p)$, and hence $R_n(p)$, is measurable.

%Similarly by fixing $\tilde{p} $ we have $|\frac{p_j}{\tilde{p}} |\rightarrow 0$ as $n \rightarrow \infty$. Then observe that

%\begin{align}
%&|(\log p_j-\log \tilde{p})+\left( \log (1-\tilde{p})- \log (1-p_j)\right)| \\
%&\leq |(\log p_j-\log \tilde{p})|+|\left( \log (1-\tilde{p})- \log (1-p_j)\right)| \\
%&\leq \log \left(\left|\frac{p_j}{\tilde{p}}\right|\right)+ \log \left(\left|\frac{p_j}{\tilde{p}}\right| \right) \\&\rightarrow 0 \ \ \ \ \text{as} \ \ n \rightarrow \infty
%\end{align}

\subsection{Verification of (S2) for binary regression}
\label{subsec:S2}
for every $p\in \Theta$, we need to show that  the KL divergence rate

\begin{equation*}
h(p)=\underset{n\rightarrow\infty}{\lim}~\frac{1}{n}E_{p_0}\left[\log\left\{\frac{f_{p_0}(\bY_n|\bX_n)}{f_{p}(\bY_n|\bX_n)}\right\}\right] =\underset{n\rightarrow\infty}{\lim}~\frac{1}{n}E_{p_0}\left[-\log\left\{R_{n}(p)\right\}\right].
%\label{eq:A222}
\end{equation*}
exists (possibly being infinite) and is $\mathcal T$-measurable.

Now,
\begin{align} \frac{1}{n} \log R_{n}(p)= &  \frac{1}{n}\sum_{i=1}^{n} \left\lbrace\left( y_i\log p(x_i) \right)+ (1-y_i) \log \left(1- p(x_i) \right) \right \rbrace \\
- & \frac{1}{n}\sum_{i=1}^{n} \left\lbrace\left( y_i\log p_{0}(x_i) \right)+ (1-y_i) \log \left(1- p_{0}(x_i) \right) \right \rbrace .\notag
\end{align}
Therefore,

\begin{align}\frac{1}{n}E_{p_0}\left[-\log\left\{R_{n}(p)\right\}\right]=  &\frac{1}{n}\sum_{i=1}^{n} \left\lbrace\left(p_{0}(x_i) \log p_{0}(x_i) \right)+ (1-p_{0}(x_i)) \log \left(1- p_{0}(x_i) \right) \right \rbrace \\
& - \frac{1}{n}\sum_{i=1}^{n} \left\lbrace\left(p_{0}(x_i)\log p(x_i) \right)+ (1-p_{0}(x_i)) \log \left(1- p(x_i) \right) \right \rbrace.\notag
\end{align}

\begin{align}\underset{n\rightarrow\infty}{\lim}~\frac{1}{n}E_{p_0}\left[-\log\left\{R_{n}(p)\right\}\right]=  &\underset{n\rightarrow\infty}{\lim}~\frac{1}{n}\sum_{i=1}^{n} \left\lbrace\left(p_{0}(x_i) \log p_{0}(x_i) \right)+ (1-p_{0}(x_i)) \log \left(1- p_{0}(x_i) \right) \right \rbrace\notag \\
& - \underset{n\rightarrow\infty}{\lim}~\frac{1}{n}\sum_{i=1}^{n} \left\lbrace\left(p_{0}(x_i)\log p(x_i) \right)+ (1-p_{0}(x_i)) \log \left(1- p(x_i) \right) 
	\right \rbrace\notag \\
= & E_{\mathbf{X}}\left\lbrace\left( p_{0}(\mathbf{X})\log p_{0}(\mathbf{X}) \right)+ (1-p_{0}(\mathbf{X})) \log \left(1- p_{0}(\mathbf{X}) \right) \right \rbrace\notag\\
-&E_{\mathbf{X}} \left\lbrace\left( p_{0}(\mathbf{X})\log p(\mathbf{X}) \right)+ (1-p_{0}(\mathbf{X})) \log \left(1- p(\mathbf{X}) \right) \right \rbrace. \label{A2221}
\end{align}

The last line follows from Assumption \ref{AA4} and SLLN. Here $E_{\mathbf{X}} (\cdot)=\int_{\mathfrak{X}} \cdot \ dQ$.

%Now we can rewrite the whole arguments upto \ref{A2221} as follows.
Hence,
\begin{equation}
h(p)=\left[E_{\mathbf{X}} \left( p_{0}(\mathbf{X})\log \left\lbrace \dfrac{p_{0}(\mathbf{X})}{p(\mathbf{X})}\right\rbrace \right)+ E_{\mathbf{X}}\left((1-p_{0}(\mathbf{X})) \log \left\lbrace\dfrac{ \left(1- p_{0}(\mathbf{X})\right)}{\left(1- p(\mathbf{X})\right)} \right\rbrace\right) \right ]. \label{h_p}
\end{equation}
%Observe that 
%\begin{eqnarray}
%&\left[E_{\mathbf{X}} \left( p_{0}(\mathbf{X})\log \left\lbrace \dfrac{p_{0}(\mathbf{X})}{p(\mathbf{X})}\right\rbrace \right)+ E_{\mathbf{X}}\left((1-p_{0}(\mathbf{X})) \log \left\lbrace\dfrac{ \left(1- p_{0}(\mathbf{X})\right)}{\left(1- p(\mathbf{X})\right)} \right\rbrace\right) \right ] \\
%&= E_{\mathbf{X}, \mathbf{Y}} \left( f_{0}(\bY|\bX)\log \left\lbrace \dfrac{f_{0}(\bY|\bX)}{f(\bY|\bX)}\right\rbrace \right) 
%\end{eqnarray}
%
%where $E_{\mathbf{X}, \mathbf{Y}}(\cdot)= \int \int \cdot \ d\mathfrak{C} \ dQ$ and  $f$, $f_0$ are as defined in  \eqref{eq:binary_density} and (\ref{eq:binary_density_true}),
%respectively.
%
%Hence, we can also write alternative form of $h(p)$ as follows:
%\begin{equation}
%h(p)=E_{\mathbf{X}, \mathbf{Y}} \left( f_{0}(\mathbf{X}, \mathbf{Y})\log \left\lbrace \dfrac{f_{0}(\mathbf{X}, \mathbf{Y})}{f(\mathbf{X}, \mathbf{Y})}\right\rbrace \right). 
%\label{h_pf}
%\end{equation}

\subsection{Verification of (S3) for binary regression}
\label{subsec:S3}
Here we need to verify the asymptotic equipartition, that is, almost surely with respect to $P$,
\begin{equation}
\underset{n\rightarrow\infty}{\lim}~\frac{1}{n}\log \left[R_n(p)\right]=-h(p)=\underset{n\rightarrow\infty}{\lim}~
	\frac{1}{n}E\left[\log\left\{\frac{f_{p}(\bY_n|\bX_n)}{f_{p_0}(\bY_n|\bX_n)}\right\}\right].
\label{eq:equipartition22}
\end{equation}
%We will use Assumption \ref{AA6}, Lemma \ref{AAA7} and Lemma \ref{maxq}. 
Observe that,

\begin{align*} \frac{1}{n} \log R_{n}(p)= &\frac{1}{n}\sum_{i=1}^{n} \left\lbrace\left( y_i\log p(x_i) \right)+ (1-y_i) \log \left(1- p(x_i) \right) \right \rbrace \\
- &\frac{1}{n}\sum_{i=1}^{n} \left\lbrace\left( y_i\log p_{0}(x_i) \right)+ (1-y_i) \log \left(1- p_{0}(x_i) \right) \right \rbrace.
\end{align*}
By rearranging the terms we get,
\begin{align*}- \frac{1}{n} \log R_{n}(p)= &\frac{1}{n}\sum_{i=1}^{n} \left\lbrace y_i\log\left(\dfrac{ p_{0}(x_i)}{p(x_i)}\right) + (1-y_i)\log\left(\dfrac{ 1- p_{0}(x_i)}{1-p(x_i)}\right) \right \rbrace.
\end{align*}

Using the inequality $1-\frac{1}{x}\leq\log x\leq x-1$ for $x>0$, compactness of $\mathfrak X$, and continuity of $p(x)$ in $x\in\mathfrak X$ for 
given $p \in \Theta$, $\left|\log\left(\frac{p_0(x_i)}{p(x_i)}\right)\right|\leq C\|p-p_0\|_{\infty}$ and 
$\left|\log\left(\frac{1-p_0(x_i)}{1-p(x_i)}\right)\right|\leq C\|p-p_0\|_{\infty}$, for some $C>0$. Hence,
\begin{align}
& \displaystyle \sum_{i=1}^{\infty} i^{-2} var \left[ \left\lbrace y_i\log\left(\dfrac{ p_{0}(x_i)}{p(x_i)}\right) + (1-y_i)\log\left(\dfrac{ 1- p_{0}(x_i)}{1-p(x_i)}\right) \right \rbrace \right] \\
	&=\displaystyle\sum_{i=1}^{\infty} i^{-2} p_{0}(x_i)(1-p_0(x_i))\notag\\
	&\qquad\qquad\times\left\lbrace\left[\log\left(\dfrac{ p_{0}(x_i)}{p(x_i)}\right)\right]^2 
	+ \left[\log\left(\dfrac{ 1- p_{0}(x_i)}{1-p(x_i)}\right)\right]^2-2\log\left(\dfrac{ p_{0}(x_i)}{p(x_i)}\right)\times
	\log\left(\dfrac{ 1- p_{0}(x_i)}{1-p(x_i)}\right)\right \rbrace\notag \\
	&\leq 4C^2\|p_0\|_{\infty}\|p-p_0\|^2_{\infty}\sum_{i=1}^{\infty}i^{-2}\notag\\
	&<\infty.
%&\leq \displaystyle\sum_{i=1}^{\infty} i^{-2} k\cdot \|p-p_0\|_{\infty} \leq k\cdot \|p-p_0\|_{\infty} \displaystyle\sum_{i=1}^{\infty} i^{-2} < \infty
\end{align}

Observe that $y_i$ are  observations from independent random variables.  Hence by Kolmogorov's SLLN for independent random variables, %and by  Assumption \ref{AA3} as $n \rightarrow \infty$,
\begin{align*}
&-\frac{1}{n} \log R_{n}(p)= \frac{1}{n}\sum_{i=1}^{n} \left\lbrace y_i\log\left(\dfrac{ p_{0}(x_i)}{p(x_i)}\right) + (1-y_i)\log\left(\dfrac{ 1- p_{0}(x_i)}{1-p(x_i)}\right) \right \rbrace \\
	&  \rightarrow  \left[E_{\mathbf{X}} \left( p_{0}(\mathbf{X})\log \left\lbrace \dfrac{p_{0}(\mathbf{X})}{p(\mathbf{X})}\right\rbrace \right)+ E_{\mathbf{X}}\left((1-p_{0}(\mathbf{X})) \log \left\lbrace\dfrac{ \left(1- p_{0}(\mathbf{X})\right)}{\left(1- p(\mathbf{X})\right)} \right\rbrace\right) \right ] =h(p),
\end{align*}
almost surely, as $n\rightarrow\infty$.

\subsection{Verification of (S4) for binary regression }
\label{subsec:S4}

If $I=\{p: \ h(p)=\infty\}$ then we need to show $\Pi(I)<1$. 
Note that due to compactness of $\mathfrak X$ and continuity of $H$ and $\eta$, given $\eta\in\Theta$, $p$ is bounded away from $0$ and $1$. 
Hence, $h(p)\leq\|p-p_0\|_{\infty}\times\left(\frac{1}{\underset{x\in\mathfrak X}{\inf}~p(x)}
+\frac{1}{1-\underset{x\in\mathfrak X}{\sup}~p(x)}\right)<\infty$, almost surely. 
In other words, (S4) holds. 

%Alternatively, (S4)  also follows directly from Lemma \ref{II} and Remark \ref{rmk1}.
%In subsection \ref{subsec:S2} (equation \eqref{h_p}) we have obtained the following expression for $h(p)$. 

\subsection{Verification of (S5) for binary regression}
\label{subsec:S5}
In our model, the parameter space is $\Theta=\mathcal{C'}(\mathfrak{X})$. 
%$\Theta =\left \lbrace  p(\cdot): p(x)=H\left( \eta(x)\right), \eta \in \mathcal{C'}(\mathfrak{X})  \right \rbrace$. 
We need to show that there exists a sequence of sets $\mathcal G_n\rightarrow\Theta$ as $n\rightarrow\infty$ 
such that: %along with $\pi(\mathcal G_T)>0$
\begin{enumerate}
	\item $h\left(\mathcal G_n\right)\rightarrow h\left(\Theta\right)$, as $n\rightarrow\infty$.
	\item The  inequality $\pi\left(\mathcal G_n\right)\geq 1-\alpha\exp\left(-\beta n\right)$ holds for some $\alpha>0, \beta> 2h(\Theta)$. 
	
	\item The convergence in (S3) is uniform in $p$ over $\mathcal G_n\setminus I$.
\end{enumerate}

We shall work with the following sequence of sieve sets considered in \ctn{Chatterjee18a}: for $n\geq 1$,

\begin{equation}\label{sieveset}
\begin{aligned}
%	\mathcal{G}_n={} & \{  p(\cdot) : \ \ p(x)=H(\eta(x)), \eta\in\mathcal{C}'(\mathfrak{X}), \|\eta\|_{\infty} \leq \exp(\left(\beta n\right)^{1/4}),  \\
%	&  \|\eta_{j}'\|_{\infty} \leq \exp(\left(\beta n\right)^{1/4}); j=1, 2, \ldots, d\}.
	\mathcal{G}_n=\left \lbrace \eta\in\mathcal{C}'(\mathfrak{X}): \  \|\eta\|_{\infty} \leq \exp(\left(\beta n\right)^{1/4}), \  
	\|\eta_{j}'\|_{\infty} \leq \exp(\left(\beta n\right)^{1/4}); j=1, 2, \ldots, d\right \rbrace.
\end{aligned}
\end{equation}
%\begin{equation}
%\mathcal{G}_n=\left \lbrace    p(\cdot) : \ \ p(x)=H(\eta(x)), \|\eta\| \leq \exp(\sqrt{\beta n}), \  \|\eta_{j}'\| \leq \exp(\sqrt{\beta n}); j=1, 2, \ldots, d\right \rbrace
%\end{equation}

% Let sequences λn and τn be such that τ (τ < τn) = e−cn and λ(λ > λn) = e−cn, for some constant c. Specific forms of the hyper-priors and the sequences will be discussed later

%The idea behind choosing such a set $\mathcal{G}_n$ for sieve set is to construct increasing compact sets as credible regions of the prior distribution. 
%We will take help of the following sequence of sieve sets $\mathbb{M}_n$  which has been considered in \ctn{Chatterjee18b}: for $n\geq 1$,
%\begin{equation}\label{sieveM}
%	\mathbb{M}_n=\left \lbrace \eta\in\mathcal{C}'(\mathfrak{X}): \  \|\eta\|_{\infty} \leq \exp(\left(\beta n\right)^{1/4}), \  
%	\|\eta_{j}'\|_{\infty} \leq \exp(\left(\beta n\right)^{1/4}); j=1, 2, \ldots, d\right \rbrace.
%\end{equation}
Then  $\mathcal{G}_n \rightarrow \mathcal{C}'(\mathfrak{X})$ as $n \rightarrow \infty$ (\ctn{Chatterjee18a}).
%Hence $\mathcal{G}_n \rightarrow \Theta$ as $n \rightarrow \infty$. %because of bounded, Lipschitz continuous, increasing cdf  $H$.

\subsubsection{ Verification of (S5) (1)}
\label{subsubsec:S51}

We now verify that $h\left(\mathcal G_n\right)\rightarrow h\left(\Theta\right)$, as $n\rightarrow\infty$. Observe that:
\begin{equation}
h(p)=\left[E_{\mathbf{X}} \left( p_{0}(\mathbf{X})\log \left\lbrace \dfrac{p_{0}(\mathbf{X})}{p(\mathbf{X})}\right\rbrace \right)+ E_{\mathbf{X}}\left((1-p_{0}(\mathbf{X})) \log \left\lbrace\dfrac{ \left(1- p_{0}(\mathbf{X})\right)}{\left(1- p(\mathbf{X})\right)} \right\rbrace\right) \right ].
\end{equation}\\
Recall that $h(p)$ is continuous in $p$ and $p$ is continuous in $\eta$, which follows from (\ref{eq:taylor1}). % which follows directly from Lemma \ref{lemmamain}. 
Hence, continuity of $h(p)$, compactness of $\mathcal{G}_n$ along with its non-decreasing
nature with respect to $n$ implies that $h\left(\mathcal G_n\right)\rightarrow h\left(\Theta\right)$, as $n\rightarrow\infty$.

\subsubsection{ Verification of (S5) (2)}
\label{subsubsec:S52}
%The sieveset $\mathcal{G}_n$ is taken as in equation \ref{sieveset}. Now,

\begin{align*}
	\pi(\mathcal{G}_{n}) &=\Pi\left(\|\eta\| \leq \exp(\left(\beta n\right)^{1/4})  
\right)\\
	& -\pi\left(\|\eta_{j}'\| \leq \exp(\left(\beta n\right)^{1/4}); j=1, 2, \ldots, d   \right) \\
	&=\pi\left(\|\eta\| \leq \exp(\left(\beta n\right)^{1/4}),  
 \right)\\
	& -\pi\left(   \bigcup_{j=1}^{d} \left\lbrace\|\eta_{j}'\| \leq \exp(\left(\beta n\right)^{1/4})  \right \rbrace \right) \\
	& \geq 1- \Pi\left(\|\eta\| > \exp(\left(\beta n\right)^{1/4})\right)- \sum_{j=1}^{d} \Pi\left(     \|\eta_{j}'\| \leq \exp(\left(\beta n\right)^{1/4})   \right) \\
& \geq 1- \left(c_{\eta}+ \sum_{j=1}^{d} c_{\eta_{j}'} \right) \exp(-\beta n). 
\end{align*}
where the last inequality follows from Assumption \ref{AA3}.
%where $c_{\eta}$ and $c_{\eta_j}$ are appropriate positive constants.
%The final inequality follows by the Borell-TIS inequality, Assumption \ref{AA2}, and Assumption \ref{AA3} (also refer to \citet{choi}).

\subsubsection{ Verification of (S5) (3)}
\label{subsubsec:S53}

We need to show that  uniform convergence in (S3) in $p $ over $\mathcal{G}_n \setminus I$ holds, where $I=\{p: \ h(p)=\infty\}$ as in subsection \ref{subsec:S4}. In our case, $I=\emptyset$. Hence, we need to show  uniform convergence in (S3) in $p$ over $\mathcal{G}_n $. We need to establish that $\mathcal{G}_n$ is compact, but this has
already been shown by \ctn{Chatterjee18a}. In a nutshell, 
\ctn{Chatterjee18a} proved compactness of $\mathcal{G}_{n}$ for each $n \geq 1$ by showing that $\mathcal{G}_{n}$ is closed, bounded and equicontinuous and then by 
using  Arzela-Ascoli lemma to imply compactness. It should be noted that boundedness of the partial derivatives as in Assumption \ref{AA1} is used to show 
Lipschitz continuity, hence equicontinuity. 
% Note that $\mathcal{G}_n  = \mathcal{G}_{n, \eta} \times \mathcal{G}_{n,\lambda} \times \mathcal{G}_{n,\tau}  $ where 

% First, we need to  prove compactness of $\mathcal{G}_{n}$. Consider the set $\mathcal{M}_{n} $  of expression \ref{sieveM}.
%\ctn{Chatterjee18b} has proved compactness of $\mathcal{M}_{n}$ for each $n \geq 1$ by showing that $\mathcal{M}_{n}$ is closed, bounded and equicontinuous and then by using  Arzela-Ascoli lemma to imply compactness. It should be noted that boundedness of the partial derivatives as in Assumption \ref{AA1} is used to show equicontinuity. 
%Now observe that due to the transformation $p(x)=H(\eta(x))$ with the link function $H$ being continuously differentiable, it follows that
%$\left|p(x_1)-p(x_2)\right|=\left|H(\eta(x_1))-H(\eta(x_2))\right|\leq L_1\left|\eta(x_1)-\eta(x_2)\right|\leq L_1L_2\|x_1-x_2\|$,
%where $L_1>0$ and $L_2>0$ are appropriate Lipschitz constants. The last inequality above follows due to
%Lipschitz continuity of $\eta$, which holds since its partial derivative is bounded. Hence, compactness of $\mathcal{G}_n$ is ensured.

Consider $ \mathcal{G} \in  \left \lbrace  \mathcal{G}_n : \  n = 1, 2, \ldots \right\rbrace $. Now, to show uniform convergence we only need to show the following 
(see, for example, \ctn{Chatterjee18a}):

\begin{enumerate}
	
	\item [(i)]$\dfrac{1}{n} \log( R_n(p)) + h(p)$ is stochastically equicontinuous almost surely in $p \in \mathcal{G}$,
	\item [(ii)] $\dfrac{1}{n} \log( R_n(p)) + h(p)\rightarrow 0$ for all $p \in \mathcal{G}$ as $n \rightarrow \infty$.
\end{enumerate}

We have already shown almost sure pointwise convergence of $n^{-1}\log( R_n(p))$ to  $- h(p)$ in Appendix \ref{subsec:S3}. Hence it is enough to verify stochastic equicontinuity of $\dfrac{1}{n} \log( R_n(p)) + h(p)$ in 
$ \mathcal{G} \in  \left \lbrace  \mathcal{G}_n : \  n = 1, 2, \ldots \right\rbrace$. Stochastic equicontinuity usually follows easily if one can prove that the function concerned  is almost surely Lipschitz continuous (\ctn{Chatterjee18a}). 
%Observe that  the first derivative of $\dfrac{1}{n} \log( R_n(p)) + h(p)$ ) with respect to $\lambda$ and $\tau$ is bounded because  $\lambda$ and $\tau$ both are bounded in $\mathcal{G}$. 
Observe that, if we can show that both $\dfrac{1}{n} \log( R_n(p))$ and  $ h(p)$ are Lipschitz then this would imply that $\dfrac{1}{n} \log( R_n(p)) + h(p)$ is Lipschitz (sum of Lipschitz functions is Lipschitz). %and then the whole argument will boil down to verification of uniform convergence in (S3) in $p$ over $\mathcal{G}_n \setminus I$.

We now show that $\dfrac{1}{n} \log( R_n(p)) $ and $h(p)$ are both Lipschitz in $\mathcal G$. Now, 
\begin{align} \frac{1}{n}  \log R_{n}(p)= \dfrac{1}{n}  \sum_{i=1}^{n} \left\lbrace y_i\log\left(\dfrac{ p(x_i)}{p_{0}(x_i)}\right) + (1-y_i)\log\left(\dfrac{ 1- p(x_i)}{1-p_{0}(x_i)}\right) \right \rbrace.
\end{align}

Let $p_1,p_2$ correspond to $\eta_1,\eta_2\in\Theta$. Note that, since $\|\eta\|_{\infty}\leq\exp\left(\sqrt{\beta m}\right)$ on $\mathcal G=\mathcal G_m$ ($m\geq 1$), 
it follows that $0<\kappa_B\leq p_1(x),p_2(x)\leq 1-\kappa_B<1$, for all $x\in\mathfrak X$. Thus, there exists $C>0$ such that 
$\left|\log\left(\frac{p_1(x)}{p_2(x)}\right)\right|\leq C\|p_1-p_2\|_{\infty}$ and $\left|\log\left(\frac{1-p_1(x)}{1-p_2(x)}\right)\right|\leq C\|p_1-p_2\|_{\infty}$,
for $x\in\mathfrak X$.
Hence, 
\begin{align}
&\left|\frac{1}{n}\log R_n(p_1)-\frac{1}{n}\log R_n(p_2)\right|\notag\\
	&\qquad=\left|\frac{1}{n}\sum_{i=1}^n\left\{y_i\log\left(\frac{p_1(x_i)}{p_2(x_i)}\right)+(1-y_i)\log\left(\frac{1-p_1(x_i)}{1-p_2(x_i)}\right)\right\}\right|\notag\\
	&\qquad\leq 2C\|p_1-p_2\|_{\infty},\notag
\end{align}
showing Lipschitz continuity of $\frac{1}{n}\log R_n(p)$ with respect to $p$ corresponding to $\eta\in\mathcal G=\mathcal G_m$.
Since $H$ is continuously differentiable, $\eta$ and $\eta'$ are bounded on $\mathcal G$, with the same bound for all $\eta$, it follows that 
$p$ is Lipschitz on $\mathcal G$.

To see that $h(p)$ is also Lipschitz in $\mathcal G=\mathcal G_m$, it is enough to note that
\begin{align}
&\left|h(p_1)-h(p_2)\right|=\left|E_{\bX}\left(p_0(\bX)\log\left(\frac{p_2(\bX)}{p_1(\bX)}\right)\right)
	+E_{\bX}\left((1-p_0(\bX))\log\left(\frac{1-p_2(\bX)}{1-p_1(\bX)}\right)\right)\right|\notag\\
	&\qquad\leq2C\|p_1-p_2\|_{\infty},\notag
	\end{align}
and the result follows since $p$ is Lipschitz on $\mathcal G$.

\subsection{Verification of (S6) for binary regression}
\label{subsec:S6}
We need to show:
\begin{equation}
\sum_{n=1}^{\infty}\int_{S^c} P\left(\left| \dfrac{1}{n} \log  R_{n}(p) + h(p) \right|> \kappa -h(\Theta) \right) \ d\pi(p) < \infty.
\label{eq:finite1}
\end{equation}
Let us take $\kappa_1=\kappa -h(\Theta)$.
Observe that, 
\begin{align*} &\frac{1}{n}  \log R_{n}(p) +h(p)\\
=& \dfrac{1}{n}  \sum_{i=1}^{n} \left\lbrace y_i\log\left(\dfrac{ p(x_i)}{p_{0}(x_i)}\right) + (1-y_i)\log\left(\dfrac{ 1- p(x_i)}{1-p_{0}(x_i)}\right) \right \rbrace \\
&+ \left[E_{\mathbf{X}} \left( p_{0}(\mathbf{X})\log \left\lbrace \dfrac{p_{0}(\mathbf{X})}{p(\mathbf{X})}\right\rbrace \right)+ E_{\mathbf{X}}\left((1-p_{0}(\mathbf{X})) \log \left\lbrace\dfrac{ \left(1- p_{0}(\mathbf{X})\right)}{\left(1- p(\mathbf{X})\right)} \right\rbrace\right) \right ]\\
=& \dfrac{1}{n}  \sum_{i=1}^{n} \left\lbrace y_i\log\left(\dfrac{ p(x_i)}{p_{0}(x_i)}\right) - E_{\mathbf{X}} \left( p_{0}(\mathbf{X})\log \left\lbrace \dfrac{p(\mathbf{X})}{p_{0}(\mathbf{X})}\right\rbrace \right)\right\rbrace \\
& + \dfrac{1}{n}  \sum_{i=1}^{n} \left\lbrace(1-y_i)\log\left(\dfrac{ 1- p(x_i)}{1-p_{0}(x_i)}\right)-E_{\mathbf{X}}\left((1-p_{0}(\mathbf{X})) \log \left\lbrace\dfrac{ \left(1- p(\mathbf{X})\right)}{\left(1- p_{0}(\mathbf{X})\right)} \right\rbrace\right) \right \rbrace.
\end{align*}

It follows that:
\begin{align} & P\left(\left| \dfrac{1}{n} \log  R_{n}(p) + h(p) \right|> \kappa_1 \right)  \\
&\leq P\left(\left| \dfrac{1}{n}  \sum_{i=1}^{n} \left\lbrace y_i\log\left(\dfrac{ p(x_i)}{p_{0}(x_i)}\right) - E_{\mathbf{X}} \left( p_{0}(\mathbf{X})\log \left\lbrace \dfrac{p(\mathbf{X})}{p_{0}(\mathbf{X})}\right\rbrace \right)\right\rbrace \right|> \dfrac{\kappa_1}{2} \right) \\
& + P\left(\left|  \dfrac{1}{n}  \sum_{i=1}^{n} \left\lbrace(1-y_i)\log\left(\dfrac{ 1- p(x_i)}{1-p_{0}(x_i)}\right)-E_{\mathbf{X}}\left((1-p_{0}(\mathbf{X})) \log \left\lbrace\dfrac{ \left(1- p(\mathbf{X})\right)}{\left(1- p_{0}(\mathbf{X})\right)} \right\rbrace\right) \right \rbrace  \right|> \dfrac{\kappa_1}{2} \right).\label{hoef12}
\end{align}

%From Lemma \ref{maxq}, 
Since $y_i$ are binary, it follows using the inequalities $1-\frac{1}{x}\leq\log x\leq x-1$, for $x>0$ and Assumptions 
\ref{AA5} and \ref{AA6}, that the random variables $V_i= y_i\log\left(\dfrac{ p(x_i)}{p_{0}(x_i)}\right)$ 
and $W_i=  y_i\log\left(\dfrac{ 1-p(x_i)}{1-p_{0}(x_i)}\right)$ are absolutely bounded by $C\|p-p_0\|_{\infty}$,
for some $C>0$. 
We shall apply Hoeffding's inequality (\ctn{Hoeffding63}) separately on the two terms of \eqref{hoef12} involving $V_i$ and $W_i$.
%(see Theorem \ref{hoef} and also \citet{hoef}).

Note that for $\eta\in\mathcal G_n$,
\begin{align}
& P\left(\left| \dfrac{1}{n}  \sum_{i=1}^{n} \left\lbrace y_i\log\left(\dfrac{ p(x_i)}{p_{0}(x_i)}\right) - E_{\mathbf{X}} \left( p_{0}(\mathbf{X})\log \left\lbrace \dfrac{p(\mathbf{X})}{p_{0}(\mathbf{X})}\right\rbrace \right)\right\rbrace \right|> \dfrac{\kappa_1}{2} \right)\notag \\
& \leq  
	P\left(\left| \dfrac{1}{n}  \sum_{i=1}^{n} \left\lbrace y_i\log\left(\dfrac{ p(x_i)}{p_{0}(x_i)}\right) - 
	p_0(x_i)\log\left(\frac{p(x_i)}{p_0(x_i)}\right)\right\rbrace\right| > \dfrac{\kappa_1}{4} \right)\notag \\
	&\qquad+ P\left(\left| \dfrac{1}{n}  \sum_{i=1}^{n}\left\{p_0(x_i)\log\left(\frac{p(x_i)}{p_0(x_i)}\right)
	-E_{\mathbf{X}} \left( p_{0}(\mathbf{X})\log \left\lbrace \dfrac{p(\mathbf{X})}{p_{0}(\mathbf{X})}\right\rbrace\right)\right\}\right| > \dfrac{\kappa_1}{4} \right) 
	\notag\\
	&\leq 4\exp\left\lbrace  -\dfrac{n\kappa^2_1}{8C^2\|p-p_0\|^2_{\infty}} \right \rbrace
	\leq 4\exp\left\lbrace  -\dfrac{n\kappa^2_1}{8C^2L^2\|\eta-\eta_0\|^2_{\infty}} \right \rbrace,\label{eq:hoeff1}
\end{align}
where $L>0$ is the Lipschitz constant associated with $H$. 
Here it is important to note that for $\eta\in\mathcal G_n$, $H(\eta)$ is Lipschitz in $\eta$ thanks to continuous differentiability of $H$, and boundedness of $\eta$ and $\eta'$
by the same constant on $\mathcal G_n$.
Also note that (\ref{eq:hoeff1}) holds irrespective of $x_i$; $i=1,\ldots,n$ being random or non-random
(see \ctn{Chatterjee18a}). 

%By similar argument along with Lemma \ref{maxq} and Hoeffding's inequality (Theorem \ref{hoef}), we get the following:
Similarly, for $\eta\in\mathcal G_n$,
\begin{align}
& P\left(\left|  \dfrac{1}{n}  \sum_{i=1}^{n} \left\lbrace(1-y_i)\log\left(\dfrac{ 1- p(x_i)}{1-p_{0}(x_i)}\right)-E_{\mathbf{X}}\left((1-p_{0}(\mathbf{X})) \log \left\lbrace\dfrac{ \left(1- p(\mathbf{X})\right)}{\left(1- p_{0}(\mathbf{X})\right)} \right\rbrace\right) \right \rbrace  \right|> \dfrac{\kappa_1}{2} \right) \notag\\
%& \leq  \exp\left\lbrace  -\dfrac{2n \left(\dfrac{\kappa_1}{2}\right)^2}{k\|p-p_0\|^2} \right \rbrace=\exp\left\lbrace  -\dfrac{2n \left(\kappa_1\right)^2}{2k\|p-p_0\|^2} \right \rbrace.
	&\leq 4\exp\left\lbrace  -\dfrac{n\kappa^2_1}{8C^2L^2\|\eta-\eta_0\|^2_{\infty}} \right \rbrace.\label{eq:hoeff2}
\end{align}
Now,
\begin{align}
	& \sum_{n=1}^{\infty}\int_{s^{c}} P\left(\left| \dfrac{1}{n}  \sum_{i=1}^{n} \left\lbrace y_i\log\left(\dfrac{ p(x_i)}{p_{0}(x_i)}\right) - E_{\mathbf{X}} \left( p_{0}(\mathbf{X})\log \left\lbrace \dfrac{p(\mathbf{X})}{p_{0}(\mathbf{X})}\right\rbrace \right)\right\rbrace \right|> \dfrac{\kappa_1}{2} \right) \ d \pi(p)\notag\\
	&\leq\sum_{n=1}^{\infty}\int_{\mathcal G_n}4\exp\left\lbrace  -\dfrac{n\kappa^2_1}{8C^2L^2\|\eta-\eta_0\|^2_{\infty}} \right \rbrace d\pi(\eta)
	+\sum_{n=1}^{\infty}\pi\left(\mathcal G^c_n\right),\label{eq:sum1}
\end{align}
and
\begin{align}
	& \sum_{n=1}^{\infty}\int_{s^{c}} P\left(\left| \dfrac{1}{n} \sum_{i=1}^{n} \left\lbrace (1-y_i)\log\left(\dfrac{1-p(x_i)}{1-p_{0}(x_i)}\right) - 
	E_{\mathbf{X}} \left( (1-p_{0}(\mathbf{X}))\log \left\lbrace \dfrac{1-p(\mathbf{X})}{1-p_{0}(\mathbf{X})}\right\rbrace \right)\right\rbrace \right|> \dfrac{\kappa_1}{2} \right) \ d \pi(p)\notag\\
	&\leq\sum_{n=1}^{\infty}\int_{\mathcal G_n}4\exp\left\lbrace  -\dfrac{n\kappa^2_1}{8C^2L^2\|\eta-\eta_0\|^2_{\infty}} \right \rbrace d\pi(\eta)
	+\sum_{n=1}^{\infty}\pi\left(\mathcal G^c_n\right).\label{eq:sum2}
\end{align}
Then proceeding in the same way as (S-2.25) -- (S-2.30) of \ctn{Chatterjee18a}, and noting that $\sum_{n=1}^{\infty}\pi\left(\mathcal G^c_n\right)<\infty$,
we obtain (\ref{eq:finite1}). 

Hence (S6) holds.

\begin{rmk}
	\label{rmk:hoeff_binary}
	It is important to clarify the role of Assumption \ref{AA6} here. Note that, we need a lower bound for $\log\left(\frac{p(x)}{p_0(x)}\right)$.
	For instance, if $H(\eta(x))=\frac{\exp\left(\eta(x)\right)}{1+\exp\left(\eta(x)\right)}$,
	then even if $\|\eta\|_{\infty}\leq\sqrt{\beta n}$ on $\mathcal G_n$, it holds that $\log\left(\frac{p(x)}{p_0(x)}\right)\geq C-\sqrt{\beta n}$
	for all $x\in\mathfrak X$, for all $\eta\in\mathcal G_n$, for some constant $C$. In our bounding method uing the inequality $\log x\geq 1-1/x$ for $x>0$,
	we have $\log\left(\frac{p(x)}{p_0(x)}\right)\geq -\frac{\|p-p_0\|_{\infty}}{p(x)}\geq -2\exp\left(\sqrt{\beta n}\right)\|p-p_0\|_{\infty}$.
	It would then follow that the exponent of the Hoeffding inequality
	is $O(1)$. This would fail to ensure summability of the corresponding terms involving $V_i$. Thus, we need to ensure that $p(x)$ is bounded away
	from $0$. Similarly, the infinite sum associated with $W_i$ would not 
	be finite unless $1-p(x)$ is bounded away from $0$.
\end{rmk}

% \dfrac{1}{n}  \sum_{i=1}^{n} \left\lbrace(1-y_i)\log\left(\dfrac{ 1- p(x_i)}{1-p_{0}(x_i)}\right)-E_{\mathbf{X}}\left((1-p_{0}(\mathbf{X})) \log \left\lbrace\dfrac{ \left(1- p(\mathbf{X})\right)}{\left(1- p_{0}(\mathbf{X})\right)} \right\rbrace\right) \right \rbrace

\subsection{Verification of (S7)for Binary Regression }
\label{subsec:S7}
This verification  follows from the fact that $h(p)$ is continuous. Indeed, 
for any set $A$ with  $\pi(A)> 0$, $\mathcal{G}_n \cap A \uparrow A$. It follows from continuity of $h$ that
$h\left(\mathcal{G}_n \cap A\right) \downarrow  h (A)$ as $n \rightarrow \infty$ and hence (S7) holds.

%%%%%%%%%%%%%%%%%%%%%%%%%%%%%%%%%%

%%%%%%%%%%%%%%%%%%%%%%%%%%%%%%%%%%%%%%%%%%%

%%%%%%%%%%%%%%%%%%%%%%%%%%%%%%%%%%%%%%%%%%%%%%%%%%%%%
\newpage
\section{ Verification of (S1) to (S7) for Poisson regression}\label{PShaverify}

\subsection{Verification of (S1) for Poisson regression}
\label{subsec:PS1}

Observe that
\begin{align*}
& f_{\lambda}(\mathbf{Y_n}|\bX_n)=\prod_{i=1}^{n} f(y_i|x_i) = \prod_{i=1}^{n} \exp\left(-\lambda(x_i)\right)\dfrac{(\lambda(x_i))^{y_i}}{y_{i}!} ,  \\
& f_{\lambda_0}(\mathbf{Y_n}|\bX_n)=\prod_{i=1}^{n} f_{0}(y_i|x_i) = \prod_{i=1}^{n} \exp\left(-\lambda_{o}(x_i)\right)\dfrac{(\lambda_{0}(x_i))^{y_i}}{y_{i}!}. 
\end{align*}

Therefore,
\begin{equation}\label{PS11}
	R_{n}(\lambda)= \exp\left(-\sum_{i=1}^{n} [\lambda(x_i)-\lambda_{0}(x_i)]\right)\prod_{i=1}^{n}\left(\dfrac{\lambda(x_i)}{\lambda_{0}(x_i)}\right)^{y_i}
\end{equation}

and,
\begin{equation}
\frac{1}{n}\log R_{n}(\lambda)= \left(-\frac{1}{n}\sum_{i=1}^{n} [\lambda(x_i)-\lambda_{0}(x_i)]\right)+\frac{1}{n}\sum_{i=1}^{n}{y_i}\log\left(\dfrac{\lambda(x_i)}{\lambda_{0}(x_i)}\right).
	\label{eq:logR}
\end{equation}

Note that for any $a\in\Re$, $\left\{(y_i,\eta):y_i\log\left(\dfrac{\lambda(x_i)}{\lambda_{0}(x_i)}\right)<a\right\}
=\bigcup_{r=1}^{\infty}\left\{\eta:r\log\left(\dfrac{\lambda(x_i)}{\lambda_{0}(x_i)}\right)<a\right\}$.
Let $\tilde\eta_j$; $j=1,2,\ldots$ be such that $\|\eta_j-\eta\|_{\infty}\rightarrow 0$, as $j\rightarrow\infty$. Then, letting $\tilde\lambda_j(x)=H(\tilde\eta_j(x))$,
for all $x\in\mathfrak X$, it follows, since $0<C_1\leq\lambda(x)\leq C_2<\infty$ on $\mathfrak X$, that there exists $j_0\geq 1$ such that for $j\geq j_0$, 
$0<C_1\leq\tilde\lambda_j(x)\leq C_2<\infty$.
Hence, using the inequalities $1-\frac{1}{x}\leq\log x\leq x-1$ for $x>0$, we obtain $\left|\log\left(\frac{\tilde\lambda_j(x_i)}{\lambda(x_i)}\right)\right|
\leq C\|\tilde\lambda_j-\lambda\|_{\infty}$, for some $C>0$, for $j\geq j_0\geq 1$.
It follows that
\begin{align*}
	\left|r\log\left(\dfrac{\tilde\lambda_j(x_i)}{\lambda_{0}(x_i)}\right)-r\left(\dfrac{\lambda(x_i)}{\tilde\lambda_{0}(x_i)}\right)\right|
	=r\left|\log\left(\dfrac{\tilde\lambda_j(x_i)}{\lambda(x_i)}\right)\right|
	\leq  rC\|\tilde\lambda_j-\lambda\|_{\infty}
	%\leq rCL\|\tilde\eta_j-\eta\|_{\infty}
	\rightarrow 0,
\end{align*}
%where $L$ is the Lipschitz constant associated with $H$. 
in the same way as in the binary regression, using Taylor's series expansion up to the first order.
Hence, $r\log\left(\dfrac{\lambda(x_i)}{\lambda_{0}(x_i)}\right)$ is continuous in $\eta$, ensuring measurability of
$\left\{\eta:r\log\left(\dfrac{\lambda(x_i)}{\lambda_{0}(x_i)}\right)<a\right\}$, and hence of 
$\left\{(y_i,\eta):y_i\log\left(\dfrac{\lambda(x_i)}{\lambda_{0}(x_i)}\right)<a\right\}$.
It follows that $\frac{1}{n}\sum_{i=1}^{n}{y_i}\log\left(\dfrac{\lambda(x_i)}{\lambda_{0}(x_i)}\right)$ is measurable.

Also, continuity of $\lambda(x_i)-\lambda_{0}(x_i)$ with respect to $\eta$ ensures measurability of $-\frac{1}{n}\sum_{i=1}^{n} [\lambda(x_i)-\lambda_{0}(x_i)]$.
Thus, $\frac{1}{n}\log R_{n}(\lambda)$, and hence $R_n(\lambda)$, is measurable.

\subsection{Verification of (S2) for Poisson regression }
\label{subsec:PS2}
For every $\lambda \in \Lambda$, we need to show that the KL divergence rate

\begin{equation*}
h(\lambda)=\underset{n\rightarrow\infty}{\lim}~\frac{1}{n}E_{\lambda _0}\left[\log\left\{\frac{f_{\lambda _0}(\bY_n|\bX_n)}{f_{\lambda }(\bY_n|\bX_n)}\right\}\right] =\underset{n\rightarrow\infty}{\lim}~\frac{1}{n}E_{\lambda _0}\left[-\log\left\{R_{n}(\lambda )\right\}\right].
%\label{eq:A222}
\end{equation*}
exists (possibly being infinite) and is $\mathcal T$-measurable.

Now,
\begin{equation*}
\frac{1}{n}\log R_{n}(\lambda)= \left(-\frac{1}{n}\sum_{i=1}^{n} [\lambda(x_i)-\lambda_{0}(x_i)]\right)+\frac{1}{n}\sum_{i=1}^{n}{y_i} \log \left(\dfrac{\lambda(x_i)}{\lambda_{0}(x_i)}\right)
\end{equation*}
Therefore,

\begin{align*}\frac{1}{n}E_{\lambda_0}\left[-\log\left\{R_{n}(\lambda)\right\}\right]=  \left(\frac{1}{n}\sum_{i=1}^{n} [\lambda(x_i)-\lambda_{0}(x_i)]\right)+\frac{1}{n}\sum_{i=1}^{n}{\lambda_{0}(x_i)} \log \left(\dfrac{\lambda_{0}(x_i)}{\lambda(x_i)}\right).
\end{align*}
%\underset{n\rightarrow\infty}{\lim}~
\begin{align}\underset{n\rightarrow\infty}{\lim}~\frac{1}{n}E_{\lambda_0}\left[-\log\left\{R_{n}(\lambda)\right\}\right]= & \underset{n\rightarrow\infty}{\lim}~\left(\frac{1}{n}\sum_{i=1}^{n} [\lambda(x_i)-\lambda_{0}(x_i)]\right)+ \underset{n\rightarrow\infty}{\lim}~\frac{1}{n}\sum_{i=1}^{n}{\lambda_{0}(x_i)} \log \left(\dfrac{\lambda_{0}(x_i)}{\lambda(x_i)}\right)\notag\\
%= & E_{\mathbf{X}}\left\lbrace[\lambda(\mathbf{X})-\lambda_{0}(\mathbf{X})]+ {\lambda_{0}(\mathbf{X})} \log \left(\dfrac{\lambda_{0}(\mathbf{X})}{\lambda(\mathbf{X})}\right)\right\rbrace\notag\\
= & E_{\mathbf{X}}\left[\lambda(\mathbf{X})-\lambda_{0}(\mathbf{X})\right]+ E_{\mathbf{X}}\left[{\lambda_{0}(\mathbf{X})} \log \left(\dfrac{\lambda_{0}(\mathbf{X})}{\lambda(\mathbf{X})}\right)\right]. \notag%\label{PA2221}
\end{align}

The last line holds due to Assumption \ref{AA4} and SLLN. Here $E_{\mathbf{X}} (\cdot)=\int_{\mathfrak{X}} \cdot \ dQ$.
In other words,
\begin{equation}
h(\lambda)=E_{\mathbf{X}}\left[\lambda(\mathbf{X})-\lambda_{0}(\mathbf{X})\right]+ E_{\mathbf{X}}\left[{\lambda_{0}(\mathbf{X})} \log \left(\dfrac{\lambda_{0}(\mathbf{X})}{\lambda(\mathbf{X})}\right)\right]. \label{Ph_p}
\end{equation}

\subsection{Verification of (S3) for Poisson regression }
\label{subsec:PS3}
Here we need to verify the asymptotic equipartition property, that is, almost surely with respect to the true model $P$,
\begin{equation}
\underset{n\rightarrow\infty}{\lim}~\frac{1}{n}\log \left[R_n(\lambda)\right]=-h(\lambda)=\underset{n\rightarrow\infty}{\lim}~
	\frac{1}{n}E\left[\log\left\{\frac{f_{\lambda}(\bY_n|\bX_n)}{f_{\lambda0}(\bY_n|\bX_n)}\right\}\right].
\label{Peq:equipartition22}
\end{equation}

%Now,
%
%\begin{equation}
%\frac{1}{n}\log R_{n}(\lambda)= \left(-\frac{1}{n}\sum_{i=1}^{n} [\lambda(x_i)-\lambda_{0}(x_i)]\right)+\frac{1}{n}\sum_{i=1}^{n}{y_i} \log \left(\dfrac{\lambda(x_i)}{\lambda_{0}(x_i)}\right).
%\end{equation}
%By rearranging the terms we get,
Now,
\begin{equation*}
-\frac{1}{n}\log R_{n}(\lambda)= \frac{1}{n}\sum_{i=1}^{n}\left\lbrace  \left [\lambda(x_i)-\lambda_{0}(x_i)\right]+y_i \log \left(\dfrac{\lambda_{0}(x_i)}{\lambda(x_i)}\right)\right\rbrace.
\end{equation*}
%From Lemma \ref{PAAA7} and Lemma \ref{PPlemma3}  the following holds:
As before, for given $\lambda$, there exists $C>0$ such that $\left|\log\left(\frac{\lambda_0(x_i)}{\lambda(x_i)}\right)\right|\leq C\|\lambda-\lambda_0\|_{\infty}$. Hence,
\begin{align}
& \displaystyle \sum_{i=1}^{\infty} i^{-2} Var \left[ \left\lbrace  \left [\lambda(x_i)-\lambda_{0}(x_i)\right]+y_i \log \left(\dfrac{\lambda_{0}(x_i)}{\lambda(x_i)}\right)\right\rbrace \right] \notag\\
&= \displaystyle \sum_{i=1}^{\infty} i^{-2}\lambda_{0}(x_i) \left[ \log \left(\dfrac{\lambda_{0}(x_i)}{\lambda(x_i)}\right)\right]^{2}\notag\\
	&\leq C^2\|H(\kappa_0)\|\left(\|\lambda-\lambda_0\|_{\infty}\right)^2
	\displaystyle\sum_{i=1}^{\infty}i^{-2}\notag\\
	&<\infty.
\end{align}
Observe that $y_i$ are  observations from independent random variables. 
Hence from Kolmogorov’s SLLN for independent random variables and from Assumption \ref{AA4}, \eqref{Peq:equipartition22} holds as $n \rightarrow \infty$.

\subsection{Verification of (S4) for Poisson regression }
\label{subsec:PS4}

If $I=\{\lambda: \ h(\lambda)=\infty\}$ then we need to show $\Pi(I)<1$. 
But this holds in almost the same way as for binary regression.
In other words, (S4) holds for Poisson regression.
%Equivalently, enough to show $\Pi(I^c)>0$ where $I^c= I=\{\lambda: \ h(\lambda)<\infty \}$.  Observe that, from Lemma \ref {Plemma1} and also from Assumption \ref{AA7} we find that $h(\lambda) < \infty $ with prior probability 1. Hence (S4) holds trivially. 

\subsection{Verification of (S5) for Poisson regression }
\label{subsec:PS5}
%In our model, the parameter space is $\Lambda =\left \lbrace  \lambda(\cdot): \lambda(x)=H\left( \eta(x)\right), \eta \in \mathcal{C'}(\mathfrak{X})  \right \rbrace$. 
%We need to show that, 
%there exists a sequence of sets $\mathbb G_n\rightarrow\Lambda$ as $n\rightarrow\infty$ 
%such that: %along with $\pi(\mathcal G_T)>0$

The parameter space here remains the same as in the binary regression case, that is, $\Theta=\mathcal{C'}(\mathfrak{X})$. 
We also consider the same sequence $\mathcal G_n$ as in binary regression.
We need to verify that
\begin{enumerate}
	\item $h\left(\mathcal G_n\right)\rightarrow h\left(\Lambda\right)$, as $n\rightarrow\infty$;
	\item The inequality $\pi\left(\mathcal G_n\right)\geq 1-\alpha\exp\left(-\beta n\right)$ holds for some $\alpha>0, \beta> 2h(\Lambda)$; 
	
	\item The convergence in (S3) is uniform over $\mathcal G_n\setminus I$.
\end{enumerate}

%Similar to the sieve set we have taken in the binary regression setup, here we shall work with the following:
%sieve set $\mathbb{G}_n$ stated in \eqref{Psieveset} to construct increasing compact sets as credible regions of the prior distribution.

%\begin{equation}\label{Psieveset}
%\begin{aligned}
%	\mathbb{G}_n={} & \{  \lambda(\cdot) : \ \ \lambda(x)=H(\eta(x)), \eta\in\mathcal{C'}(\mathfrak{X}),\|\eta\| \leq \exp(\left(\beta n\right)^{1/4}),  \\
%	&  \|\eta_{j}'\| \leq \exp(\left(\beta n\right)^{1/4}); j=1, 2, \ldots, d\}.
%\end{aligned}
%\end{equation}
%\begin{equation}
%\mathcal{G}_n=\left \lbrace    p(\cdot) : \ \ p(x)=H(\eta(x)), \|\eta\| \leq \exp(\sqrt{\beta n}), \  \|\eta_{j}'\| \leq \exp(\sqrt{\beta n}); j=1, 2, \ldots, d\right \rbrace
%\end{equation}

% Let sequences λn and τn be such that τ (τ < τn) = e−cn and λ(λ > λn) = e−cn, for some constant c. Specific forms of the hyper-priors and the sequences will be discussed later

%\begin{equation}\label{sieveM}
%\mathbb{M}_n=\left \lbrace    \eta(\cdot) : \  \|\eta\| \leq \exp(\sqrt{\beta n}), \  \|\eta_{j}'\| \leq \exp(\sqrt{\beta n}); j=1, 2, \ldots, d\right \rbrace
%\end{equation}

%Then $\mathbb{M}_n \rightarrow \mathcal{C}'(\mathfrak{X})$ as $n \rightarrow \infty$ (\citet{s}).

\subsubsection{ Verification of (S5) (1)}
\label {subsubsec:PS51}
We now need to verify that $h\left(\mathcal G_n\right)\rightarrow h\left(\Lambda\right)$ as $n\rightarrow\infty$. 
But this holds in the same way as for binary regression.
%We proceed same way as done in subsubsection \ref{subsubsec:S51} of Appendix \ref{Shaverify}.Indeed,  for our purpose we need show that $h(\lambda)$ is continuous in $\lambda$ which has been proved in  Lemma \ref{Plemmamain}. Hence, continuity of $h(\lambda)$, compactness of $\mathbb{G}_n$ along with its non-decreasing
%nature with respect to $n$ implies that $h\left(\mathbb G_n\right)\rightarrow h\left(\Lambda\right)$, as $n\rightarrow\infty$.

\subsubsection{ Verification of (S5) (2)}
\label {subsubsec:PS52}
Again, this holds in the same way as for binary regression.
%
%The sieveset $\mathcal{G}_n$ is taken as in equation \ref{Psieveset}. Now, by the same argument as in Appendix \ref{subsubsec:S52} %of Appendix \ref{Shaverify}  
%we obtain
%
%\begin{align*}
%\pi(\mathbb{G}_{n}) &=\pi\left(\|\eta\| \leq \exp(\sqrt{\beta n}),  
%\right)\\
%& \geq 1- \left(c_{\eta} + \sum_{j=1}^{d} c_{\eta_{j}'} \right) \exp(-\beta n). 
%\end{align*}

%The last line \ref{s1111} follows   and Assumption \ref{AA8}. 

\subsubsection{ Verification of (S5) (3)}
\label {subsubsec:PS53}

Using the same arguments as in the binary regression case, here we only need
to show that $\dfrac{1}{n} \log( R_n(\lambda)) $ and $h(\lambda)$ are both Lipschitz. 
 
 %\begin{equation}
% (-)\frac{1}{n}\log R_{n}(\lambda)= \frac{1}{n}\sum_{i=1}^{n}\left\lbrace  \left [\lambda(x_i)-\lambda_{0}(x_i)\right]+y_i \log \left(\dfrac{\lambda_{0}(x_i)}{\lambda(x_i)}\right)\right\rbrace
%\end{equation}
Recall that
\begin{align*} \frac{1}{n}  \log R_{n}(\lambda)= \frac{1}{n}\sum_{i=1}^{n}\left\lbrace  \left [\lambda_{0}(x_i)-\lambda(x_i)\right]+y_i \log \left(\dfrac{\lambda(x_i)}{\lambda_{0}(x_i)}\right)\right\rbrace.
\end{align*}
For any $\eta_1,\eta_2\in\mathcal G$, there exists $C>0$ such that $\left|\log\left(\frac{\lambda_1(x)}{\lambda_2(x)}\right)\right|\leq C\|\lambda_1-\lambda_2\|_{\infty}$,
for all $x\in\mathfrak X$, where $\lambda_1=H(\eta_1)$ and $\lambda_2=H(\eta_2)$. Hence,
\begin{align*}
&\left|\frac{1}{n} \log R_{n}(\lambda_1)-\frac{1}{n}  \log R_{n}(\lambda_2)\right|
\leq\|\lambda_1-\lambda_2\|_{\infty}\left(1+C\times\frac{1}{n}\sum_{i=1}^ny_i\right).
\end{align*}
Thus, $\frac{1}{n}\log R_{n}(\lambda)$ is almost surely Lipschitz with respect to $\lambda$.
Since, by Kolmogorov's SLLN for independent variables, 
$\frac{1}{n}\sum_{i=1}^ny_i\stackrel{a.s}{\longrightarrow} E_{\bX}\left(\lambda_0(\bX)\right)<\infty$, as $n\rightarrow\infty$, 
and since $\lambda=H(\eta)$ is Lipschitz in $\eta\in\mathcal G_n$ in the same way as in binary regression, the desired stochastic equicontinuity follows. 
Lipschitz continuity of $h(\lambda)$ in $\mathcal G_n$ follows using similar techniques.

%Observe that  the last term $\left(  \dfrac{p_{0}(x_1)}{p(x_1)} +  \dfrac{(1-p_{0}(x_1))}{1-p(x_1)}  \right)$ is bounded as $p, p_0$ as well as $1-p, 1-p_0$ 
% h(p)=\left[E_{\mathbf{X}} \left( p_{0}(\mathbf{X})\log \left\lbrace \dfrac{p_{0}(\mathbf{X})}{p(\mathbf{X})}\right\rbrace \right)+ E_{\mathbf{X}}\left((1-p_{0}(\mathbf{X})) \log \left\lbrace\dfrac{ \left(1- p_{0}(\mathbf{X})\right)}{\left(1- p(\mathbf{X})\right)} \right\rbrace\right) \right ]
\subsection{Verification of (S6)  for Poisson Regression}
\label{subsec:PS6}
%Here also we will mimic the arguments given in subsubsection \ref{subsubsec:S53} of Appendix \ref{Shaverify} with some change where necessary.
Since
\begin{align}
&\sum_{n=1}^{\infty}\int_{S^c} P\left(\left| \dfrac{1}{n} \log  R_{n}(\lambda) + h(\lambda) \right|> \kappa -h(\Lambda) \right) \ d\pi(\lambda)\notag\\ 
&\qquad\leq\sum_{n=1}^{\infty}\int_{\mathcal G_n} P\left(\left| \dfrac{1}{n} \log  R_{n}(\lambda) + h(\lambda) \right|> \kappa -h(\Lambda) \right) \ d\pi(\lambda)\notag\\
	&\qquad\quad+\sum_{n=1}^{\infty}\int_{\mathcal G^c_n} P\left(\left| \dfrac{1}{n} \log  R_{n}(\lambda) + h(\lambda) \right|> \kappa -h(\Lambda) \right) \ d\pi(\lambda)\notag\\
&\qquad\leq\sum_{n=1}^{\infty}\int_{\mathcal G_n} P\left(\left| \dfrac{1}{n} \log  R_{n}(\lambda) + h(\lambda) \right|> \kappa -h(\Lambda) \right) \ d\pi(\lambda)
	+\sum_{n=1}^{\infty}\pi\left(\mathcal G^c_n\right),\label{eq:finite2}
\end{align}
and the second term of (\ref{eq:finite2}) is finite, it is enough to show that the first term of (\ref{eq:finite2}) is finite.

Let us take $\kappa_1=\kappa -h(\Lambda)$.
Observe that for $\eta\in\mathcal G_n$, 
\begin{align}
&P\left(\left| \dfrac{1}{n} \log  R_{n}(\lambda) + h(\lambda) \right|> \kappa_1 \right)\notag\\
	&\leq P\left( \left|\frac{1}{n}\sum_{i=1}^n\left[\lambda_0(x_i)\log\left(\frac{\lambda(x_i)}{\lambda_0(x_i)}\right)-
	E_{\bX}\left(\lambda_0(\bX)\log\left(\frac{\lambda(\bX)}{\lambda_0(\bX)}\right)\right)\right]\right|>\frac{\kappa_1}{3}\right)\label{eq:term1}\\
	&\qquad+P\left(\left|\frac{1}{n}\sum_{i=1}^n\left[\left(\lambda_0(x_i)-\lambda(x_i)\right)-E_{\bX}\left(\lambda_0(\bX)-\lambda(\bX)\right)\right]\right|
	>\frac{\kappa_1}{3}\right)\label{eq:term2}\\
	&\qquad+P\left(\left|\frac{1}{n}\sum_{i=1}^n\left[y_i\log\left(\frac{\lambda(x_i)}{\lambda_0(x_i)}\right)
	-\lambda_0(x_i)\log\left(\frac{\lambda(x_i)}{\lambda_0(x_i)}\right)\right]\right|>\frac{\kappa_1}{3}\right).\label{eq:term3}
\end{align}
Using Hoeffding's inequality and Lipschitz continuity of $H$ in $\mathcal G_n$ as in binary regression, we find that  
(\ref{eq:term1}) and (\ref{eq:term2}) are bounded above by 
$2\exp\left(-\frac{C_1n\kappa^2_1}{\|\eta-\eta_0\|^2_{\infty}}\right)$, and $\exp\left(-\frac{C_2n\kappa^2_1}{\|\eta-\eta_0\|^2_{\infty}}\right)$,
for some $C_1>0$ and $C_2>0$. These bounds hold even if the covariates are non-random.

To bound (\ref{eq:term3}), we shall first show that the summands are sub-exponential, and then shall apply Bernstein's inequality (see, for example,
\ctn{Uspensky37}, \ctn{Bennett62}, \ctn{Massart03}). 
Direct calculation yields
\begin{align}
&E\left[\exp\left\{t\left(y_i\log\left(\frac{\lambda(x_i)}{\lambda_0(x_i)}\right)-\lambda_0(x_i)\log\left(\frac{\lambda(x_i)}{\lambda_0(x_i)}\right)\right)\right\}\right]\notag\\
	&=\exp\left[-t\lambda_0(x_i)\log\left(\frac{\lambda(x_i)}{\lambda_0(x_i)}\right)\right]\times
	\exp\left[\lambda_0(x_i)\left\{\exp\left(t\log\left(\frac{\lambda(x_i)}{\lambda_0(x_i)}\right)\right)-1\right\}\right].
	\label{eq:subexp1}
\end{align}
The first factor of (\ref{eq:subexp1}) has the following upper bound:
\begin{equation}
	\exp\left[-t\lambda_0(x_i)\log\left(\frac{\lambda(x_i)}{\lambda_0(x_i)}\right)\right]
	\leq \exp\left(c_{\lambda}\|\lambda\|_{\infty}|t|\right).
	\label{eq:subexp2}
\end{equation}
A bound for the second factor of (\ref{eq:subexp1}) is given as follows: 
\begin{align}
&\exp\left[\lambda_0(x_i)\left\{\exp\left(t\log\left(\frac{\lambda(x_i)}{\lambda_0(x_i)}\right)\right)-1\right\}\right]\notag\\
&\qquad	\leq\exp\left[\|\lambda_0\|_{\infty}\left(\exp\left(\frac{t\|\lambda-\lambda_0\|_{\infty}}{\kappa_P}\right)-1\right)\right]\notag\\
	&\qquad\leq\exp\left[\|\lambda_0\|_{\infty}\left(c_{\lambda}|t|+c^2_{\lambda}t^2\right)\right],
	\label{eq:subexp3}
\end{align}
for $|t|\leq c^{-1}_{\lambda}$, where $c_{\lambda}=C\|\lambda-\lambda_0\|_{\infty}$, for some $C>0$.

Combining (\ref{eq:subexp1}), (\ref{eq:subexp2}) and (\ref{eq:subexp3}) we see that (\ref{eq:subexp1}) is bounded above by $\exp\left(c^2_{\lambda}t^2\right)$
provided that 
\begin{equation}
c_{\lambda}|t|\geq 2/\left(\|\lambda_0\|^{-1}_{\infty}-1\right)\geq 2/\left(\kappa^{-1}_P-1\right).
\label{eq:subexp4}
\end{equation}
The rightmost bound of (\ref{eq:subexp4}) is close to zero if $\kappa_P$ is chosen sufficiently small.
Now consider the function $g(t)=\exp\left(c^2_{\lambda}t^2\right)-f(t)$, where $f(t)$ is given by (\ref{eq:subexp1}). Since $g(t)$ is continuous in $t$ and
$g(0)=0$ and $g(t)>0$ on $2/\left(\kappa^{-1}_P-1\right)\leq |t|\leq c^{-1}_{\lambda}$, it follows that on the sufficiently small interval 
$0\leq |t|\leq 2/\left(\kappa^{-1}_P-1\right)$, $g(t)>0$. In other words, (\ref{eq:subexp1}) is bounded above by $\exp\left(c^2_{\lambda}t^2\right)$
for $0\leq |t|\leq c^{-1}_{\lambda}$. Thus, 
$z_i=y_i\log\left(\frac{\lambda(x_i)}{\lambda_0(x_i)}\right)-\lambda_0(x_i)\log\left(\frac{\lambda(x_i)}{\lambda_0(x_i)}\right)$ are independent
sub-exponential variables with parameter $c_{\lambda}$.

Bernstein's inequality, in conjunction with Lipschitz continuity of $H$ on $\mathcal G_n$ then ensures that (\ref{eq:term3}) is bounded above by 
$2\exp\left[-\frac{n}{2}\min\left\{\frac{C_1\kappa^2_1}{\|\eta-\eta_0\|^2_{\infty}},\frac{C_2\kappa_1}{\|\eta-\eta_0\|_{\infty}}\right\}\right]$, for positive
constants $C_1$ and $C_2$.

The rest of the proof of finiteness of (\ref{eq:finite2}) follows in the same (indeed, simpler) way as \ctn{Chatterjee18a}.
Hence (S6) holds.

% \dfrac{1}{n}  \sum_{i=1}^{n} \left\lbrace(1-y_i)\log\left(\dfrac{ 1- p(x_i)}{1-p_{0}(x_i)}\right)-E_{\mathbf{X}}\left((1-p_{0}(\mathbf{X})) \log \left\lbrace\dfrac{ \left(1- p(\mathbf{X})\right)}{\left(1- p_{0}(\mathbf{X})\right)} \right\rbrace\right) \right \rbrace

\begin{rmk}
\label{rmk:hoeff_poisson}
Arguments similar to that of Remark \ref{rmk:hoeff_binary} shows that it is essential to have $\lambda$ bounded away from zero. 
\end{rmk}

\subsection{Verification of (S7) for Poisson regression}
\label{subsec:PS7}
This verification follows from the fact that $h(\lambda)$ is continuous, similar to binary regression.

%\begin{supplement}
%\sname{Supplement A}\label{suppA}
%\stitle{Title of the Supplement A}
%\slink[url]{http://www.e-publications.org/ims/support/dowload/imsart-ims.zip}
%\url{http://www.e-publications.org/ims/support/dowload/imsart-ims.zip}
%\sdescription{Dum esset rex in
%accubitu suo, nardus mea dedit odorem suavitatis. Quoniam confortavit
%seras portarum tuarum, benedixit filiis tuis in te. Qui posuit fines tuos}
%\end{supplement}

\bibliography{irmcmc}

\end{document}